\subjclass[2010]{Primary: 37D20; Secondary: 37C70}
\keywords{Surface diffeomorphism, Homoclinic class, Axiom A}
\thanks{Partially supported by CNPq, FAPERJ and PRONEX/DS from Brazil.}
\newcommand{\cl}{\operatorname{Cl}}
\newcommand{\inte}{\operatorname{Int}}
\newcommand{\diff}{\operatorname{Diff}}
\newcommand{\emb}{\operatorname{Emb}}
\newcommand{\sink}{\operatorname{Sink}}
\newcommand{\sou}{\operatorname{Source}}
\newcommand{\de}{\operatorname{Deg}}
\newcommand{\sad}{\operatorname{Saddle}}
\newcommand{\ta}{\operatorname{Tang}}
\newtheorem{theorem}{Theorem}[section] 
\newtheorem{lemma}[theorem]{Lemma}     
\newtheorem{corollary}[theorem]{Corollary}
\newtheorem{proposition}[theorem]{Proposition}
\newtheorem{defi}[theorem]{Definition}
\title[Some properties of surface diffeomorphisms]
 {Some properties of surface diffeomorphisms} 
\author{A. Arbieto, C. A. Morales}
\address{Instituto de Matem\'atica, Universidade Federal do Rio de Janeiro, P. O. Box 68530, 21945-970 Rio
de Janeiro, Brazil.}
\email{arbieto@im.ufrj.br, morales@impa.br}
\begin{document}
\maketitle

\begin{abstract}
We obtain some properties of $C^1$ generic surface diffeomorphisms as
finiteness of {\em non-trivial} attractors, approximation by diffeomorphisms with only a finite number
of {\em hyperbolic} homoclinic classes, equivalence between
essential hyperbolicity and the hyperbolicity of all {\em dissipative} homoclinic classes (and
the finiteness of spiral sinks).
In particular, we obtain the equivalence between finiteness of sinks and finiteness of spiral sinks,
abscence of domination in the set of accumulation points of the sinks, and
the equivalence between Axiom A and the hyperbolicity of all homoclinic classes.
These results improve \cite{A}, \cite{a}, \cite{m} and
settle a conjecture by Abdenur, Bonatti, Crovisier and D\'{i}az \cite{abcd}.
\end{abstract}

\section{Introduction} 
\label{intro}

\noindent
One of the most important open problems about dynamical systems of surfaces is Smale's Conjecture (\cite{smale} p.779),
that asserts the denseness of the diffeomorphisms satisfying Axiom A
among surface diffeomorphisms.
If true, then some relevant properties of Axiom A like
essential hyperbolicity,
finiteness of sinks, non-trivial attractors and homoclinic classes
must be true in the $C^1$ generic world too.

The main aim of this paper is to make some contributions for positive solution of the Smale's Conjecture.
Indeed, we first prove that the number of {\em non-trivial} attractors of a $C^1$ generic surface diffeomorphism is finite.
Also that every surface diffeomorphism can be $C^1$ approximated by ones for which
number of {\em hyperbolic} homoclinic classes is finite.
Yet, a $C^1$ generic surface diffeomorphism is essentially hyperbolic if and only if
its dissipative homoclinic classes are all hyperbolic.
In particular,
a $C^1$ generic orientation-preserving surface diffeomorphism is essentially hyperbolic if and only
the number of {\em spiral} sinks is finite.
In addition, the number of sinks of a $C^1$ generic orientation-preserving surface diffeomorphism is finite if and only
if the corresponding number for spiral sinks is finite too. Moreover,
the set of accumulation points of the sinks of a $C^1$ generic surface diffeomorphism
cannot have any dominated splitting.
Finally, a $C^1$ generic surface diffeomorphism whose homoclinic classes are all hyperbolic satisfies Axiom A.
Our results improve the Araujo's thesis \cite{A}, results by Asaoka\cite{a}, the second author \cite{m} and
solve Conjecture 1 in \cite{abcd} p.130.

\section{Statement of the results}

\noindent
By {\em surface diffeomorphism} we mean a diffeomorphism of class $C^1$ of a compact connected boundaryless two-dimensional Riemannian manifold $M$.
The corresponding space equipped with the $C^1$ topology will be denoted by $\diff^1(M)$.
A subset of $\diff^1(M)$ is {\em residual} if it is a countable intersection of open and dense subsets.
We say that a {\em $C^1$ generic surface diffeomorphism satisfies a certain property P} if
there is a residual subset $\mathcal{R}$ of $\diff^1(M)$ such that P holds for every element of $\mathcal{R}$.
A surface diffeomorphism is {\em orientation-preserving}
if $M$ is orientable and $f$ preserves a given orientation.
The closure operation is denoted by $\cl(\cdot)$.

Given a surface diffeomorphism $f$ and a point $x$ we define the {\em omega-limit set},
$$
\omega(x)=\left\{y\in M : y=\lim_{k\to\infty}f^{n_k}(x)\mbox{ for some integer sequence }n_k\to\infty\right\}.
$$
(when necessary we shall write $\omega_f(x)$ to indicate the dependence on $f$.)
We say that a subset $\Lambda\subset M$ is
{\em invariant} if $f(\Lambda)=\Lambda$;
{\em transitive} if there is $x\in\Lambda$ such that
$\Lambda=\omega(x)$; and {\em non-trivial} if it does not reduces to a periodic orbit.

An {\em attractor} is a transitive set $A$ exhibiting a neighborhood $U$ such that
$$
A=\displaystyle\bigcap_{n\in\mathbb{N}}f^n(U).
$$
With these definitions we can state our first result.

\begin{theorem}
\label{attractor}
The number of {\em non-trivial} attractors of a $C^1$ generic surface diffeomorphism is finite.
\end{theorem}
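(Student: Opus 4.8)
The plan is to exploit the fact that a non-trivial attractor of a surface diffeomorphism carries a lot of structure in the $C^1$ generic setting, ultimately forcing it to be a homoclinic class, and then to use a Baire/semicontinuity argument together with known generic results to bound the number of such classes. First I would recall that, for a $C^1$ generic $f$, every attractor $A$ is Lyapunov stable and its basin $W^s(A)$ is open and dense in its own closure; moreover by Carballo--Morales--Pacifico-type arguments (or by Crovisier's connecting lemma for pseudo-orbits) a transitive attractor that is not reduced to a periodic orbit must contain a periodic point and in fact coincide with the homoclinic class of that periodic point. The point of passing to homoclinic classes is that homoclinic classes of a $C^1$ generic diffeomorphism are the chain-recurrence classes of their periodic points, hence pairwise disjoint, and they vary lower-semicontinuously in a strong sense.

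Next I would set up the counting. Fix a sequence $U_n$ of open sets forming a basis for the topology of $M$, and for each finite collection of indices consider the (open) set of diffeomorphisms having at least $k$ distinct non-trivial attractors whose basins contain prescribed disjoint basic open sets. If the conclusion failed, a generic $f$ would have infinitely many pairwise disjoint non-trivial attractors $A_1, A_2, \dots$; since $M$ is compact, infinitely many of them, together with their (Lyapunov stable) basins, would accumulate somewhere, and the $C^1$ generic Lyapunov stability would then produce a single chain class absorbing infinitely many of the $A_i$ — contradicting that distinct homoclinic classes of a generic map are disjoint chain classes. The clean way to package this is: the map $f \mapsto (\text{number of non-trivial attractors of } f)$ is, on a residual set, lower semicontinuous and locally bounded, hence locally constant on a residual set, hence finite on a residual set; the local bound comes from the fact that each non-trivial attractor, being a non-trivial homoclinic class, contains a hyperbolic periodic point whose index is $1$ (in dimension two a non-trivial transitive attracting set cannot be a sink), and such a periodic point robustly persists with a transverse homoclinic intersection, so nearby diffeomorphisms inherit at least that many non-trivial attractors in a uniformly separated way.

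The main obstacle I anticipate is the first step: showing that a non-trivial attractor of a $C^1$ generic surface diffeomorphism genuinely is a (non-trivial) homoclinic class, and in particular that it contains periodic points. Transitivity alone does not give periodic points, so one must invoke a connecting-lemma-for-pseudo-orbits argument (in the spirit of Bonatti--Crovisier and Crovisier's work, as used in \cite{abcd}) to show that on a residual set every attractor is the homoclinic class of some periodic orbit contained in it, and that this homoclinic class is itself an attractor. A secondary technical point is controlling the passage from "infinitely many attractors" to "accumulation + merging": one needs that Lyapunov stable sets of a generic map that come $C^0$-close must actually lie in a common chain class, which again is a known $C^1$ generic fact but must be cited carefully. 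Once these two ingredients are in place, the semicontinuity-plus-Baire argument is routine and yields the finiteness.
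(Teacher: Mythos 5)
Your counting scheme has a genuine gap at its central step: the ``local bound''. The persistence argument you invoke (a non-trivial attractor contains a saddle with a robust transverse homoclinic intersection) only shows that the number of non-trivial attractors does not \emph{drop} under perturbation, i.e.\ lower semicontinuity of the count; it gives no upper bound whatsoever. Asserting that the count is ``lower semicontinuous and locally bounded, hence locally constant on a residual set'' assumes exactly what the theorem claims, since local boundedness of the number of non-trivial attractors \emph{is} the finiteness statement. The auxiliary merging argument does not rescue this: it is simply not true that Lyapunov stable sets (or quasi-attractors, or homoclinic classes) of a generic diffeomorphism which are Hausdorff-close must lie in a common chain class. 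Infinitely many pairwise disjoint Lyapunov stable invariant sets can accumulate on a limit set while remaining in distinct chain classes (think of a sequence of distinct attracting orbits accumulating somewhere); disjointness of homoclinic classes is never contradicted by mere accumulation, so no contradiction is produced. Your first step (generically, a non-trivial attractor is a homoclinic class) is fine and is also used in the paper (via neutrality of homoclinic classes, Lemma \ref{homoneutral} and Corollary \ref{prove-attractor}), but by itself it does not bound their number: the whole difficulty is to exclude infinitely many non-trivial homoclinic classes that are attractors.

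The paper's proof supplies precisely the quantitative mechanism that your soft Baire/semicontinuity argument lacks, and it is irreducibly two-dimensional. Generically, every non-trivial attractor is \emph{dissipative} and lies in $\cl(\sad_d(f))\setminus\cl(\sink(f))$ (Corollary \ref{prove-attractor}), hence away from the degenerate set $\de(f)$ (Corollary \ref{ladilla1}); by Proposition \ref{ladilla2} the union of these attractors then carries a dominated splitting, and by the generic result of \cite{abcd} each attractor with a dominated splitting is hyperbolic. Finiteness then comes from Theorem \ref{finiteness-attractors}: if infinitely many dissipative non-trivial hyperbolic attractors sat inside one compact set with a dominated splitting, the dissipativity forces uniformly contracted Pliss times along the $E$-direction of the periodic orbits, producing (Lemma \ref{stable-manifold}) a local stable curve $W_\epsilon(p)$ at a limit point $p$, which the unstable curves of infinitely many distinct attractors must inevitably cross; this puts $\omega_{f^N}(p)$ in two disjoint attractors, a contradiction. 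If you want to repair your write-up, you would need to replace the semicontinuity/merging step by an argument of this kind (dominated splitting plus Pliss-type contraction and the inevitable-intersection geometry); no purely generic chain-class bookkeeping will yield the upper bound.
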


A compact invariant set $\Lambda$ of $f$ is {\em hyperbolic} if
there are a continuous invariant tangent bundle decomposition
$T_\Lambda M=E_\Lambda^s\oplus E_\Lambda^s$ over $\Lambda$
and positive numbers $K,\lambda$ such that
$$
\|Df^n(x)/E_x\|\leq Ke^{-\lambda n}
\quad\mbox{ and } \quad
\|Df^{-n}(x)/F_{f^n(x)}\|\leq K^{-1}e^{\lambda n}
\quad\forall x\in \Lambda,\forall n\in\mathbb{N}.
$$

Given $f\in\diff^1(M)$ we say that
$x\in M$ is {\em periodic}
if $f^n(x)=x$ for some positive integer $n$. The minimum of such integers is the so-called {\em period} denoted by $n_x$ (or $n_{x,f}$ to emphasize $f$).
The {\em eigenvalues} of a periodic point $x$ will be those of the linear isomorphism
$Df^{n_x}(x):T_xM\to T_xM$.

A periodic point $x$ is a {\em saddle} if
it has eigenvalues of modulus less than and bigger than $1$.
The invariant manifold theory \cite{hps} asserts that  for every saddle $x$
there are invariant manifolds tangent at $x$ to the eigenspaces associated to the
eigenvalues of modulus less and bigger than one respectively.
A {\em homoclinic point} associated to $x$ is a point 
where such manifolds meet. A {\em transverse homoclinic point} is one where these manifolds
meet transversally.
We denote by $H_f(x)$ the closure of the transverse homoclinic points associated to $x$. 
A {\em homoclinic class} is a subset equals to $H_f(x)$ for some saddle $x$.

\begin{theorem}
\label{thAA'}
Every surface diffeomorphism can be $C^1$ approximated by ones for which the
number of {\em hyperbolic} homoclinic classes is finite.
\end{theorem}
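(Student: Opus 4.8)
The plan is to argue by density directly. Fix a surface diffeomorphism $f$ and a $C^1$ neighbourhood $\mathcal{U}$ of it; we must find $g\in\mathcal{U}$ with finitely many hyperbolic homoclinic classes. After a first perturbation we may assume that $f$ already lies in the residual sets coming from the Kupka--Smale theorem, from Bonatti--Crovisier (every homoclinic class coincides with a chain recurrence class, so two homoclinic classes are equal or disjoint, each is an isolated chain-transitive set, and there is no cycle between distinct ones), from the Pujals--Sambarino theory of dominated splittings on surfaces (every hyperbolic homoclinic class carries a dominated splitting and is a locally maximal hyperbolic basic set), and from Theorem \ref{attractor} applied to $f$ and to $f^{-1}$ (only finitely many non-trivial attractors and finitely many non-trivial repellers). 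If $f$ already has finitely many hyperbolic homoclinic classes we take $g=f$; so assume it has infinitely many. Discarding the finitely many non-trivial attractors and repellers, and noting that a non-trivial hyperbolic homoclinic class which is not an attractor (resp.\ not a repeller) has, by the $\lambda$-lemma and the no-cycle property, an unstable (resp.\ stable) separatrix whose closure leaves the class, we obtain an infinite family $\{\Lambda_n\}_{n\in\mathbb{N}}$ of pairwise distinct hyperbolic homoclinic classes, each of which is a periodic saddle orbit or a saddle-type basic set with escaping separatrices.

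Let $K$ be the set of accumulation points of $\bigcup_n\Lambda_n$, a non-empty compact invariant subset of the (closed) chain recurrent set; since $K$ carries the full accumulation, for every $\varepsilon>0$ all but finitely many $\Lambda_n$ lie in the $\varepsilon$-neighbourhood of $K$. I would split into two cases according to whether $K$ admits a dominated splitting. If it does, then by the Pujals--Sambarino structure theorem $K$ is hyperbolic apart from finitely many normally hyperbolic periodic circles carrying Morse--Smale dynamics; its hyperbolic part, being chain recurrent, is a finite union of basic sets by Smale's spectral decomposition, so $K$ is a finite union of locally maximal pieces, hence itself locally maximal. For $n$ large $\Lambda_n$ lies in an arbitrarily small neighbourhood of $K$, hence inside $K$; but a homoclinic class contained in $K$ must be one of those finitely many basic sets (its periodic points being homoclinically related to theirs) or a periodic orbit lying in them. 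Thus the $\Lambda_n$ would be finite, contradicting the choice of the family.

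It remains to treat the case where $K$ admits no dominated splitting, which I expect to be the main obstacle. The idea is to exploit the absence of domination: by a Franks-type perturbation supported in a fixed small neighbourhood $V$ of $K$ — a small rotation of the derivative along orbits crossing $V$, in the spirit of Newhouse's construction — one obtains $g\in\mathcal{U}$ for which no hyperbolic homoclinic class meets $V$ (such a class would furnish a dominated splitting near $K$, which the perturbation is designed to robustly destroy), while the finitely many hyperbolic homoclinic classes of $f$ disjoint from $V$ persist. Since all but finitely many $\Lambda_n$ lie in $V$, this $g$ has only finitely many hyperbolic homoclinic classes, which is what we wanted. The delicate points are to make the perturbation act \emph{simultaneously} on the infinitely many $\Lambda_n$ approaching $K$, to ensure it creates no new hyperbolic homoclinic class inside $V$ (i.e.\ that the non-dominated, hence non-hyperbolic, behaviour can be made $C^1$-robust there), and to control the separatrices entering and leaving $V$ so that no class outside $V$ is disturbed. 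These estimates are what I expect to occupy the bulk of the argument, the remainder being an assembly of the cited generic mechanisms together with Theorem \ref{attractor}.
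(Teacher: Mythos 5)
Your proposal has a genuine gap, and in fact two. First, the smoothness issue: the Pujals--Sambarino structure theorem (Theorem B of \cite{PS}), which you invoke both for the claim that hyperbolic homoclinic classes of $f$ are nicely structured and, crucially, in your Case 1 to conclude that the accumulation set $K$ with a dominated splitting is hyperbolic up to finitely many circles, is a $C^2$ theorem; a $C^1$-generic diffeomorphism cannot be assumed $C^2$ (the $C^2$ maps form a dense but not residual subset, so they need not meet your residual set). The paper avoids this by never applying \cite{PS} to a generic $f$: it proves that the closure of the union of the hyperbolic \emph{dissipative} classes of $f^{\pm1}$ has a dominated splitting on a residual set (via Lemma \ref{cm}, Corollary \ref{ladilla1} and Proposition \ref{ladilla2}: such classes avoid $\cl(\sink(f))\cup\cl(\sou(f))$, the degenerate points lie inside those closures, and there is domination on $\sad_d^*\setminus\de$), transfers this to an \emph{open and dense} set $\mathcal{O}$ by semicontinuity of $f\mapsto\cl(H(f^{\pm1}))$ plus robustness of dominated splittings, and only then applies Theorem B to $C^2$ Kupka--Smale diffeomorphisms inside $\mathcal{O}$ — which is exactly why the theorem is stated as a density statement.

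Second, and more seriously, your Case 2 (no domination on $K$) is not a proof: the perturbation scheme is only sketched and its essential steps are the ones you yourself flag as open. There is no mechanism there for the conclusion you need. A hyperbolic homoclinic class contained in $V$ carries a dominated splitting only over \emph{itself}, not over $K$ or over $V$, so a perturbation ``designed to robustly destroy domination near $K$'' does not preclude small hyperbolic classes (e.g.\ horseshoes) appearing inside $V$; nor is it clear how to keep the classes outside $V$ untouched, since their orbits and invariant manifolds may cross $V$, nor how a single Franks-type rotation along orbits crossing $V$ is to handle infinitely many $\Lambda_n$ simultaneously while producing a diffeomorphism that still lies in $\mathcal{U}$. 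The paper's argument shows this whole case never has to be confronted: for the set that matters (the closure of the hyperbolic dissipative classes) domination is proved to hold automatically, so no ``non-dominated accumulation'' perturbation argument is needed. Also note a smaller inaccuracy in your reduction: discarding only the non-trivial attractors and repellers of $f$ and $f^{-1}$ via Theorem \ref{attractor} does not by itself feed into any later step, whereas the paper's reduction that each class be dissipative for $g$ or $g^{-1}$ (arranged on the dense set $\mathcal{D}$) is what allows the two hyperbolic sets $\cl(H(g))$ and $\cl(H(g^{-1}))$ to capture \emph{all} hyperbolic classes and yield finiteness.
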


A {\em hyperbolic attractor} is an attractor which is simultaneously a hyperbolic set.

The {\em basin} of any subset $\Lambda\subset M$ is defined by
$$
W^s(\Lambda)=\{y\in M : \omega(y)\subset \Lambda\}.
$$
(Sometimes we write $W^s_f(\Lambda)$ to indicate the dependence on $f$).

We say that $f$ is {\em essentially hyperbolic} if
it exhibits finitely many hyperbolic attractors whose basins form an open and dense subset of $M$.

We call a homoclinic class $H$ of a surface diffeomorphism $f$
{\em dissipative} if $H=H_f(x)$ for some $x\in\sad_d(f)$.

We shall prove that, $C^1$ generically, the hyperbolicity of the dissipative homoclinic classes
is equivalent to essentially hyperbolicity.

\begin{theorem}
\label{hyp->ess}
A $C^1$ generic surface diffeomorphism is essentially hyperbolic if and only if
its dissipative homoclinic classes are all hyperbolic.
\end{theorem}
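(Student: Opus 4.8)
The plan is to prove each implication for $f$ in a residual set $\mathcal{R}\subset\diff^1(M)$, obtained by intersecting several classical generic properties with the one furnished by Theorem \ref{attractor}. The generic facts I shall use are: (G1) the set $\{x\in M:\omega(x)\text{ is Lyapunov stable}\}$ is residual in $M$; (G2) a Lyapunov stable $\omega$-limit set is either a periodic sink or a non-trivial homoclinic class $H_f(p)$ (it contains a periodic orbit and coincides with its homoclinic class); (G3) for every saddle $p$ one has $H_f(p)=C_f(p)$, the chain recurrence class of $p$, and $H_f(p)$ is hyperbolic as soon as it admits a dominated splitting (Kupka--Smale genericity plus the Pujals--Sambarino analysis of dominated splittings on surfaces); and (G4) the accumulation set of any infinite family of sinks of $f$ carries no dominated splitting, while $f$ has only finitely many non-trivial attractors by Theorem \ref{attractor}. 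I also use that a $C^1$-robustly transitive surface diffeomorphism is Anosov (Ma\~{n}\'{e}), to dispose of the case where the class under study is all of $M$.

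I would carry out the implication ``dissipative homoclinic classes all hyperbolic $\Rightarrow$ essentially hyperbolic'' first, as it is the constructive one. Let $D=\{x\in M:\omega(x)\text{ is Lyapunov stable}\}$, residual in $M$ by (G1), and fix $x\in D$. By (G2), either $\omega(x)$ is a sink, or $\omega(x)=H$ is a non-trivial homoclinic class which is Lyapunov stable. In the latter case $H$ is \emph{dissipative}: Lyapunov stability of $H$ prevents volume from expanding near $H$, so some $f$-invariant measure $\mu$ on $H$ has $\int\log|\det Df|\,d\mu\le 0$, generically strictly negative, whence by the density of periodic measures in $H$ some periodic orbit inside $H$ lies in $\sad_d(f)$. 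By hypothesis $H$ is then hyperbolic; a hyperbolic Lyapunov stable set is a topological attractor, so $H$ is a hyperbolic non-trivial attractor, and by (G4) there are only finitely many of these, say $A_1,\dots,A_k$. Next, $f$ has only finitely many sinks: otherwise, by (G4), the sinks accumulate on a set with no dominated splitting, and unwinding this yields a dissipative homoclinic class with no dominated splitting, contradicting the hypothesis through (G3). Hence $D$ is contained in the \emph{finite} union of $W^s(A_1),\dots,W^s(A_k)$ and of the basins of the finitely many sinks; being a finite union of basins of hyperbolic attractors it is open, and containing the residual set $D$ it is dense. Therefore $f$ is essentially hyperbolic.

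For the converse, let $A_1,\dots,A_k$ be the hyperbolic attractors of the essentially hyperbolic $f$. These are its only quasi-attractors: a Lyapunov stable chain class meeting some $A_i$ must equal $A_i$, while one disjoint from $\bigcup_iA_i$ would---by Lyapunov stability---carry a small forward-invariant neighborhood meeting one of the dense basins $W^s(A_i)$, which is absurd. Now take $x\in\sad_d(f)$ and set $H=H_f(x)$. If $x\in\bigcup_iA_i$, then by (G3) $H$ is the chain class of $x$, namely some $A_i$, hence hyperbolic. If $x\notin\bigcup_iA_i$, then $H\subset M\setminus\bigcup_iW^s(A_i)$, a closed, invariant, nowhere dense set, and it suffices to produce a dominated splitting on $H$ and invoke (G3). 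This is the point where dissipativeness is needed: a dissipative homoclinic class lacking a dominated splitting can be $C^1$-perturbed so as to carry a homoclinic tangency and then, by a Newhouse-type construction, to contain arbitrarily many sinks; but along $\mathcal{R}$ the quasi-attractors of a diffeomorphism are precisely its finitely many hyperbolic attractors, so such a perturbation would create many new quasi-attractors clustering away from $A_1,\dots,A_k$, contradicting the stability of this finite list. Hence $H$ admits a dominated splitting and is hyperbolic.

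The main obstacle lies in the two places where one passes from \emph{dissipativeness} of a homoclinic class to a \emph{dominated splitting} on it. In the first implication one must turn ``no domination on the accumulation set of an infinite family of sinks'' into an honest non-hyperbolic \emph{dissipative homoclinic class of the given $f$}; the delicate point is that non-hyperbolicity produced by a $C^1$ perturbation need not persist, so one must instead locate---$C^1$-close, and then genuinely present via connecting-lemma and Kupka--Smale arguments along $\mathcal{R}$---a dissipative saddle $q$ whose homoclinic class robustly fails domination. In the converse one must rule out non-dominated dissipative classes using only essential hyperbolicity, i.e.\ control the quasi-attractors created when such a class is perturbed. The remaining ingredients---the Lyapunov-stability reductions (G1)--(G2), the dissipativeness of a Lyapunov stable homoclinic class, the openness of a finite union of hyperbolic basins, and the appeals to Theorem \ref{attractor} and to Ma\~{n}\'{e}'s theorem---should be routine given the quoted results.
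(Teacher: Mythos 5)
Your outline has the right global shape (reduce essential hyperbolicity to: finitely many sinks, hyperbolic attractors capture generic $\omega$-limit sets), but the two steps you yourself flag as ``the main obstacle'' are precisely the mathematical content of this theorem, and they are not supplied. In the forward direction, your finiteness-of-sinks step rests on ``unwinding'' the absence of domination on $\cl(\sink(f))\setminus\sink(f)$ into a non-dominated \emph{dissipative homoclinic class of $f$ itself}; no mechanism for this is given, and it is exactly what the paper's degenerate-point machinery is built to replace: generically $\de(f)\subset\cl(\sink(f))\setminus\sink(f)$ (Corollary \ref{ladilla1}, via Proposition \ref{ladi1} turning degenerate points into dissipative tangencies and Lemma \ref{lema0}'s non-explosion of sinks), hyperbolic dissipative classes avoid $\cl(\sink(f))$ (Lemma \ref{cm}, neutrality), there is domination on $\sad_d^*(f)\setminus\de(f)$ (Proposition \ref{ladilla2}) and on $\cl(\sink(f))\setminus\sink(f)\subset\sad_d^*(f)$ (Lemma \ref{l9}), which forces $\de(f)=\emptyset$ by Proposition \ref{arrasa}, and then Theorem \ref{peo} gives hyperbolicity of $\sad_d^*(f)$ before the quasi-attracting $\omega$-limit argument (Lemma \ref{move-attractor} and the spectral decomposition) is run. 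Your (G3) is likewise not a quotable generic fact in the form you use it: Pujals--Sambarino requires $C^2$ and yields hyperbolicity only up to normally hyperbolic irrational circles, so ``dominated homoclinic class of a generic $f$ is hyperbolic'' itself needs the kind of open-dense approximation argument the paper performs in Theorem \ref{peo}. Your dissipativeness claim for a Lyapunov stable class also needs repair: the measure argument gives $\int\log|\det Df|\,d\mu\le 0$, and producing a genuine dissipative saddle of $f$ (not of a perturbation) requires the Ergodic Closing Lemma, Franks' Lemma and semicontinuity of $\cl(\sad_d(\cdot))$, as in Lemma \ref{move-attractor}.

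The converse direction as written does not work. From essential hyperbolicity of $f$ you argue that perturbing a non-dominated dissipative class would create sinks/quasi-attractors ``contradicting the stability of this finite list'', but neither essential hyperbolicity nor membership in your residual set is robust, so a nearby diffeomorphism with many new sinks contradicts nothing about $f$; what is needed is a semicontinuity statement for $f$ itself (non-explosion of sinks, Lemma \ref{lema0}) together with localizing the created sinks near the class, i.e.\ again the $\de(f)\subset\cl(\sink(f))$ mechanism. In the paper this direction is essentially free: essential hyperbolicity forces $\sink(f)$ to be finite, and the implication ``$\sink(f)$ finite $\Rightarrow$ all dissipative classes hyperbolic'' is part of the chain of equivalences in Theorem \ref{ara-theo}, of which the present theorem is a direct corollary.
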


A periodic point $x$ is a {\em sink} (resp. {\em source})
if all its eigenvalues have modulus less (reps. bigger) than $1$. If, additionally, such eigenvalues are
not real, then we say that $x$ is a {\em spiral sink} (resp. {\em spiral source}).

\begin{theorem}
\label{araujo}
A $C^1$ generic orientation-preserving surface diffeomorphism $f$ is essentially hyperbolic if and only
the number of spiral sinks of $f$ is finite.
\end{theorem}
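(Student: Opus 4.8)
We shall derive Theorem~\ref{araujo} from Theorem~\ref{hyp->ess} by showing that, $C^1$-generically among orientation-preserving surface diffeomorphisms, finiteness of the set of spiral sinks is equivalent to hyperbolicity of every dissipative homoclinic class. One implication needs no genericity. Suppose $f$ is essentially hyperbolic, with hyperbolic attractors $A_1,\dots,A_k$ whose basins cover an open and dense subset of $M$. The orbit of any sink $s$ is itself a hyperbolic attractor, and its basin $W^s$ is open and nonempty, hence meets some $W^s(A_i)$; picking $y$ in the intersection we get $\omega(y)\subset A_i$, while $\omega(y)$ equals the orbit of $s$ because $s$ is a sink, and, $A_i$ being transitive and meeting the open basin of that orbit, one concludes that $A_i$ equals the orbit of $s$. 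Thus $f$ has at most $k$ sinks, in particular finitely many spiral sinks.

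For the converse, fix a residual subset $\mathcal R\subset\diff^1(M)$ contained in the one furnished by Theorem~\ref{hyp->ess} and on which, in addition, (i) every homoclinic class is either hyperbolic or carries no dominated splitting --- the $C^1$-generic Pujals--Sambarino-type dichotomy for homoclinic classes used in \cite{m} and \cite{abcd}; and (ii) the map $N\colon\diff^1(M)\to\mathbb N\cup\{\infty\}$ counting spiral sinks, which is lower semicontinuous because a spiral sink is a hyperbolic periodic point whose non-real eigenvalue pair persists with the same nature under $C^1$-small perturbations, is locally constant near $f$ whenever $N(f)<\infty$ (true on the residual set of continuity points of a lower semicontinuous integer-valued function). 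Let $f\in\mathcal R$ be orientation-preserving with $N(f)<\infty$. By Theorem~\ref{hyp->ess}, to conclude that $f$ is essentially hyperbolic it is enough to prove that no dissipative homoclinic class of $f$ is non-hyperbolic.

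Assume for a contradiction that $H=H_f(x)$ with $x\in\sad_d(f)$ fails to be hyperbolic. By (i), $H$ admits no dominated splitting, so, by the standard $C^1$-perturbation mechanism, $H$ contains periodic orbits along which the derivative has arbitrarily weak domination, and Franks' lemma (in the refined form developed by Ma\~n\'e and exploited in \cite{abcd}) turns such an orbit, through an arbitrarily $C^1$-small perturbation supported near it, into a periodic point $q$ whose multiplier has a pair of non-real eigenvalues. Since $H$ is dissipative and the perturbation barely changes the Jacobian along the orbit, the modulus of this eigenvalue pair can be kept less than one, so $q$ is a \emph{spiral} sink of the perturbed map $g$. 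Here orientation-preservation is crucial: it forces $\det Dg^{n_q}(q)>0$, which is precisely what permits the created eigenvalues to be non-real. One may take $g$ arbitrarily close to $f$ while retaining all spiral sinks of $f$ and adding $q$, so $N(g)>N(f)$, contradicting (ii). Hence every dissipative homoclinic class of $f$ is hyperbolic, and Theorem~\ref{hyp->ess} yields essential hyperbolicity.

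The crux is the perturbation step of the last paragraph: extracting, from the mere absence of a dominated splitting, a genuine contracting periodic point with non-real eigenvalues, by a $C^1$-small perturbation, with enough control on the Jacobian to rule out that one only produces sources. This rests on the $C^1$-perturbation toolkit --- Franks' lemma together with the analysis of linear cocycles lacking domination --- and on the preliminary $C^1$-generic reduction of non-hyperbolicity of the class to absence of a dominated splitting; once those are in place, the Baire-category bookkeeping that upgrades ``one extra spiral sink after a perturbation'' to ``infinitely many spiral sinks for $f$'' is routine.
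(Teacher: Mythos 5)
Your forward direction (essential hyperbolicity forces finitely many sinks, hence finitely many spiral sinks) is fine and is also how the paper treats that implication. The converse, however, rests on two steps that are genuine gaps. First, the ``generic dichotomy'' (i) --- every homoclinic class of a $C^1$-generic surface diffeomorphism is either hyperbolic or has no dominated splitting --- is not available in \cite{m} or \cite{abcd} in the form you invoke: what \cite{abcd} provides generically is hyperbolicity of dominated \emph{attractors} (this is exactly what the present paper cites as Theorem 2 of \cite{abcd}), not of arbitrary dissipative homoclinic classes. Upgrading the $C^2$ Pujals--Sambarino theorem to a fixed $C^1$-generic diffeomorphism is precisely the delicate semicontinuity/open-dense bookkeeping that this paper performs, and it does so \emph{without} ever proving a per-class dichotomy: it works globally with $\sad_d^*(f)$, degenerate points, Propositions \ref{ladilla2} and \ref{ladi1}, Theorem \ref{peo} and Theorem \ref{ara-theo}. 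So assuming (i) as a known citation amounts to assuming a large part of what has to be proved.

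Second, even granting absence of domination on $H$, your perturbation step does not deliver a \emph{spiral sink}. Lack of domination on $H$ gives, for every $N$, periodic orbits homoclinically related to $x$ along which $N$-domination fails, and a Franks/Ma\~n\'e-type rotation argument can merge their eigenvalues into a non-real pair of modulus $|\det|^{1/2}$ per period; but nothing guarantees those orbits are dissipative. ``Dissipative class'' only means $H$ contains \emph{one} dissipative saddle; the non-dominated orbits you perturb may well have Jacobian of modulus $\geq 1$ at the period, in which case you create spiral sources and your count $N(f)$ of spiral sinks is untouched. Controlling the Jacobian of the orbits being perturbed is exactly why the paper carries dissipativity through every stage ($\sad_d$, $\sad_d^*$, degenerate points defined via dissipative saddles with small angles, and dissipative homoclinic tangencies unfolded via \cite{m0} into spiral sinks in the orientation-preserving case). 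For comparison, the paper's actual proof of Theorem \ref{araujo} is short because the heavy lifting is elsewhere: finiteness of $\sink_\mathbb{C}(f)$ gives $\cl(\sink_\mathbb{C}(f))\setminus\sink_\mathbb{C}(f)=\emptyset$, Corollary \ref{ladilla1} (which needs $f\in\mathcal{R}_0$ of Lemma \ref{lema0}, Propositions \ref{ladi1}--\ref{ladilla2} and \cite{m0}) then forces $\de(f)=\emptyset$, and Theorem \ref{ara-theo} ((3)$\Rightarrow$(4)) yields essential hyperbolicity. Your semicontinuity bookkeeping in (ii) is correct but cannot substitute for these two missing ingredients.
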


Since the number of sinks of an essentially hyperbolic diffeomorphism is finite, we obtain the following corollary.

\begin{corollary}
\label{thAA}
The number of sinks of a $C^1$ generic orientation-preserving surface diffeomorphism is finite if and only
if the number of spiral sinks is finite too.
\end{corollary}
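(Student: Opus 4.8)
The plan is to deduce this immediately from Theorem \ref{araujo}. One direction is trivial: since spiral sinks are in particular sinks, finiteness of sinks forces finiteness of spiral sinks for \emph{every} diffeomorphism, with no genericity needed. The content is the converse. So assume finiteness of spiral sinks. By Theorem \ref{araujo}, applied along the residual set furnished there, a $C^1$ generic orientation-preserving surface diffeomorphism $f$ with finitely many spiral sinks is essentially hyperbolic, i.e.\ it has finitely many hyperbolic attractors whose basins are open and dense. The remaining point, which I flagged in the statement's paragraph, is that an essentially hyperbolic diffeomorphism has only finitely many sinks at all --- not just finitely many hyperbolic (spiral or not) ones.

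To see this, first note a sink $p$ is itself a hyperbolic attractor: its eigenvalues have modulus $<1$, so the orbit of $p$ is a hyperbolic set, and $p$ is an attractor in the sense defined above (a small neighborhood of the orbit is trapped into the orbit). Hence each sink of $f$ is one of the finitely many hyperbolic attractors of the essentially hyperbolic map $f$, and two distinct sinks give two distinct attractors (they are disjoint transitive sets). Therefore the number of sinks is bounded by the number of hyperbolic attractors, which is finite. Combining: for $f$ in the relevant residual set, finiteness of spiral sinks $\Rightarrow$ essential hyperbolicity $\Rightarrow$ finiteness of sinks; and finiteness of sinks $\Rightarrow$ finiteness of spiral sinks always. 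This proves the equivalence for $C^1$ generic orientation-preserving surface diffeomorphisms.

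The only mild subtlety --- the ``main obstacle'' such as it is --- is bookkeeping of residual sets: Theorem \ref{araujo} provides a residual $\mathcal{R}$ on which ``essentially hyperbolic $\iff$ finitely many spiral sinks'' holds, and the corollary's statement must be read relative to that same $\mathcal{R}$ (intersected, if one wishes, with the trivial observation above, which needs no genericity). Since a finite intersection of residual sets is residual, there is no difficulty, and the corollary follows.
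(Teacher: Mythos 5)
Your proposal is correct and follows essentially the same route as the paper, which deduces the corollary directly from Theorem \ref{araujo} together with the remark that an essentially hyperbolic diffeomorphism has only finitely many sinks (the paper states this fact without proof, while you sketch it; to be fully rigorous one should note that a sink not belonging to the given finite family of hyperbolic attractors would have an open basin disjoint from the dense union of their basins, which is impossible). The trivial direction and the residual-set bookkeeping are handled as in the paper.
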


Another corollary is given as follows.

An {\em non-empty} invariant set $\Lambda$
has a {\em dominated splitting} with respect to $f$
if there are a continuous invariant tangent bundle decomposition
$T_\Lambda M=E_\Lambda\oplus F_\Lambda$ over $\Lambda$ with $E_x\neq 0$ and $F_x\neq 0$ for all $x\in \Lambda$
and positive numbers $K,0<\lambda<1$ such that
$$
\|Df^n(x)/E_x\|\cdot\|Df^{-n}(f^n(x))/F_{f^{-n}(x)}\|\leq K\lambda^{-n},
\quad\quad\forall x\in \Lambda,\forall n\in\mathbb{N}.
$$
(Notice that this definition is symmetric, i.e., $\Lambda$ has a dominated splitting
for $f$ if and only if it does for $f^{-1}$. By this reason we just say
that $\Lambda$ has a dominated splitting without explicit mention for $f$ or $f^{-1}$).
The set of sinks of a surface diffeomorphism $f$ will be denoted by $\sink(f)$.
We also denote by $\sou(f)$ the set of sources of $f$.

The following improves results
by Asaoka \cite{A} and the second author \cite{m}.

\begin{corollary}
\label{c1}
If $f$ is a $C^1$ generic surface diffeomorphism,
then $\cl(\sink(f))\setminus \sink(f)$ cannot have a dominated splitting(\footnote{During the prepararation of this work, we learn that Crovisier,
Pujals and Sambarino announced a similar result in higher dimension when the dominated subbundle is one-dimensional.}).
\end{corollary}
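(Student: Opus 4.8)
The plan is to derive the corollary quickly from Corollary \ref{thAA} together with one elementary observation: a spiral sink carries no $Df$-invariant one-dimensional subbundle, because the derivative along its orbit has non-real eigenvalues and hence preserves no line. Write $\Lambda=\cl(\sink(f))\setminus\sink(f)$ and argue by contradiction, assuming that $\Lambda$ has a dominated splitting; in particular $\Lambda\neq\emptyset$, so $\sink(f)$ is infinite. After a standard reduction --- using that $\sink(f)=\sink(f^{2})$, that a subset of a surface admits a dominated splitting for $f$ if and only if it does for $f^{2}$, and that lifting to the orientation double cover is a local diffeomorphism --- it is enough to treat an orientation-preserving $C^{1}$ generic $f$; the one point deserving attention is that the residual set supplied by Corollary \ref{thAA} pulls back under these operations.

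The dynamical core proceeds in three steps. First, since a $C^{1}$ generic $f$ is Kupka--Smale, every sink is a hyperbolic periodic point, hence has an attracting neighborhood containing no other periodic point; this forces each sink to be isolated in $\cl(\sink(f))$, so that $\sink(f)$ is open in $\cl(\sink(f))$, the set $\Lambda$ is compact and invariant, and every accumulation point of $\sink(f)$ lies in $\Lambda$. Second, by the standard robustness of dominated splittings, the splitting over $\Lambda$ extends to a dominated splitting over the maximal invariant set $\Lambda_{U}=\bigcap_{n\in\mathbb Z}f^{n}(U)$ of some open neighborhood $U$ of $\Lambda$, both subbundles being one-dimensional since $\dim M=2$. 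Third, because every accumulation point of $\sink(f)$ lies in $\Lambda\subset U$, the set $\sink(f)\setminus U$ has no accumulation point and is therefore finite; consequently all but finitely many orbits of sinks of $f$ lie entirely in $U$, hence in $\Lambda_{U}$, and such a sink, lying in a set carrying an invariant one-dimensional subbundle, has a real eigenvalue and thus only real eigenvalues. Hence $f$ has only finitely many spiral sinks.

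Finally, Corollary \ref{thAA} upgrades "finitely many spiral sinks" to "finitely many sinks", whence $\sink(f)$ is closed and $\Lambda=\emptyset$, contradicting the standing assumption that $\Lambda$ carries a dominated splitting (a notion our definition reserves for non-empty sets). I expect the dynamical part above to be short and essentially routine once Corollary \ref{thAA} is in hand; the steps most in need of care are the bookkeeping reduction to the orientation-preserving case and the correct invocation of the extension theorem for dominated splittings, and I regard the former as the main (if modest) obstacle.
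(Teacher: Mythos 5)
Your dynamical core is fine as far as it goes: for a Kupka--Smale diffeomorphism the sinks are isolated in $\cl(\sink(f))$, so $\Lambda=\cl(\sink(f))\setminus\sink(f)$ is compact and invariant; a dominated splitting on $\Lambda$ extends to the maximal invariant set of a neighborhood; all but finitely many sink orbits are trapped in that set and, carrying an invariant line field, cannot be spiral; and for an orientation-preserving generic $f$ Corollary \ref{thAA} then bounds $\sink(f)$, giving the contradiction. In the orientation-preserving setting this is a legitimate alternative to the argument in the paper, which instead localizes the degenerate points, $\de(f)\subset\cl(\sink(f))\setminus\sink(f)$ (Corollary \ref{ladilla1}), uses Proposition \ref{arrasa} to force $\de(f)=\emptyset$, and then invokes the implication (3) $\Rightarrow$ (1) of Theorem \ref{ara-theo} to get finiteness of $\sink(f)$ directly.

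The genuine gap is exactly the point you downplay as a bookkeeping obstacle: the reduction to the orientation-preserving case does not work, and the corollary is asserted for \emph{all} $C^1$ generic surface diffeomorphisms (non-orientable $M$ and orientation-reversing $f$ included). Corollary \ref{thAA} rests on Theorem \ref{araujo}, which in turn needs the orientation-preserving version of Corollary \ref{ladilla1} (creation of spiral sinks from dissipative tangencies via \cite{m0}); it only provides a residual subset of the orientation-preserving diffeomorphisms. Neither of your two transfers recovers it: (i) the residual set of Corollary \ref{thAA} on the orientation double cover $\tilde M$ lives in $\diff^1(\tilde M)$, while lifts of diffeomorphisms of $M$ form a closed, nowhere dense (deck-equivariant) subset of $\diff^1(\tilde M)$ that a residual set may miss entirely, so genericity of $f$ on $M$ gives no control on $\tilde f$; (ii) for the squaring trick, $f\mapsto f^2$ is continuous but not open and its image is not dense, and the preimage of a residual set under it need not be residual, so "$f$ generic'' does not put $f^2$ in the residual set of Corollary \ref{thAA}. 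As written, your proof therefore establishes the statement only for generic orientation-preserving diffeomorphisms. To cover the general case you should bypass spiral sinks altogether, e.g. replace Corollary \ref{thAA} by the equivalence (1) $\Longleftrightarrow$ (3) of Theorem \ref{ara-theo} together with Corollary \ref{ladilla1} and Proposition \ref{arrasa}, which is precisely the orientation-free route the paper takes.
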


The {\em nonwandering set} of a surface diffeomorphism
$f$ consists of those points $p$ 
such that
for every neighborhood $U$ of $p$ there is $n\in\mathbb{N}^+$ such that
$U\cap f^n(U)\neq\emptyset$.
We say that $f$ {\em satisfies Axiom A} if its nonwandering set is both hyperbolic and
the closure of the periodic points.
It follows from the Smale's spectral decomposition theorem \cite{hk} that
every surface diffeomorphism satisfying Axiom A is essentially hyperbolic
(but not conversely).

Theorem \ref{hyp->ess} together with the so-called {\em Ma\~n\' e's dichotomy} \cite{M}
will be used to prove the aforementioned conjecture in \cite{abcd}.

\begin{corollary}
\label{thA}
A $C^1$ generic surface diffeomorphism whose homoclinic classes are all hyperbolic satisfies Axiom A.
\end{corollary}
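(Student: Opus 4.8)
The plan is to combine Theorem \ref{hyp->ess} with Mañé's dichotomy for $C^1$ generic surface diffeomorphisms. Recall that Mañé's dichotomy \cite{M} asserts that a $C^1$ generic surface diffeomorphism is either Axiom A (hence essentially hyperbolic with a hyperbolic nonwandering set) or else it exhibits a homoclinic tangency — more precisely, there is a residual subset $\mathcal{R}_0$ of $\diff^1(M)$ such that every $f\in\mathcal{R}_0$ either satisfies Axiom A or has some periodic saddle whose stable and unstable manifolds have a non-transverse intersection, and in the latter case the relevant homoclinic class fails to have a dominated splitting and therefore fails to be hyperbolic. Intersect $\mathcal{R}_0$ with the residual set given by Theorem \ref{hyp->ess} (the one on which essential hyperbolicity is equivalent to hyperbolicity of all dissipative homoclinic classes), and call the result $\mathcal{R}$; this is again residual.

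Now fix $f\in\mathcal{R}$ and suppose all homoclinic classes of $f$ are hyperbolic. First I would use the hypothesis to rule out the tangency alternative in Mañé's dichotomy: if $f$ had a homoclinic tangency associated to a saddle $p$, then $H_f(p)$ would contain this tangency and hence could not admit a dominated splitting, contradicting hyperbolicity of $H_f(p)$. (One must be slightly careful that a tangency produced by Mañé's argument genuinely sits inside a homoclinic class rather than being a stray homoclinic point; this is handled by the standard connecting-lemma perturbation reasoning already implicit in the statement of the dichotomy, or alternatively by first passing to a further perturbation and using robustness of the hyperbolic homoclinic classes — but since we only need the generic conclusion, the dichotomy as usually formulated suffices.) Hence $f$ satisfies Axiom A, as claimed.

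Alternatively — and this is the route I would actually write out, since it uses Theorem \ref{hyp->ess} directly — I would argue: all homoclinic classes hyperbolic implies in particular all dissipative homoclinic classes hyperbolic, so by Theorem \ref{hyp->ess}, $f$ is essentially hyperbolic. Essential hyperbolicity gives finitely many hyperbolic attractors $A_1,\dots,A_k$ with $W^s(A_1)\cup\cdots\cup W^s(A_k)$ open and dense. Applying the same reasoning to $f^{-1}$ (whose homoclinic classes coincide with those of $f$ and are hyperbolic) yields finitely many hyperbolic repellers $R_1,\dots,R_m$ with dense union of unstable basins. Now I would invoke Mañé's dichotomy once more, or rather the $C^1$ generic characterization of Axiom A on surfaces: a $C^1$ generic surface diffeomorphism that is both "essentially hyperbolic for $f$" and "essentially hyperbolic for $f^{-1}$" has a nonwandering set equal to the closure of its periodic points and admitting a dominated splitting off the sinks and sources; combined with the absence of tangencies inside homoclinic classes (again forced by hyperbolicity of every homoclinic class), Mañé's argument upgrades the dominated splitting to uniform hyperbolicity of the whole nonwandering set, which is Axiom A.

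The main obstacle is the last step: bridging from "essential hyperbolicity plus no homoclinic tangencies" to "Axiom A." Essential hyperbolicity controls the attractors and repellers but a priori says nothing about chain recurrence classes that are neither attractors nor repellers — these could in principle be non-hyperbolic aperiodic classes or non-hyperbolic homoclinic classes not detected by the basin condition. The hypothesis that \emph{every} homoclinic class is hyperbolic, together with the $C^1$ generic fact (from \cite{bc} / Crovisier's theory, already in the background of \cite{abcd} and Mañé's work) that on a residual set every chain recurrence class containing a periodic point is a homoclinic class and the nonwandering set is the closure of the periodic points, closes this gap: there are no aperiodic classes to worry about generically, every relevant class is a hyperbolic homoclinic class, finitely many of them by Theorem \ref{thAA'}, and their union is exactly the hyperbolic nonwandering set. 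I would make sure to cite Theorem \ref{thAA'} here to get finiteness, assemble the finitely many hyperbolic pieces, and conclude Axiom A.
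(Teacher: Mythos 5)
Your first half is exactly the paper's: all homoclinic classes hyperbolic gives, in particular, hyperbolicity of all dissipative classes of $f$ and of $f^{-1}$, and then the symmetric residual set behind Theorem \ref{hyp->ess} (really Theorem \ref{ara-theo}) yields essential hyperbolicity of $f$ and $f^{-1}$, hence finiteness of $\sink(f)$ and $\sou(f)=\sink(f^{-1})$ (cf.\ Corollary \ref{thAA}). The genuine gap is in the bridge from there to Axiom A, which you yourself flag as ``the main obstacle.'' The paper closes it with Ma\~n\'e's dichotomy in its actual form (Corollary II, p.~506 of \cite{M}): there is a residual set on which $f$ satisfies Axiom A \emph{if and only if} both $\sink(f)$ and $\sou(f)$ are finite. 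That single citation converts the finiteness you have already obtained directly into Axiom A. Your proposal never invokes this statement: your first route quotes ``Ma\~n\'e's dichotomy'' as ``Axiom A or a homoclinic tangency,'' which is not what \cite{M} proves, and in any case a $C^1$ generic $f$ is Kupka--Smale and has no actual tangency, so the intended contradiction with hyperbolicity of $H_f(p)$ would at best concern tangencies of nearby diffeomorphisms and does not follow without further (nontrivial) semicontinuity/robustness work.

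Your second route does not close the gap either. The claim that generically on surfaces ``there are no aperiodic classes to worry about'' is not in the cited literature and is essentially as hard as the problem being addressed; essential hyperbolicity of $f$ and $f^{-1}$ controls attractors and repellers but says nothing uniform about the remaining chain classes. Moreover, you appeal to Theorem \ref{thAA'} to get finiteness of the hyperbolic homoclinic classes of the given generic $f$, but Theorem \ref{thAA'} is only a denseness (approximation) statement, not a residual one, so it cannot be applied to $f$ itself. Finally, even with finitely many hyperbolic classes containing all periodic points, one still needs the nonwandering set to coincide with their union and to be uniformly hyperbolic; this is precisely the content supplied by Ma\~n\'e's Corollary II, and without it your argument does not reach Axiom A.
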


This paper is organized as follows.
In Section \ref{sec1.5} we give some general preliminaries.
In Section \ref{sec2} we define a degenerate point for a surface diffeomorphism as a point which
can be turned by small perturbation into a dissipative saddle
with small angle between their invariant subbundles along the orbit.
We shall use the techniques in \cite{PS} to prove that every degenerate point can be turned into a dissipative homoclinic tangency by small perturbations.
Therefore, $C^1$ generic surface diffeomorphisms satisfy that their
degenerate points belong to the closure of the sinks.
Furthermore, we prove that
every surface diffeomorphism has domination outside $\de(f)$.
We also provide a result closely related to the Araujo's thesis \cite{A}.
In Section \ref{sec4} we prove our results.
Some technical results will be proved in Section \ref{sec-final}.

\section{Preliminars}
\label{sec1.5}

\subsection{Non-explosion for sinks}
Recall that a diffeomorphism is {\em Kupka-Smale} if its periodic points are hyperbolic (i.e. without eigenvalues in the unit circle)
and the corresponding invariant manifolds are in general position.
Denote by $\sink_\mathbb{C}(f)$ the set of spiral sinks of a surface diffeomorphism $f$.

\begin{lemma}
\label{lema0}
There is a residual subset of surface diffeomorphisms $\mathcal{R}_0$ with the following property:
Every $f\in \mathcal{R}_0$ is Kupka-Smale, and, for every $x\notin\cl(\sink(f))$ (resp. $x\notin\cl(\sink_\mathbb{C}(f))$), there
are neighborhoods $U_x$ of $x$, $\mathcal{U}_x(f)$ of $f$ and $U$ of $\cl(\sink(f))$ (resp. $\cl(\sink_\mathbb{C}(f))$)
such that
\begin{equation}
\label{pp}
U_x\cap U=\emptyset \quad\mbox{ and }\quad
\cl(\sink(h))\subset U\mbox{ (resp. }\cl(\sink_\mathbb{C}(h))\subset U),
\quad\quad\forall h\in \mathcal{U}_x(f).
\end{equation}
\end{lemma}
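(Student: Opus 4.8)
The plan is to combine the Kupka--Smale theorem with a Baire-category argument resting on the elementary remark that sinks are never destroyed by a $C^1$-small perturbation. Indeed, with the definition adopted here a sink is a periodic point all of whose eigenvalues have modulus strictly less than one, hence a hyperbolic periodic point; it therefore admits a continuation as a hyperbolic periodic orbit for every nearby diffeomorphism, and since the eigenvalues vary continuously the continuation is again a sink. The same applies to spiral sinks, because a non-real (complex conjugate) pair of eigenvalues of a real $2\times 2$ matrix stays non-real under small perturbations. Consequently, for every open set $V\subset M$ the sets $\mathcal{H}_V=\{f\in\diff^1(M):\sink(f)\cap V\neq\emptyset\}$ and $\mathcal{H}_V^{\mathbb{C}}=\{f\in\diff^1(M):\sink_{\mathbb{C}}(f)\cap V\neq\emptyset\}$ are open. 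Fixing a countable basis $\{V_i\}_{i\in\mathbb{N}}$ of $M$, the sets $\mathcal{G}_i=\mathcal{H}_{V_i}\cup\inte(\diff^1(M)\setminus\mathcal{H}_{V_i})$ and $\mathcal{G}_i^{\mathbb{C}}=\mathcal{H}^{\mathbb{C}}_{V_i}\cup\inte(\diff^1(M)\setminus\mathcal{H}^{\mathbb{C}}_{V_i})$ are open and dense, their complements being the topological boundaries of the open sets $\mathcal{H}_{V_i}$, $\mathcal{H}^{\mathbb{C}}_{V_i}$ and hence closed with empty interior. I would then take
\[
\mathcal{R}_0=\mathcal{KS}\cap\bigcap_{i\in\mathbb{N}}\bigl(\mathcal{G}_i\cap\mathcal{G}_i^{\mathbb{C}}\bigr),
\]
where $\mathcal{KS}$ is the residual set of Kupka--Smale surface diffeomorphisms; this $\mathcal{R}_0$ is residual and consists of Kupka--Smale maps.

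To check \eqref{pp} for $f\in\mathcal{R}_0$ and $x\notin\cl(\sink(f))$, I would first choose a ball $U_x=B(x,r)$ whose closure is disjoint from the compact set $\cl(\sink(f))$, and set $U=M\setminus\overline{U_x}$, an open neighborhood of $\cl(\sink(f))$ with $U_x\cap U=\emptyset$. Since $\overline{U_x}$ is compact and contained in the open set $M\setminus\cl(\sink(f))$, it is covered by finitely many basic open sets $V_{i_1},\dots,V_{i_k}$, each disjoint from $\cl(\sink(f))$; then $\sink(f)\cap V_{i_j}=\emptyset$, so $f\notin\mathcal{H}_{V_{i_j}}$ and hence $f\in\inte(\diff^1(M)\setminus\mathcal{H}_{V_{i_j}})$ for every $j$. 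Intersecting the corresponding neighborhoods of $f$ gives a neighborhood $\mathcal{U}_x(f)$ of $f$ with $\sink(h)\cap V_{i_j}=\emptyset$ for all $j$ and all $h\in\mathcal{U}_x(f)$. Because $\bigcup_j V_{i_j}$ is open and contains $\overline{U_x}$, no limit of sinks of such an $h$ can lie in $\overline{U_x}$, so $\cl(\sink(h))\subset M\setminus\overline{U_x}=U$, which is \eqref{pp}. The spiral-sink half of the statement is obtained verbatim on replacing $\sink$, $\mathcal{H}_V$, $\mathcal{G}_i$ by $\sink_{\mathbb{C}}$, $\mathcal{H}^{\mathbb{C}}_V$, $\mathcal{G}_i^{\mathbb{C}}$ throughout.

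I do not expect a serious obstacle here: the crux is the persistence of sinks, which makes each $\mathcal{H}_{V_i}$ open and thereby lets the ``boundary of an open set is nowhere dense'' trick turn the a priori non-open property ``the sinks do not explode near $x$'' into an open and dense one. The only steps requiring a little care are the passage from the pointwise conclusion $\sink(h)\cap V=\emptyset$ to the corresponding statement about the \emph{closure} of the sinks (which is exactly why one wants the $V_i$ open), and the degenerate case $\sink(f)=\emptyset$ (or $\sink_{\mathbb{C}}(f)=\emptyset$), which is already covered above since then $r>0$ may be taken arbitrary and $U$ may even be empty. As an alternative to the explicit construction of $\mathcal{R}_0$, one could observe that $f\mapsto\cl(\sink(f))$ is lower semicontinuous by persistence of sinks and invoke Fort's theorem to get a residual set of continuity points, but this formulation needs extra bookkeeping precisely at the empty-sink diffeomorphisms.
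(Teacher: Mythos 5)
Your proof is correct and takes essentially the same route as the paper: both hinge on the fact that persistence of (spiral) sinks makes $f\mapsto\cl(\sink(f))$ lower semicontinuous, so that on a residual set of Kupka--Smale diffeomorphisms the sinks cannot explode near a point outside their closure. The paper simply cites the general theorem that a lower-semicontinuous compact-set-valued map is upper semicontinuous on a residual set, while your countable-basis construction with the sets $\mathcal{H}_{V_i}\cup\inte(\diff^1(M)\setminus\mathcal{H}_{V_i})$ is an explicit, self-contained proof of that same fact, which you yourself flag as the alternative formulation.
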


\begin{proof}
We only prove the result for $\sink(f)$ (the proof for $\sink_\mathbb{C}(f)$ is similar).
Denote by $2^M_c$ the set of compact subsets of the surface $M$ and
define
$$
S:\diff^1(M)\to 2^M_c
\quad\mbox{ by }\quad
S(f)=\cl(\sink(f)).
$$
It follows easily from the continuous dependence of the eigenvalues
of a periodic point with respect to $f$ that
this map is {\em lower-semicontinuous},
i.e.,
for every $f\in \diff^1(M)$ and every open set $W$ with $S(f)\cap W\neq\emptyset$
there is a neighborhood $\mathcal{P}$ of $f$ such that $S(g)\cap W$ for all $g\in \mathcal{P}$.

From this and well-known properties of lower-semicontinuous maps
\cite{k1}, \cite{k}, we obtain
a residual subset $\mathcal{R}_0\subset \diff^1(M)$ where
$S$ is {\em upper-semicontinuous}, i.e.,
for every $f\in \mathcal{R}_0$ and every compact subset $K$ satisfying $S(f)\cap K=\emptyset$ there is a neighborhood $\mathcal{D}$ of $f$
such that $S(g)\cap K=\emptyset$ for all $g\in\mathcal{D}$.
By the Kupka-Smale theorem \cite{hk} we can also assume that every $f\in \mathcal{R}_0$ is Kupka-Smale.

Now, take $f\in \mathcal{R}_0$.
We already have that $f$ is Kupka-Smale. If $x\notin\cl(\sink(f))$, then there are neighborhoods $U_x$
and $U$ of $x$ and $\cl(\sink(f))$ respectively such that
$U_x\cap U=\emptyset$.
Putting $K=M\setminus U$ in the definition of upper-semicontinuity we obtain a
neighborhood $\mathcal{U}_x(f)$ of $f$ such that
$\cl(\sink(h))\cap (M\setminus U)=\emptyset$ or, equivalently,
$\cl(\sink(h))\subset U$, $\forall h\in\mathcal{U}_x(f)$.
This ends the proof.
\end{proof}

\subsection{Lyapunov stability and neutral sets}
In this subsection we will introduce some results from \cite{cmp}.

Let $\Lambda$ be a compact invariant set of a surface diffeomorphism $f$.
We say that $\Lambda$ is {\em Lyapunov stable for $f$}
if for every neighborhood $U$ of $\Lambda$ there is a neighborhood $V\subset U$ of $\Lambda$ such that
$f^n(V)\subset U$, for all $n\in\mathbb{N}$.
We say that $\Lambda$ is {\em neutral} if $\Lambda=\Lambda^+\cap \Lambda^-$
where $\Lambda^\pm$ is a Lyapunov stable set for $f^{\pm1}$.

The following can be proved as in Lemma 2.2 of \cite{cmp}.

\begin{lemma}
 \label{neutral}
Let $\Lambda$ be a neutral subset of a surface diffeomorphism $f$. If $x\in M$ satisfies
$\omega(x)\cap \Lambda\neq\emptyset$, then $\omega(x)\subset \Lambda$.
If $x_k$ is a sequence of periodic point converging to some point in $\Lambda$, then
$\Lambda$ contains any Hausdorff limit of the sequences formed by the orbits of $x_k$ under $f$. 

\end{lemma}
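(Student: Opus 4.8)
The plan is to reduce both statements to the defining property of Lyapunov stability, applied once to $\Lambda^+$ under $f$ and once to $\Lambda^-$ under $f^{-1}$, together with $\Lambda=\Lambda^+\cap\Lambda^-$. First I would fix a decomposition $\Lambda=\Lambda^+\cap\Lambda^-$ with $\Lambda^\pm$ compact invariant and Lyapunov stable for $f^{\pm1}$, and recall the two elementary facts that will be used throughout: a Lyapunov stable set is closed, hence equals the intersection of its neighborhoods, and a Hausdorff limit of compact invariant sets is compact invariant. Consequently it suffices, in each case, to show that the relevant compact set lies in every neighborhood of $\Lambda^+$ and in every neighborhood of $\Lambda^-$.

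For the first assertion I would pick $p\in\omega(x)\cap\Lambda$ and a sequence $n_k\to\infty$ with $f^{n_k}(x)\to p$. Given a neighborhood $U$ of $\Lambda^+$, Lyapunov stability of $\Lambda^+$ for $f$ yields a neighborhood $V\subset U$ of $\Lambda^+$ with $f^n(V)\subset U$ for all $n\ge0$; since $p\in\Lambda^+\subset V$ we get $f^{n_k}(x)\in V$ for all large $k$, hence $f^{n_k+n}(x)\in U$ for every $n\ge0$, and as $n_k\to\infty$ this gives $f^j(x)\in U$ for all large $j$, so $\omega(x)\subset\cl(U)$; intersecting over all such $U$ gives $\omega(x)\subset\Lambda^+$. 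Symmetrically, for a neighborhood $U$ of $\Lambda^-$ I would use Lyapunov stability of $\Lambda^-$ for $f^{-1}$ to get $V\subset U$ with $f^{-n}(V)\subset U$ for all $n\ge0$; from $f^{n_k}(x)\in V$ for large $k$ it follows that $f^{n_k-n}(x)\in U$ for every $n\ge0$, i.e.\ $f^j(x)\in U$ for all $j\le n_k$, and letting $k\to\infty$ this forces $f^j(x)\in U$ for all $j$, in particular $\omega(x)\subset\cl(U)$; hence $\omega(x)\subset\Lambda^-$. Therefore $\omega(x)\subset\Lambda^+\cap\Lambda^-=\Lambda$.

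For the second assertion I would let $p\in\Lambda$ be the limit of the periodic points $x_k$ and let $L$ be a Hausdorff limit of a subsequence of the orbits $O(x_k)$, which is compact and invariant. Given a neighborhood $U$ of $\Lambda^+$, choose $V\subset U$ with $f^n(V)\subset U$ for all $n\ge0$; for large $k$ we have $x_k\in V$ (because $x_k\to p\in\Lambda^+$), so the whole forward orbit of $x_k$ lies in $U$, and since $x_k$ is periodic its entire orbit $O(x_k)$ lies in $U$. Thus $O(x_k)\subset\cl(U)$ for all large $k$, whence $L\subset\cl(U)$, and intersecting over $U$ gives $L\subset\Lambda^+$. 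Running the identical argument for $\Lambda^-$ and $f^{-1}$ (again using that a periodic orbit is backward invariant) gives $L\subset\Lambda^-$, so $L\subset\Lambda$.

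The argument is short and the only place that needs care is the $\Lambda^-$ half of the first assertion, where Lyapunov stability of $f^{-1}$ must be exploited ``backwards'' from the recurrence times $n_k$: this is precisely what turns the forward recurrence of the orbit of $x$ near the point $p\in\Lambda^-$ into the statement that the whole orbit of $x$ enters every neighborhood of $\Lambda^-$. I would also make sure to state explicitly, and briefly justify, the two topological facts mentioned at the outset (closedness of Lyapunov stable sets and invariance of Hausdorff limits of invariant sets), since the passage to intersections over neighborhoods relies on the former and the invariance of $L$ on the latter.
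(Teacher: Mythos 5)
Your proof is correct, and it is essentially the argument the paper has in mind: the paper gives no proof here but defers to Lemma 2.2 of \cite{cmp}, whose proof is exactly this two-sided use of Lyapunov stability of $\Lambda^{+}$ for $f$ and of $\Lambda^{-}$ for $f^{-1}$, followed by intersecting over neighborhoods of each. Your handling of the backward half via the recurrence times $n_k$, and of periodic orbits being both forward and backward orbits, matches that standard argument, so nothing further is needed.
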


A {\em cycle} of a surface diffeomorphism $f$ is a finite collection of homoclinic classes
$H_1,\cdots,H_r$ of $f$ with $H_1=H_r$ such that
$$
(W^s_f(H_i)\setminus H_i)\cap (W^s_{f^{-1}}(H_{i+1})\setminus H_{i+1})\neq\emptyset,
\quad\quad\forall 1\leq i\leq r-1.
$$
The next result was proved in Section 3 of \cite{cmp}.

\begin{lemma}
 \label{homoneutral}
There is a residual subset of surface diffeomorphisms $\mathcal{R}_2$
such that if $f\in \mathcal{R}_2$, then $f$ has no cycles and
every homoclinic class of $f$ is neutral.
\end{lemma}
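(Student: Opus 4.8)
The plan is to derive both conclusions from the generic coincidence of homoclinic classes with chain recurrence classes, combined with Conley theory. Let $\mathcal{R}_2\subset\diff^1(M)$ be the residual set provided by the Bonatti--Crovisier theorem on which, for every $f\in\mathcal{R}_2$ and every saddle $p$ of $f$, the homoclinic class $H_f(p)$ coincides with the chain recurrence class $C_f(p)$ of $p$. Since every homoclinic class of $f$ is of this form, it suffices to show, for $f\in\mathcal{R}_2$, that (i) every chain recurrence class of $f$ is neutral, and (ii) $f$ has no cycles.

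For (i) I would invoke Conley's attractor--repeller decomposition. Fix a chain recurrence class $C=C_f(p)$, let $\mathcal{A}$ be the family of (Conley) attractors of $f$ containing $C$ and $\mathcal{A}^{*}$ the family of their dual repellers that contain $C$, and put $\Lambda^{+}=\bigcap_{A\in\mathcal{A}}A$ and $\Lambda^{-}=\bigcap_{R\in\mathcal{A}^{*}}R$. Conley's theorem then gives $C=\Lambda^{+}\cap\Lambda^{-}$. Every $A\in\mathcal{A}$ is Lyapunov stable for $f$, since it admits a neighborhood basis of positively invariant trapping regions; and $\mathcal{A}$ is directed downwards by inclusion, because the intersection of two trapping regions is a trapping region whose attractor still contains the invariant set $C$. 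Hence, given a neighborhood $U$ of $\Lambda^{+}$, compactness together with this directedness yields a single $A_{0}\in\mathcal{A}$ with $\Lambda^{+}\subset A_{0}\subset U$, and Lyapunov stability of $A_{0}$ provides a positively invariant $V$ with $\Lambda^{+}\subset V\subset U$; thus $\Lambda^{+}$ is Lyapunov stable for $f$. As dual repellers of $f$ are attractors of $f^{-1}$, the same argument shows $\Lambda^{-}$ is Lyapunov stable for $f^{-1}$. Therefore $C=\Lambda^{+}\cap\Lambda^{-}$ is neutral, and so is every homoclinic class of $f$.

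For (ii) suppose $f\in\mathcal{R}_2$ has a cycle $H_{1},\dots,H_{r}$ with $H_{1}=H_{r}$ and $r\geq2$, and for $1\leq i\leq r-1$ choose $z_{i}\notin H_{i}$ with $\omega(z_{i})\subset H_{i}$ and $\omega_{f^{-1}}(z_{i})\subset H_{i+1}$. Because the backward orbit of $z_{i}$ accumulates on $H_{i+1}$ and its forward orbit accumulates on $H_{i}$, an orbit segment of $z_{i}$ starts arbitrarily near $H_{i+1}$ and ends arbitrarily near $H_{i}$; using that each $H_{j}=C_f(p_{j})$ is chain transitive, this produces, for every $\varepsilon>0$, an $\varepsilon$-chain between any prescribed point of $H_{i+1}$ and any prescribed point of $H_{i}$. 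Concatenating these chains around the cycle and using $H_{1}=H_{r}$, we obtain for each $j$ an $\varepsilon$-chain from $p_{1}$ to $p_{j}$ and one from $p_{j}$ to $p_{1}$, for all $\varepsilon>0$; hence $p_{1}$ and $p_{j}$ lie in the same chain recurrence class, so $H_{1}=C_f(p_{1})=C_f(p_{j})=H_{j}$. Thus all $H_{j}$ coincide with a single class $H$, and then $z_{1}\notin H$ while $\omega(z_{1})\subset H$ and $\omega_{f^{-1}}(z_{1})\subset H$; the same pseudo-orbit argument makes $z_{1}$ chain recurrent and chain equivalent to $H=C_f(p_{1})$, whence $z_{1}\in H$, a contradiction. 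Therefore $f$ has no cycles.

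The main obstacle is the input generic fact that $H_f(p)=C_f(p)$, which ultimately relies on Hayashi's connecting lemma and Bonatti--Crovisier's perturbation machinery; granting it, the remaining argument is soft. The only points that need care are the downward directedness of the family of attractors (which is what lets one replace an intersection of Lyapunov stable sets by a single member) and the routine approximation of a genuine heteroclinic orbit segment by $\varepsilon$-pseudo-orbits joining the two classes.
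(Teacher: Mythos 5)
Your argument is correct, but it follows a genuinely different route from the one the paper relies on. The paper does not prove Lemma \ref{homoneutral} at all: it imports it from Section 3 of \cite{cmp}, where generic neutrality is obtained by showing that, $C^1$ generically, $H_f(p)=\cl(W^s(p))\cap\cl(W^u(p))$ with $\cl(W^u(p))$ Lyapunov stable for $f$ and $\cl(W^s(p))$ Lyapunov stable for $f^{-1}$ (via Hayashi's connecting lemma and semicontinuity arguments), and where the absence of cycles is ruled out by a related connecting-lemma perturbation argument. You instead take as input the Bonatti--Crovisier generic identity $H_f(p)=C_f(p)$ \cite{bc} and then argue softly: neutrality of chain recurrence classes follows from Conley's attractor--repeller structure (your verification of downward directedness of the family of attractors containing $C$, and the compactness step extracting a single attractor inside a given neighborhood, are exactly the points that need care and are handled correctly), and a cycle is excluded because the heteroclinic points $z_i$ furnish $\varepsilon$-chains joining the classes, forcing all classes of the cycle to coincide with a single chain class $H$ and then forcing $z_1\in C_f(p_1)=H$, contradicting $z_1\notin H$. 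Both routes ultimately rest on connecting-lemma technology to build the residual set; your input ($H=C$ generically) is stronger and postdates \cite{cmp}, but it buys a purely topological downstream argument and yields the extra information that chain recurrence classes themselves are neutral, whereas the \cite{cmp} route works directly with closures of invariant manifolds. One cosmetic remark: with the paper's literal definition of $H_f(x)$ as the closure of the set of transverse homoclinic points (possibly empty), you should note that the nontrivial case is the one where $H_f(x)$ contains the orbit of $x$, which is the setting in which the Bonatti--Crovisier identity is stated; the degenerate case is vacuous for neutrality.
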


About the residual subset $\mathcal{R}_2$ in Lemma \ref{homoneutral} we
obtain following lemma. 

\begin{lemma}
\label{cm}
If $f\in\mathcal{R}_2$, then every hyperbolic homoclinic class $H$ of $f$ satisfies
$
H\cap \cl(\sink(f))=\emptyset.
$
\end{lemma}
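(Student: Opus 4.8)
The plan is to argue by contradiction, using the neutrality of the homoclinic classes of $f\in\mathcal{R}_2$ (Lemma \ref{homoneutral}) to push the orbits of the approximating sinks into a neighborhood of $H$ on which hyperbolicity persists. So fix $f\in\mathcal{R}_2$ and a hyperbolic homoclinic class $H=H_f(x)$ of $f$, with $x$ a saddle, and suppose for contradiction that there is a point $z\in H\cap\cl(\sink(f))$. Choose sinks $p_k\in\sink(f)$ converging to $z$.

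First I would record that $H$ is of saddle type, i.e.\ that its hyperbolic splitting has one-dimensional summands, $\dim E^s_H=\dim E^u_H=1$. Since $H_f(x)=H_{f^{-1}}(x)$, the set $H$ is transitive for both $f$ and $f^{-1}$ and it contains the saddle $x$. The subset of $H$ on which the unstable subbundle has a given dimension is compact, invariant and relatively open in $H$; if on such a subset the unstable subbundle were trivial (resp.\ equal to the whole tangent space), that subset would be a uniformly contracting (resp.\ expanding) invariant set, hence would absorb the dense orbit of $H$ under $f$ (resp.\ under $f^{-1}$), forcing $x\in H$ to be a sink (resp.\ a source); this is impossible, so the unstable subbundle is one-dimensional throughout $H$. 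By the persistence of hyperbolicity under perturbations of the invariant set (cone-field criterion, see \cite{hk}), $H$ then admits a neighborhood $U$ such that every compact invariant subset of $U$ is hyperbolic with one-dimensional stable and unstable subbundles.

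Next I would place the orbits of the $p_k$ inside $U$. By Lemma \ref{homoneutral} the class $H$ is neutral, and $(p_k)$ is a sequence of periodic points converging to the point $z\in H$; hence by Lemma \ref{neutral} every Hausdorff limit of the orbits of the $p_k$ is contained in $H$. Because $H\subset U$ and $U$ is open, it follows (passing to a subsequence, and using that the space of compact subsets of $M$ is compact for the Hausdorff distance) that the orbit of $p_k$ is contained in $U$ for all large $k$. Being compact and invariant, that orbit is then a hyperbolic set with one-dimensional unstable subbundle, so $Df^{n_{p_k}}(p_k)$ has an eigenvalue of modulus larger than $1$; this contradicts the fact that $p_k$ is a sink, and the contradiction proves $H\cap\cl(\sink(f))=\emptyset$.

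The step I expect to be the main obstacle is the control of the \emph{full} orbits of the $p_k$: mere proximity of the points $p_k$ to $H$ does not suffice, since the orbit of a sink could leave any prescribed neighborhood of $H$, and it is exactly the neutrality guaranteed by $f\in\mathcal{R}_2$ (through Lemma \ref{neutral}) that prevents this. The remaining ingredients---that a hyperbolic surface homoclinic class is of saddle type, that hyperbolicity descends to the invariant subsets of a suitable neighborhood, and that the orbit of a sink admits no uniformly expanded invariant direction---are standard.
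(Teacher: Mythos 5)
Your proof is correct and follows essentially the same route as the paper: argue by contradiction, use neutrality of $H$ (Lemmas \ref{homoneutral} and \ref{neutral}) to force the full orbits of the approximating sinks into any prescribed neighborhood of $H$, and then invoke hyperbolicity of $H$ to rule this out. The only difference is that you spell out the paper's one-line assertion that a hyperbolic set cannot have orbits of sinks accumulating on it (saddle type of $H$ plus robustness of hyperbolicity on compact invariant subsets of a neighborhood), which is a welcome but not essentially different elaboration.
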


\begin{proof}
Take a hyperbolic homoclinic class $H$ of some $f\in\mathcal{R}_2$ and suppose
by contradiction that $H\cap \cl(\sink(f))\neq\emptyset$.
Then,
there is a sequence $x_k$ of sinks converging to some $p\in H$.
Since $f\in\mathcal{R}_2$ we have that
$H$ is neutral, and so, by Lemma \ref{neutral}, it contains any Hausdorff limit of the sequence formed by the orbits of $x_k$.
Since $H$ is hyperbolic by hypothesis, and a hyperbolic set cannot contain infinitely many orbits of
sinks in a neighborhood of it, we obtain a contradiction proving the result.
\end{proof}

\subsection{Quasi-attracting sets and dissipative attractors}
A {\em quasi-attracting set} of a surface diffeomorphism $f$ is a compact invariant set
$\Lambda$ for which there is a nested sequence of of open neighborhoods $U_n$
such that $f(\cl(U_n))\subset U_n$ and $\Lambda=\bigcap_{n\in\mathbb{N}}U_n$.
Every attractor is a quasi-attracting set which in turns is Lyapunov stable.

We denote by $\sad(f)$ the set of saddles of $f$.
We say that $x\in\sad(f)$ is {\em dissipative} if
$|\det(Df^{n_{x,f}}(x))|<1$.
Denote by $\sad_d(f)$ the set of
dissipative saddles.

An attractor $A$ is {\em dissipative} if 
$A=H_f(x)$ for some $x\in\sad_d(f)$.

We will study quasi-attracting set first and, then, apply the result to obtain dissipativity for non-trivial attractors.
For this we shall use the classical result below \cite{F}.

\begin{lemma}[Franks's Lemma]
\label{frank}
For every $f\in \mbox{Diff}^1(M)$ where $M$ is a closed manifold
and every neighborhood $W(f)\subset \mbox{Diff}^1(M)$
of $f$ there exist $\epsilon>0$ and a neighborhood $W_0(f)\subset
W(f)$ of $f$ such that if $h\in W_0(f)$, $\{x_0,\cdots ,x_{n-1}\}\subset M$
is a finite set ($n\in\mathbb{N}^+$), $U\subset M$ is a neighborhood of $\{x_0\cdots ,x_{n-1}\}$
and $L_i:T_{x_i}M\to T_{h(x_i)}M$ are linear maps satisfying
$\| L_i-Dh(x_i)\| <\epsilon$ ($\forall i=0,\cdots ,n-1$),
then there is $g\in W(f)$ such that $g(x)=h(x)$
in $\{x_0,\cdots ,x_{n-1}\}\cup (M\setminus U)$, and
$Dg(x_i)=L_i$ for every $i=0,\cdots,n-1$.
\end{lemma}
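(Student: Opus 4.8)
The plan is to reduce, by working in local charts, to a one-point bump-function construction: a $C^1$ map can have its derivative at a prescribed point replaced by any sufficiently close linear map through a $C^1$-small perturbation supported in an arbitrarily small neighborhood of that point. First I would fix, once and for all and using compactness of $M$, a finite atlas $\{(V_j,\phi_j)\}$ of $M$ whose charts have uniformly bounded distortion (comparing the Riemannian metric with the Euclidean metric of the chart and the identifications it induces on tangent spaces), together with a $C^\infty$ bump function $\beta\colon\mathbb{R}^m\to[0,1]$, $m=\dim M$, with $\beta\equiv 1$ on the ball of radius $1/2$, $\beta\equiv 0$ off the unit ball, and $C_\beta:=\sup_y\|D\beta(y)\|$. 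All constants below will depend only on $f$, $W(f)$, this atlas, and $\beta$.

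Given $h$ in a small neighborhood $W_0(f)$ of $f$ (to be fixed) and a point $x_i$, choose charts around $x_i$ and around $h(x_i)$ in which both points become the origin, so that $h$ reads as a $C^1$ map $\hat h(y)=Ay+\psi(y)$ near $0\in\mathbb{R}^m$ with $A=D\hat h(0)$, $\psi(0)=0$, $D\psi(0)=0$; a linear map $L$ with $\|L-Dh(x_i)\|<\epsilon$ then becomes a matrix $\hat L$ with $\|\hat L-A\|<C_1\epsilon$, where $C_1$ is the chart-distortion constant. For small $r>0$ set
\[
\hat g(y)=\hat h(y)+\beta(y/r)\,(\hat L-A)\,y .
\]
On the ball of radius $r/2$ this gives $\hat g(y)=\hat Ly+\psi(y)$, so $D\hat g(0)=\hat L$; off the ball of radius $r$ it gives $\hat g=\hat h$. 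Differentiating the extra term and using that $|y|\le r$ on its support, one obtains $\|\hat g-\hat h\|_{C^0}\le\|\hat L-A\|\,r$ and $\|D\hat g-D\hat h\|_{C^0}\le(1+C_\beta)\|\hat L-A\|$, hence a $C^1$ bound on the perturbation of the form $C_2\epsilon$ with $C_2$ independent of $r$ (the second distortion constant arising when transporting back to $M$).

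To finish, I would shrink $r$ so that the $r$-balls around the distinct points $x_0,\dots,x_{n-1}$ (in their respective charts) are pairwise disjoint, contained in $U$, and contained in chart domains — possible because $\{x_0,\dots,x_{n-1}\}$ is finite and $U$ is a neighborhood of it — and perform the above modification near each $x_i$. The resulting $g$ agrees with $h$ on $\{x_0,\dots,x_{n-1}\}\cup(M\setminus U)$, satisfies $Dg(x_i)=L_i$, and is $C^1$-$C_2\epsilon$-close to $h$; choosing $W_0(f)\subset W(f)$ and $\epsilon>0$ small enough (using that $\mathrm{Diff}^1(M)$ is $C^1$-open and $W(f)$ is open) puts $g$ in $W(f)$. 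I expect the only delicate point to be uniformity: the same $\epsilon$ and $W_0(f)$ must serve for every finite set, every $U$, and every $h\in W_0(f)$ simultaneously. This is precisely what the compactness reduction secures — fixing the atlas and $\beta$ in advance makes all distortion constants uniform — together with the crucial observation that the $C^1$ size of the perturbation is bounded by $C_2\epsilon$ with $C_2$ not depending on the radius $r$, so that $r$ (which does depend on $U$ and on the configuration of the points) never enters the smallness condition imposed on $\epsilon$.
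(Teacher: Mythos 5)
Your proposal is essentially correct, but there is nothing in the paper to compare it with: the paper states this lemma as a classical result with a citation to Franks's 1971 paper and gives no proof at all. What you wrote is the standard proof of Franks's Lemma — local charts, a fixed bump function, the perturbation $\hat g(y)=\hat h(y)+\beta(y/r)(\hat L-A)y$, and the key observation that its $C^1$ size is bounded by a constant times $\|\hat L-A\|$ independently of the radius $r$, so that $r$ can absorb all the dependence on $U$ and on the configuration of the points while $\epsilon$ and $W_0(f)$ are chosen once and for all from $W(f)$. Two small points deserve explicit mention if you write this up in full: first, the map $g$ obtained must be checked to be a diffeomorphism, which follows from the $C^1$-openness of $\diff^1(M)$ inside $C^1(M,M)$ for compact $M$, exactly as you indicate; second, when you transport the perturbation back to $M$ via $\phi_{j'}^{-1}$, controlling $\|Dg-Dh\|$ uses not only the bound $\|D\hat g-D\hat h\|\le(1+C_\beta)\|\hat L-A\|$ but also the $C^0$ displacement $\|\hat g-\hat h\|\le\|\hat L-A\|\,r$ together with a Lipschitz bound on $D\phi_{j'}^{-1}$, so your ``uniformly bounded distortion'' should be understood as including $C^2$ bounds on the (finitely many, smooth) charts; with that reading the constants $C_1,C_2$ are indeed uniform and the argument closes. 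Also note that $\hat L$ must be the chart representative of $L_i$, i.e. conjugated by the chart derivatives at $x_i$ and $h(x_i)$, which is where $C_1$ enters.
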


Franks's Lemma is used in the proof below.

\begin{lemma}
\label{move-attractor}
There is a residual subset $\mathcal{R}_6$ of surface diffeomorphisms
$f$ for which every quasi-attracting set $A$ satisfies
$$
A\cap(\cl(\sad_d(f))\cup\cl(\sink(f)))\neq\emptyset.
$$
\end{lemma}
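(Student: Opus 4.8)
The plan is to argue that a quasi-attracting set $A$ which misses both $\cl(\sad_d(f))$ and $\cl(\sink(f))$ must, after a small perturbation, contain a sink — contradicting the fact that, by lower-semicontinuity of $S(g)=\cl(\sink(g))$ (Lemma \ref{lema0}), sinks cannot appear suddenly inside $A$ on a residual set. More precisely, I would first set up the residual set: let $\mathcal{R}_6 \subset \mathcal{R}_0 \cap \mathcal{R}_2$ be obtained by a standard argument making the relevant "robust" behaviour of quasi-attracting sets semicontinuous. Since a quasi-attracting set is Lyapunov stable, it is closed under $\omega$-limits of nearby points, so the obstruction to hyperbolicity must come from the dynamics \emph{inside} $A$.

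\medskip
\noindent The key steps, in order:

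\emph{Step 1 (Dichotomy on $A$).} Suppose $A \cap (\cl(\sad_d(f)) \cup \cl(\sink(f))) = \emptyset$ for some quasi-attracting set $A$ of $f$ in the residual set. Because $A$ contains no dissipative saddles in its closure and no sinks, every periodic orbit in $A$ is either a source or a saddle with Jacobian of modulus $\geq 1$. Using the Lyapunov stability of $A$ together with the fact that $A=\bigcap U_n$ with $f(\cl(U_n))\subset U_n$, I would show the forward dynamics on $A$ contracts volume: iterating any small ball in $U_n$ stays in $U_n$, so $A$ should \emph{not} contain a source at all (a source pushes mass outward, contradicting $f(\cl(U_n))\subset U_n$ once one tracks unstable manifolds). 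Hence every periodic point of $A$ is a saddle, and volume-contraction of the neighbourhood forces $|\det Df^{n_x}(x)| \le 1$ for those saddles; combined with $A\cap\cl(\sad_d(f))=\emptyset$, they all satisfy $|\det Df^{n_x}(x)|=1$.

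\emph{Step 2 (Perturbation to create a sink).} Now apply Franks's Lemma (Lemma \ref{frank}): take a periodic saddle $x$ in $A$ (one exists $C^1$ generically, e.g.\ by Pugh's closing lemma applied inside the quasi-attractor, or because $A$ supports a periodic orbit by the general preliminaries), with eigenvalues $|\lambda|<1<|\mu|$ and $|\lambda\mu|=1$. Perturb $Df^{n_x}$ along the orbit of $x$ to push $|\mu|$ below $1$ while keeping $|\lambda|<1$; this changes the derivative by an arbitrarily small amount and, by Franks's Lemma, is realised by some $g$ arbitrarily $C^1$-close to $f$, equal to $f$ off a small neighbourhood of the orbit. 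Then $x$ becomes a sink of $g$, and by choosing the perturbation supported well inside $U_n$ one keeps $U_n$ a trapping region, so $x$ together with its basin lies in the continuation of the quasi-attractor; in particular $x \in A_g$ where $A_g$ is the quasi-attracting set of $g$ near $A$.

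\emph{Step 3 (Contradiction via semicontinuity).} We produced $g$ arbitrarily $C^1$-close to $f$ with a sink inside (a neighbourhood of) $A$. Running this with a sequence $g_k \to f$ and using that $f$ lies in the residual set $\mathcal{R}_0$ — on which $S(h)=\cl(\sink(h))$ is upper-semicontinuous at $f$ — the sinks $x_k \in \cl(\sink(g_k))$ accumulate on a point of $\cl(\sink(f))$; but $x_k \to $ some point of $A$ forces $A \cap \cl(\sink(f)) \neq \emptyset$, contradicting our assumption. (Alternatively, one phrases $\mathcal{R}_6$ directly so that quasi-attracting sets of $f$ vary lower-semicontinuously and "robustly avoid sinks" is an open condition that fails to be dense, forcing the conclusion on the residual set.)

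\medskip
\noindent The main obstacle I expect is \emph{Step 1}: cleanly ruling out sources and extracting a saddle periodic orbit from the quasi-attracting set, and then showing the Jacobian constraint $|\det| \le 1$ in a way that legitimately interacts with the trapping neighbourhoods $U_n$ — one must be careful that the perturbation in Step 2 does not destroy the trapping property, which is why it is essential that $f(\cl(U_n)) \subset U_n$ with strict inclusion, giving room for a $C^1$-small perturbation. The closing-lemma input (finding a periodic saddle inside a quasi-attractor generically) is standard but should be cited from the general preliminaries of Section \ref{sec1.5}.
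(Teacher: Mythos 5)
Your Step 1 contains the decisive gaps, and they are not repairable along the lines you indicate. First, a quasi-attracting set need not contain any periodic orbit of $f$ at all, and $M$ itself satisfies the paper's definition of a quasi-attracting set (take all $U_n=M$), so ``trapping excludes sources'' is false as stated; the closing lemma/general density theorem only gives periodic orbits accumulating on $A$, or with orbits inside the trapping neighborhoods $U_n$, not inside $A$. Second, and more seriously, the trapping property gives no \emph{pointwise} control of Jacobians: a saddle inside an attracting region can perfectly well satisfy $|\det Df^{n_x}(x)|>1$ (only an averaged, measure-theoretic statement survives), so your conclusion that every saddle in $A$ has $|\det|=1$ is unjustified. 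Third, in Step 2 a Franks perturbation of size $\epsilon$ along a period-$n$ orbit can only shrink the unstable eigenvalue by a factor of order $(1+\epsilon)^{n}$; for a saddle whose unstable eigenvalue is bounded away from $1$ and whose period is bounded you cannot turn it into a sink, so even granting Step 1 the perturbation does not go through. (Creating a sink is also more than is needed: a dissipative saddle of a nearby diffeomorphism inside a neighborhood $U$ of $A$ already yields the contradiction.)

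The paper's proof supplies exactly the two ingredients your outline lacks. It first invokes Lemma 3.1 of \cite{po}: every quasi-attracting set carries an ergodic invariant measure $\mu$ with $\int\log|\det Df|\,d\mu\le 0$ --- the correct (averaged) form of your volume-contraction heuristic. Then, on a residual set coming from the Ergodic Closing Lemma (\cite{M}, see \cite{abc}), $\mu$ is the weak-* limit of Dirac measures along periodic saddle orbits of diffeomorphisms $g_k\to f$; for $k$ large these orbits lie in a prescribed neighborhood $U$ of $A$ and satisfy $|\det Dg_k^{n_{p_k,g_k}}(p_k)|<e^{n_{p_k,g_k}\delta}$ for a small $\delta$. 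Finally, Franks's Lemma multiplies each derivative along the orbit by $e^{-\delta/2}$ --- an $\epsilon$-small change --- producing $\tilde g$ close to $f$ with a periodic point in $U$ whose Jacobian is $<1$, i.e.\ a dissipative saddle or a sink, contradicting the upper semicontinuity (on a residual set) of $f\mapsto\cl(\sad_d(f))\cup\cl(\sink(f))$. You do anticipate the semicontinuity contradiction (your Step 3) and the use of Franks's Lemma, but without the ergodic measure on $A$ and the Ergodic Closing Lemma there is no mechanism to produce periodic orbits near $A$ with the Jacobian control that the perturbation needs.
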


\begin{proof}
Define $S:\diff^1(M)\to 2^M_c$ by
$$
S(f)=\cl(\sad_d(f))\cup\cl(\sink(f))
$$
Clearly $S$ is lower-semicontinuous, and so,
it is upper-semicontinuous in a residual subset $\mathcal{A}$ of $\diff^1(M)$.

By a well-known consequence of the Ergodic Closing Lemma \cite{M} (c.f. Theorem 3.8 p.13 in \cite{abc})
there is another residual subset $\mathcal{B}$ of surface diffeomorphisms $f$
such that  for every  ergodic measure $\mu$ of
$f$
there are sequences $g_k\to f$ and $p_k\in\sad(g_k)$ such that the Dirac measure supported on
the orbit $\gamma_k$ of $p_k$ under $g_k$ converges to $\mu$ with respect to the weak-* topology.

Define $\mathcal{R}_6=\mathcal{A}\cap\mathcal{B}$.
Then, $\mathcal{R}_6$ is a residual subset of surface diffeomorphisms.

Now, take $f\in\mathcal{R}_6$
and assume by contradiction that
$$
A\cap\cl(\sad_d(f))\cup\cl(\sink(f)))=\emptyset
$$
for some quasi-attracting set $A$ of $f$.
Since $S$ is upper-semicontinuous on $\mathcal{A}$, we can arrange
neighborhoods $U$ of $A$ and $W(f)$ of $f$ such that
\begin{equation}
\label{contra}
U\cap(\sad_d(\tilde{g})\cup\sink(\tilde{g}))=\emptyset,
\quad\quad\forall \tilde{g}\in W(f).
\end{equation}

Put this $W(f)$ in the Franks's Lemma to obtain the neighborhood
$W_0(f)\subset W(f)$ of $f$ and $\epsilon>0$.
Set
$$
C=\sup\{\|Dg\|:g\in W(f)\}
$$
and fix $\delta>0$ such that
$$
|1-e^{-\frac{\delta}{2}}|<\frac{\epsilon}{C}.
$$

It is known (Lemma 3.1 in \cite{po}) that
$f$ has an ergodic invariant measure
$\mu$ supported on $A$ such that
$$
\int\log(|\det f|)d\mu\leq0.
$$

Since $f\in\mathcal{B}$, we can take sequences $g_k\to f$ and $p_k\in\sad(g_k)$ such that the Dirac measure supported on
the orbit $\gamma_k$ of $p_k$ under $g_k$ converges to $\mu$.

Since $\int\log(|\det f|)d\mu\leq0$, we can fix $k$ such that
$$
p_k\in U, \quad g_k\in W_0(f)\quad\mbox{ and }\quad|\det Dg_k^{n_{p_k,g_k}}(p_k)|<e^{n_{p_k,g_k}\delta}.
$$
Once we fix this $k$ we define the linear maps
$L_i: T_{g_k^i(p_k)}M\to T_{g_k^i(p_k)}M$, $0\leq i\leq n_{p_k,g_k}-1$ by
$L_i=e^{-\frac{\delta}{2}}Dg_k(g^i_k(p_k))$.
It follows that
$$
\|L_i-Dg_k(g^i_k(p_k))\|\leq |1-e^{-\frac{\delta}{2}}|\cdot C<\epsilon,
\quad\quad\forall 0\leq i\leq n_{p_k,g_k}-1.
$$
Since $g\in W_0(f)$, we can apply the Franks's Lemma to
$x_i=g_k^i(p_nk)$, $0\leq i\leq n_{p_k,g_k}-1$, in order to obtain a
diffeomorphism $\tilde{g}\in W(f)$
such that
$\tilde{g}=g_k$ along the orbit of $p_k$ under $g_k$ (thus $p_k$ is a periodic point of $\tilde{g}$ with $n_{p_k,\tilde{g}}=n_{p_k,g_k}$)
and $D\tilde{g}(\tilde{g}^i(p_k))=L_i$ for $0\leq i\leq n_{p_k,g_k}-1$.
Consequently,
$$
D\tilde{g}^{n_{p_k,\tilde{g}}}(p_k)=\displaystyle\prod_{i=0}^{n_{p_k,g_k}-1}L_i.
$$
A direct computation then shows that
$$
|\det(D\tilde{g}^{n_{p_k,\tilde{g}}}(p_k))|
=e^{-n_{p_k,g_k}\delta} |\det Dg_k^{n_{p_k,g_k}}(p_k)|<1.
$$
Up to a small perturbation if necessary we can assume
that $p_k$ has no eigenvalues of modulus $1$.
Then, we have $p_k\in \sad_d(\tilde{g})\cup\sink(\tilde{g})$
by the previous inequality yielding
$p_k\in U\cap(\sad_d(\tilde{g})\cup\sink(\tilde{g}))$.
Since $\tilde{g}\in W(f)$, we obtain a contradiction by (\ref{contra}) and
the result follows.
\end{proof}

An application of the above lemma is given below.

\begin{corollary}
\label{prove-attractor}
There is a residual subset $\mathcal{R}_7$ of surface diffeomorphisms $f$
for which every non-trivial attractor is dissipative.
In particular,  they are all contained in $\cl(\sad_d(f))\setminus \cl(\sink(f))$.
\end{corollary}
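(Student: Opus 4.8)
The plan is to combine Lemma~\ref{move-attractor} with the $C^1$-generic identification of non-trivial attractors with homoclinic classes on surfaces. Let $\mathcal{R}_7=\mathcal{R}_6\cap\mathcal{R}_0\cap\mathcal{H}$, where $\mathcal{R}_6$ is the residual set of Lemma~\ref{move-attractor}, $\mathcal{R}_0$ is the one of Lemma~\ref{lema0}, and $\mathcal{H}$ is a residual set of surface diffeomorphisms for which every non-trivial attractor that contains a periodic point $p$ equals $H_f(p)$. The existence of such an $\mathcal{H}$ is a standard consequence of the connecting-lemma techniques of Hayashi and Bonatti--Crovisier on surfaces: $C^1$-generically the homoclinic classes coincide with the maximal transitive (equivalently chain-transitive) sets, and a homoclinic class is the homoclinic class of each of its periodic saddles (see \cite{abcd}). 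Fix $f\in\mathcal{R}_7$ and a non-trivial attractor $A$. Since every attractor is a quasi-attracting set, we may fix an open neighbourhood $U$ of $A$ with $f(\cl(U))\subset U$; then $A$ is the maximal invariant subset of $U$, so every periodic orbit meeting $U$ is contained in $A$.

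First I would discard sinks. Since $A$ is quasi-attracting, Lemma~\ref{move-attractor} gives $A\cap(\cl(\sad_d(f))\cup\cl(\sink(f)))\neq\emptyset$. Suppose some $p\in A$ belongs to $\cl(\sink(f))$ and pick sinks $x_k\to p$. For large $k$ we have $x_k\in U$, hence $\operatorname{orb}(x_k)\subset A$ by the previous paragraph. Writing $A=\omega(z)$ with $z\in A$, the forward orbit of $z$ eventually enters the basin of the attracting orbit of $x_k$, which forces $\omega(z)=\operatorname{orb}(x_k)$, i.e. $A$ is a periodic orbit --- contradicting non-triviality. Therefore $A\cap\cl(\sink(f))=\emptyset$ and, by Lemma~\ref{move-attractor}, $A\cap\cl(\sad_d(f))\neq\emptyset$; so there are dissipative saddles $q_k\to p\in A$, and again $q_k\in U$ for large $k$ forces $\operatorname{orb}(q_k)\subset A$. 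Thus $A$ contains a dissipative saddle $x\in\sad_d(f)$.

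The defining property of $\mathcal{H}$ now gives $A=H_f(x)$, so $A$ is a dissipative homoclinic class and hence a dissipative attractor --- the first assertion. For the ``in particular'' statement, recall that $H_f(x)$ is the closure of the periodic saddles homoclinically related to $x$ (Birkhoff--Smale), and every such saddle $y$ is accumulated by periodic orbits contained in a horseshoe generated by $\operatorname{orb}(x)$ together with a transverse homoclinic loop through $y$; these orbits can be chosen to spend a proportion of their period arbitrarily close to $1$ near $\operatorname{orb}(x)$. Since $x$ is dissipative, if that proportion is close enough to $1$ the orbit average of $\log|\det Df|$ along such an orbit is negative, so the orbit is dissipative and $y\in\cl(\sad_d(f))$. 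Hence $A=H_f(x)\subset\cl(\sad_d(f))$, and together with $A\cap\cl(\sink(f))=\emptyset$ we obtain $A\subset\cl(\sad_d(f))\setminus\cl(\sink(f))$.

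The main obstacle is the step $A=H_f(x)$. Lemma~\ref{move-attractor} combined with the trapping-region argument only yields a dissipative saddle \emph{inside} $A$; upgrading this to ``$A$ is the homoclinic class of that saddle'' genuinely relies on the $C^1$-generic structure of chain recurrence classes on surfaces --- in particular that a Lyapunov-stable chain recurrence class carrying a periodic point is a homoclinic class --- together with the genericity of homoclinic relations between a homoclinic class and the saddles lying in it. The remaining ingredients (the exclusion of sinks via transitivity of $A$, and $H_f(x)\subset\cl(\sad_d(f))$ via shadowing inside horseshoes) are elementary and require no genericity beyond Kupka--Smale.
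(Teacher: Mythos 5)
Your argument is correct and follows essentially the same route as the paper: Lemma \ref{move-attractor} plus the trapping-neighborhood argument yields a dissipative saddle $x$ inside the non-trivial attractor $A$ (after excluding sinks via transitivity), a generic identification gives $A=H_f(x)$, and a Birkhoff--Smale/time-proportion argument gives $A\subset\cl(\sad_d(f))\setminus\cl(\sink(f))$. The only real difference is bookkeeping: where you import an auxiliary residual set $\mathcal{H}$ from the Bonatti--Crovisier/\cite{abcd} coincidence of chain classes and homoclinic classes, the paper instead uses its Lemma \ref{homoneutral} (the set $\mathcal{R}_2$, neutrality of homoclinic classes from \cite{cmp}) to identify $A$ with the homoclinic class of the saddle it contains; both inputs are legitimate and the rest of your proof fills in steps the paper leaves terse.
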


\begin{proof}
Let  $\mathcal{R}_2$ and $\mathcal{R}_6$ be the residual subsets in lemmas \ref{neutral} and \ref{move-attractor}
respectively.
By Lemma \ref{homoneutral} we have that
every non-trivial attractor of $f$ is a homoclinic class and, furthermore,
the homoclinic classes of $f$ are pairwise disjoint.

Define $\mathcal{R}_7=\mathcal{R}_2\cap\mathcal{R}_6$.
Then, $\mathcal{R}_7$ is a residual subset of surface diffeomorphisms.

Now, take $f\in\mathcal{R}_7$ and a non-trivial attractor $A$ of
$f$.
Since $f\in\mathcal{R}_6$, and every attractor is a quasi-attracting set,
we have $A\cap(\cl(\sad_d(f))\cup\cl(\sink(f)))\neq\emptyset$ by Lemma \ref{move-attractor}.
Since $A$ is non-trivial, we actually have
$A\cap\cl(\sad_d(f))\neq\emptyset$.
Since $A$ is an attractor, we conclude that there is $x\in A\cap \sad_d(f)$,
and so, $H_f(x)\subset A$.
Since $f\in \mathcal{R}_2$, Lemma \ref{homoneutral} implies that $A$ is also a homoclinic class
and, then, $A=H_f(x)$.

Since $x\in\sad_d(f)$, we obtain $H_f(x)\subset\cl(\sad_d(f))$ by the Birkhoff-Smale theorem \cite{hk}.
Then, $A\subset \cl(\sad_d(f))\setminus\cl(\sink(f))$
and the proof follows.
\end{proof}

\subsection{Dissipative presaddles}
We say that $x$ is a {\em dissipative presaddle} of a surface diffeomorphism $f$
if there are sequences $f_k\to f$ and $x_k\in \sad_d(f_k)$ such that
$x_k\to x$. This definition is similar to that in \cite{w0}.
Denote by $\sad^*_d(f)$ the set of dissipative presaddles of $f$.

The following lemma says that the set of dissipative presaddles
does not explode. Its proof is a direct consequence of the definition.

\begin{lemma}
 \label{l5}
For every surface diffeomorphism $g$ and every neighborhood $U$ of $\sad_d^*(g)$ there is a neighborhood $\mathcal{V}_g$ of $g$ such that
$\sad_d^*(h)\subset U$, for every $h\in \mathcal{V}_g$.
\end{lemma}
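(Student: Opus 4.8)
The plan is to argue by contradiction using a diagonal extraction, exploiting only the fact that $\diff^1(M)$ is metrizable and $M$ is compact. Suppose the conclusion fails for some surface diffeomorphism $g$ and some neighborhood $U$ of $\sad_d^*(g)$. Then for every neighborhood $\mathcal{V}$ of $g$ there is $h\in\mathcal{V}$ with $\sad_d^*(h)\not\subset U$; choosing $\mathcal{V}$ to run over a countable neighborhood basis of $g$ we obtain a sequence $h_n\to g$ in $\diff^1(M)$ together with points $y_n\in\sad_d^*(h_n)\setminus U$. Since $M$ is compact we may pass to a subsequence so that $y_n\to y$ for some $y\in M$, and since $U$ is open we have $y\notin U$; in particular $y\notin\sad_d^*(g)$.

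Next I would unwind the definition of $\sad_d^*(h_n)$: for each $n$ there exist sequences $f^{(n)}_k\to h_n$ in $\diff^1(M)$ and $x^{(n)}_k\in\sad_d(f^{(n)}_k)$ with $x^{(n)}_k\to y_n$ as $k\to\infty$. Fixing a metric on $\diff^1(M)$, for each $n$ I choose an index $k(n)$ so large that $f^{(n)}_{k(n)}$ is within distance $1/n$ of $h_n$ and $x^{(n)}_{k(n)}$ is within distance $1/n$ of $y_n$, and I set $g_n:=f^{(n)}_{k(n)}$ and $x_n:=x^{(n)}_{k(n)}\in\sad_d(g_n)$. By the triangle inequality $g_n\to g$ and $x_n\to y$.

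Thus the pair of sequences $(g_n)$ and $(x_n)$ witnesses, directly from the definition, that $y\in\sad_d^*(g)$, contradicting $y\notin\sad_d^*(g)$. Hence the desired neighborhood $\mathcal{V}_g$ exists, which proves the lemma. I do not anticipate any genuine obstacle here: the argument is entirely formal, and the only point needing a little care is the diagonal choice of $k(n)$, which is legitimate because $\diff^1(M)$ is metrizable; no use of the $C^1$ topology beyond this is required.
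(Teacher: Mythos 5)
Your argument is correct: the contradiction-plus-diagonal-extraction argument is exactly the standard way to verify this kind of non-explosion statement, and it is precisely what the paper has in mind when it states that the lemma "is a direct consequence of the definition" (the paper gives no further details). The only points needing care — metrizability of $\diff^1(M)$ for the diagonal choice, and that the limit $y$ of the points $y_n\notin U$ stays outside the open set $U$ — are handled properly in your write-up.
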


Another interesting property is that the set of dissipative presaddles contains the set of accumulation points of the sinks.
More precisely, we have the following result.

\begin{lemma}
\label{l9}
Every Kupka-Smale surface diffeomorphism $f$ satisfies
$$
\cl(\sink(f))\setminus\sink(f)\subset \sad_d^*(f).
$$
\end{lemma}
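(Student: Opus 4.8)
The plan is to exhibit $p$ as a limit of dissipative saddles of small perturbations of $f$, obtained by destabilizing a sink of very large period. First I would set up the reduction. Pick $p\in\cl(\sink(f))\setminus\sink(f)$ and sinks $s_k\to p$, which we may take pairwise distinct since $p\notin\sink(f)$. Since $f$ is Kupka--Smale, for each $N$ it has only finitely many periodic points of period $\le N$ (an accumulation of such points, after passing to a power, would be a non-isolated periodic point of bounded period, against hyperbolicity); hence $n_k:=n_{s_k,f}\to\infty$. It suffices to find $f_k\to f$ in $\diff^1(M)$ with $s_k\in\sad_d(f_k)$, because then $x_k:=s_k\to p$ witnesses $p\in\sad_d^*(f)$. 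So I fix $k$, write $M_i=Df(f^i(s_k))$ and $A_k=M_{n_k-1}\cdots M_0=Df^{n_k}(s_k)$ (both eigenvalues of modulus $<1$, so $|\det A_k|<1$), and use Franks's Lemma (Lemma \ref{frank}): it is enough to produce linear maps $L_i$ with $\|L_i-M_i\|<\epsilon$ whose product $B_k$ has one eigenvalue of modulus $>1$ and $|\det B_k|<1$, and then realize these by a perturbation supported in a shrinking neighborhood of $\mathrm{orb}(s_k)$, so that $f_k\to f$ automatically. The determinant requirement is the easy part: it persists under any small perturbation and can be brought below $1$ by an extra per-step correction of size $O(1/n_k)$.

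The heart of the matter is to create the expanding eigenvalue. The idea is to distribute a single ``stretching'' of the cocycle in $n_k$ equal pieces along $\mathrm{orb}(s_k)$ — roughly, to conjugate so that the dominant eigendirection of $A_k$ becomes the intended unstable direction and insert an $n_k$-th root of the required amplification between consecutive $M_i$ — so that the per-step correction has size comparable to $\mu_k:=-\frac{1}{n_k}\log\rho(A_k)>0$. This succeeds whenever $\mu_k\to 0$ along a subsequence, turning $s_k$ into a dissipative saddle of some $f_k\to f$. It remains to dispose of the case $\mu_k\ge\mu_0>0$ for all large $k$. If, moreover, the cocycles along $\mathrm{orb}(s_k)$ contract uniformly in finite time, then the compact invariant set $\Lambda:=\cl(\bigcup_k\mathrm{orb}(s_k))\ni p$ is uniformly contracted by a power of $Df$, hence reduces to a finite union of sinks, so $p\in\sink(f)$, a contradiction. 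In the remaining regime there is a definite amount of ``cancelled expansion'' along $\mathrm{orb}(s_k)$ (the orbit average of $\log\|M_i\|$ exceeds the top Lyapunov exponent by a fixed amount); one extracts, by a Pliss-type selection, long sub-orbits along which $Df$ expands a fixed direction at a fixed rate, and inserts small rotations — of size $\to 0$ as $n_k\to\infty$ — that realign this expansion so it is no longer annihilated around the full period, again producing a dissipative saddle at $s_k$ via Franks's Lemma.

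I expect this last rotation argument — in the spirit of Ma\~n\'e's perturbation techniques and of \cite{PS}, and complicated by the possibility of a small angle between the eigendirections of $A_k$ — to be the main obstacle; everything else is bookkeeping with Franks's Lemma and with Kupka--Smale genericity. Granting it, in every case we obtain $f_k\to f$ and $x_k=s_k\in\sad_d(f_k)$ with $x_k\to p$, whence $p\in\sad_d^*(f)$ and therefore $\cl(\sink(f))\setminus\sink(f)\subset\sad_d^*(f)$.
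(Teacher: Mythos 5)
Your reduction is the same as the paper's (sinks $s_k\to p$ with periods $n_k\to\infty$ by Kupka--Smale, then Franks's Lemma \ref{frank} reduces everything to an $\epsilon$-perturbation of the linear cocycle $Df$ along the orbit of a sink of large period), but the core of the lemma is precisely the step you flag as ``the main obstacle'' and then grant. In the regime where the spectral radius of $A_k=Df^{n_k}(s_k)$ decays like $e^{-\mu_0 n_k}$ with $\mu_0>0$ but the cocycle is not uniformly contracting in bounded time, your ``Pliss-type selection plus small realigning rotations'' is only a heuristic: nothing in the proposal shows that the rotations can be taken of size $\to 0$ (or even $<\epsilon$) while actually producing an eigenvalue of modulus $\ge 1$, and the dichotomy you invoke (either uniform finite-time contraction of $\cl\bigl(\bigcup_k\mathrm{orb}(s_k)\bigr)$, or a definite gap between the averaged $\log\|Df\|$ and the top Lyapunov exponent) is asserted, not proved. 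This missing step is exactly the content of the theorem of Pliss \cite{Pl} that the paper cites as its key input: Pliss gives, for any sink of sufficiently high period, linear maps $L_i$ with $\|L_i-Df(f^i(x))\|\le\epsilon$ whose product has an eigenvalue of modulus $1$. The paper then interpolates $L_i^t=(1-t)Df(f^i(x))+tL_i$, applies Franks's Lemma to get a family $g_t$, and picks a boundary parameter $t_0$ of $\{t:x\in\sink(g_t)\}$ to land on eigenvalues of modulus $\le 1$ with at least one on the unit circle, before a final push into $\sad_d$. Without Pliss (or a complete substitute for it), your proof is incomplete at its central point.

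There is a second concrete gap, even in your ``easy'' regime $\mu_k\to 0$: when $A_k$ has non-real eigenvalues (spiral sinks, which are the most relevant case in this paper), there is no dominant eigendirection to conjugate onto, and inserting an $n_k$-th root of a scalar amplification moves \emph{both} eigenvalues across the unit circle together, yielding a source or an elliptic-type point rather than a saddle. Some additional perturbation is needed to make the eigenvalues real and split them while keeping $|\det|<1$; the paper handles exactly this situation by distinguishing, at the parameter $t_0$, the case of two complex eigenvalues of modulus $1$ (resolved via the results of Mart\'{i}n--Mora \cite{mamo}, producing a dissipative saddle near $x$) from the real case. Your proposal never addresses the complex case. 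A minor further inaccuracy: the claim that the determinant condition ``persists under any small perturbation'' is false over $n_k$ steps (an $\epsilon$-change per step can multiply the determinant by $(1+C\epsilon)^{n_k}$); it is only controlled for perturbations chosen with the determinant in mind, as in the paper's explicit constructions.
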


\begin{proof}
Take $p\in \cl(\sink(f))\setminus \sink(f)$. Take arbitrary neighborhoods $W(f)$ and $U$ of $f$ and $p$ respectively. 
Without loss of generality we can assume $W(f)\subset K(f)$.
Put this $W(f)$ in the Franks lemma to obtain the neighborhood $W_0(f)\subset W(f)$ of $f$ and $\epsilon>0$.

As $p\in Cl(\sink(f))\setminus \sink(f)$,
there is a sequence of sinks converging to $p$.

Since $f$ is Kupka-Smale, the associated sequence of periods must be unbounded.

Then, we can apply
a result of Pliss \cite{Pl} in order to choose $x\in \sink(f)\cap U$ and
a sequence of linear isomorphisms $L_i: T_{f^i(x)}M\to T_{f^{i+1}(x)}M$
with $\|L_i-Df(f^i(x))\|\leq \epsilon$, for every $0\leq i\leq n_x-1$, such that
$\displaystyle\prod_{i=0}^{n_x-1}L_i$ has an eigenvalue of modulus $1$.

For all $0\leq t\leq 1$ and $0\leq i\leq n_x-1$ we define $L_i^t: T_{f^i(x)}M\to T_{f^{i+1}(x)}M$ by
$$
L_i^t=(1-t)Df(f^i(x))+tL_i.
$$
Since
$$
\|L_i^t-Df(f^i(x))\|=t\|L_i-Df(f^i(x))\|\leq \epsilon,
\quad\quad\forall 0\leq t\leq 1,0\leq i\leq n_x-1,
$$
and $h=f\in W_0(f)$, we can apply the Franks lemma to
$x_i=f^i(x)$, $0\leq i\leq n_x-1$, in order to obtain
a one parameter family of surface diffeomorphisms
$g_t\in W(f)$, with $0\leq t\leq 1$ and $0\leq i\leq 1$, such that $g_t=f$ along the orbit of $x$ under $f$
(thus $x$ is a periodic point of $g_t$ with $n_{x,g_t}=n_x$)
and $Dg_t(g^i_t(x))=L_i^t$ for all $0\leq t\leq 1$ and $0\leq i\leq n_x-1$.
Consequently,
$$
Dg^{n_{x,g_t}}_t(x)=\displaystyle\prod_{i=0}^{n_x-1}L_i^t,
\quad\quad\forall 0\leq t\leq 1.
$$
As $L_i^t$ depends continuously on $t\in [0,1]$, for every $0\leq i\leq n_x-1$, the above identity implies that the curve of linear operators
$$
\mathcal{C}(t)=Dg^{n_{x,g_t}}_t(x), \quad\quad0\leq t\leq 1,
$$
is continuous.
As $Dg_0^{n_{x,g_0}}(x)=Df^{n_x}(x)$ and $Dg_1^{n_{x,g_1}}(x)=\displaystyle\prod_{i=1}^{n_x-1}L_i$,
we have $x\in\sink(g_0)$ and $x\not\in \sink(g_1)$.
It follows that
$$
O=\{t\in [0,1]:x\in \sink(g_t)\}.
$$
is a nonempty proper subset of $[0,1]$.
Furthermore, the continuity of $\mathcal{C}$ implies that $O$ is open in $[0,1]$, and so,
a countable union of open intervals.
Take any boundary point $t_0$ of one of these intervals.

The continuity of $\mathcal{C}$ implies that the eigenvalues of $Dg_{t_0}^{n_{x,g_{t_0}}}(x)$ have
modulus $\leq1$.

If both eigenvalues have modulus less than $1$, the same continuity implies that
$t_0$ is an interior point of $O$ which is absurd.
Then, at least one of these eigenvalues have modulus $1$.

If both eigenvalues are complex of modulus $1$, then we can apply the results in p. 1243 of \cite{mamo}
in order to find a surface diffeomorphism $h_1$ close to $g_{t_0}$ (and thus within $W(f)$)
having a dissipative saddle close to $x$ (and thus within $U$). From this we get $\sad_d(h_1)\cap U\neq\emptyset$.

If both eigenvalues are real with modulus $1$, or if 
the two eigenvalues have modulus less than and equal to $1$ respectively,
then we can easily find a surface diffeomorphism $h_2$ close to $g_{t_0}$ (and thus within $W(f)$) satisfying $x\in \sad_d(h_3)$.

As $W(f)$ and $U$ were chosen arbitrary, we conclude that $p\in \sad^*_d(f)$ proving the result.
\end{proof}

\subsection{Finiteness of dissipative attractors and dominated splittings}
The following lemma about dominated splittings will be useful to prove finiteness of dissipative non-trivial hyperbolic attractors.
Its proof uses an argument given in the Araujo's thesis \cite{A} (which was outlined in the proof of Corollary 3.3 p.981 in \cite{PS}).

\begin{lemma}
\label{stable-manifold}
Let $\Lambda$ be a compact invariant set with a dominated splitting $T_\Lambda M=E_\Lambda\oplus F_\Lambda$
of a surface diffeomorphism $f$.
Then, for every $p\in \Lambda$ exhibiting $0<\gamma<1$ such that
\begin{equation}
\label{ali}
\displaystyle\prod_{i=0}^{l-1}\|Df(f^i(p))/E_{f^i(p)}\|\leq\gamma^l,
\quad\quad\forall l\in \mathbb{N}^+
\end{equation}
there are $\epsilon>0$ and a $C^1$ embedding $\phi: [-\epsilon,\epsilon]\to M$ with $\phi(0)=p$
such that the $C^1$ submanifold $W_\epsilon(p)=\phi([-\epsilon,\epsilon])$ satisfies the following properties:
\begin{enumerate}
\item
$T_p W_\epsilon(p)=E_p$;
\item
there is $N\in\mathbb{N}^+$ such that
$$
\lim_{n\to\infty}d(f^{nN}(x),f^{nN}(p))=0,
\quad\quad\forall x\in W_\epsilon(p).
$$
\end{enumerate}
\end{lemma}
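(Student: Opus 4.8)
The plan is to realise $W_\epsilon(p)$ as a small ``centre--stable'' plaque at $p$ produced by the domination, and then to read off the asymptotic behaviour in the second conclusion directly from the hypothesis~(\ref{ali}); this is essentially the argument of Araujo's thesis \cite{A}, outlined in the proof of Corollary~3.3 of \cite{PS}. First I would pass to a fixed high iterate $f^N$, choosing $N\in\mathbb N^+$ so large that $f^N$ has very strong domination on $\Lambda$ (e.g., after extending $E$ and $F$ continuously to cone fields $C_a^E$, $C_a^F$ on a neighbourhood $U$ of $\Lambda$, one has $Df^{-N}(x)\,C_a^E(x)\subset C_{a/2}^E(f^{-N}(x))$ whenever $x,f^{-N}(x)\in U$). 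Passing to $f^N$ costs nothing, since by submultiplicativity of $\|\cdot\|$ along the $Df$-invariant bundle $E$ and by (\ref{ali}),
\[
\prod_{j=0}^{m-1}\big\|Df^N(f^{jN}(p))/E_{f^{jN}(p)}\big\|\;\le\;\prod_{i=0}^{mN-1}\big\|Df(f^{i}(p))/E_{f^{i}(p)}\big\|\;\le\;\gamma^{mN},\qquad\forall m\in\mathbb N^+ ,
\]
so the contraction of $E$ along $\mathrm{orb}(p)$ persists for $f^N$ with ratio $\gamma^N<1$, and in fact every partial product is $\le\gamma^{mN}<1$.

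Next I would invoke the plaque family theorem. The dominated bundle $E$ admits (Hirsch--Pugh--Shub \cite{hps}; see also \cite{A}, \cite{PS}) a continuous family $\{\mathcal W(x)\}_{x\in\Lambda}$ of $C^1$ embedded arcs of intrinsic diameter at most $r$, each centred at $x$, with $x\in\mathcal W(x)$ and $T_x\mathcal W(x)=E_x$, which is locally invariant under $f^N$: there is $0<\rho_0<r/2$ with $f^N(\mathcal W_{\rho_0}(x))\subset\mathcal W(f^N(x))$ for every $x\in\Lambda$, where $\mathcal W_{\rho_0}(x)$ denotes the sub-arc of $\mathcal W(x)$ at intrinsic distance $\le\rho_0$ from $x$. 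This is the only step that genuinely uses the geometry of the dominated splitting (through the cone fields above, which is also why $N$ is taken large), and it does not use (\ref{ali}). Since $T_x\mathcal W(x)=E_x$ exactly and $Df^N$ together with the tangent line field of the plaques is continuous on the compact set $\Lambda$, there is a modulus $\delta=\delta(r)\to0$ as $r\to0$ with
\[
\big\|Df^N(y)/T_y\mathcal W(x)\big\|\;\le\;(1+\delta)\,\big\|Df^N(x)/E_x\big\| \qquad\text{for all }x\in\Lambda,\ y\in\mathcal W_{\rho_0}(x).
\]
I would then shrink $r$ (hence $\delta$ and $\rho_0$) once and for all so that $\mu:=(1+\delta)\gamma^N<1$, set $\epsilon:=\rho_0$, and let $W_\epsilon(p)$ be the sub-arc $\mathcal W_{\epsilon}(p)$, reparametrised by an arc-length $C^1$ embedding $\phi:[-\epsilon,\epsilon]\to M$ with $\phi(0)=p$; the first conclusion $T_pW_\epsilon(p)=E_p$ is then immediate.

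For the second conclusion, fix $x\in W_\epsilon(p)$. I would prove by induction on $m\ge0$ that $f^{mN}(x)\in\mathcal W_{\rho_0}(f^{mN}(p))$ and that $\ell_m\le\mu^{m}\epsilon$, where $\ell_m$ is the length of the sub-arc of $\mathcal W(f^{mN}(p))$ joining $f^{mN}(p)$ to $f^{mN}(x)$. The case $m=0$ is just $x\in\mathcal W_{\epsilon}(p)$. Assuming the claim for $m$: local invariance gives $f^{(m+1)N}(x)\in f^N(\mathcal W_{\rho_0}(f^{mN}(p)))\subset\mathcal W(f^{(m+1)N}(p))$, and since $f^N$ carries the sub-arc of $\mathcal W(f^{mN}(p))$ from $f^{mN}(p)$ to $f^{mN}(x)$ onto the corresponding sub-arc of $\mathcal W(f^{(m+1)N}(p))$, the displayed bound on $\|Df^N/T\mathcal W\|$ gives $\ell_{m+1}\le(1+\delta)\|Df^N(f^{mN}(p))/E_{f^{mN}(p)}\|\,\ell_m$; iterating and using $\ell_0\le\epsilon$ together with the product estimate of the first paragraph,
\[
\ell_{m+1}\;\le\;(1+\delta)^{m+1}\prod_{j=0}^{m}\big\|Df^N(f^{jN}(p))/E_{f^{jN}(p)}\big\|\;\epsilon\;\le\;(1+\delta)^{m+1}\gamma^{(m+1)N}\epsilon\;=\;\mu^{m+1}\epsilon\;\le\;\epsilon\;=\;\rho_0 ,
\]
so in particular $f^{(m+1)N}(x)\in\mathcal W_{\rho_0}(f^{(m+1)N}(p))$, which closes the induction. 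Since $d(f^{mN}(x),f^{mN}(p))\le\ell_m\le\mu^{m}\epsilon$ and $\mu<1$, this yields $\lim_{m\to\infty}d(f^{mN}(x),f^{mN}(p))=0$, which is the second conclusion with this $N$.

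The hard part will be the plaque family step: producing a continuous, locally $f^N$-invariant family of $C^1$ arcs tangent to the merely \emph{dominated} subbundle $E$, which is not assumed to be contracted over all of $\Lambda$. That is the technical heart, and there the domination (not (\ref{ali})) is what is used; I would either cite the Hirsch--Pugh--Shub plaque family theorem directly or reproduce the short graph-transform construction along $\mathrm{orb}(p)$ as in \cite{A}, \cite{PS}. Everything after that is soft: hypothesis (\ref{ali}) enters only through the fact that the running products $\prod_{i=0}^{l-1}\|Df(f^i(p))/E_{f^i(p)}\|$ never exceed $\gamma^{l}<1$, which is exactly what makes the induction above close without any appeal to hyperbolic times.
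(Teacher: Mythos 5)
Your proposal is correct and follows essentially the same route as the paper's proof: both pass to a power $f^N$, invoke the locally invariant $C^1$ plaque family tangent to $E$ furnished by the dominated splitting (Hirsch--Pugh--Shub/Ma\~n\'e), and then run an induction that combines the product estimate from (\ref{ali}) with a $(1+c)$-type continuity comparison between $\|Df^N\|$ along the plaques and along $E$ over the orbit of $p$ to get exponential decay of the plaque lengths, hence conclusion (2). The only differences are presentational (you track intrinsic sub-arc lengths directly, while the paper tracks the length of the full iterated curve and verifies containment in the half-plaques via an auxiliary shadowing step), so there is nothing to fix.
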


\begin{proof}
Denote by $\emb^1_\Lambda(I,M)$ the space of $C^1$ embeddings $\beta:I=[-1,1]\to M$ satisfying $\beta(0)\in\Lambda$.
We equipp $\emb^1_\Lambda(I,M)$ with the $C^1$ topology.
It follows easily that $\emb^1_\Lambda(I,M)$ is a firber bundle over $\Lambda$
with projection $\pi(\beta)=\beta(0)$.

By well known properties of dominated splittings  (\cite{hps} or the proof of Proposition 2.3 p.386 in \cite{MM})
there is a section $\phi:\Lambda\to \emb^1_\Lambda(I,M)$ such that
defining
$W_\epsilon(x)=\phi(x)([-\epsilon,\epsilon])$, $\forall 0<\epsilon\leq 1$, we have the following properties for all $x\in \Lambda$:

\begin{itemize}
\item
$T_xW_1(x)=E_x$;
\item
There is a power $g=f^N$ of $f$ such that for every $0<\epsilon_1<1$ there is $0<\epsilon_2<1$ satisfying
$
g(W_{\epsilon_2}(x))\subset W_{\epsilon_1}(g(x)).
$
\end{itemize}

The chain rule and (\ref{ali}) imply
\begin{equation}
\label{nova}
\displaystyle\prod_{i=0}^{l-1}\|Dg(g^i(p))/E_{g^i(p)}\|\leq(\gamma^{N})^l,
\quad\quad\forall l\in \mathbb{N}^+.
\end{equation}

Fix $0<\epsilon_2<1$ such that
$$
g(W_{\epsilon_2}(z))\subset W_{\frac{1}{2}}(g(z)),\quad\quad\forall z\in \Lambda.
$$
Since $\phi$ is a section (in particular continuous), we get easily
that there is $\delta>0$ such that
$W_1(z)\cap B_\delta(z)\subset W_{\epsilon_2}(z)$ for all $x\in \Lambda$.

\vspace{5pt}

Let us prove that every $x\in\Lambda$, $y\in W_{\frac{1}{2}}(x)$  and $n\in\mathbb{N}$
satisfying
$d(g^j(x),g^j(y))\leq\delta$, $\forall 0\leq j\leq n$, also satisfy
\begin{equation}
\label{aline}
g^j(y)\in W_{\frac{1}{2}}(g^j(x)), \quad\quad\forall 0\leq j\leq n.
\end{equation}

Indeed, we have that $y\in W_1(x)\cap B_\delta(x)$ so $y\in W_{\epsilon_2}(x)$ thus $g(y)\in W_{\frac{1}{2}}(g(x))$.
Suppose $g^j(y)\in W_{\frac{1}{2}}(g^j(x))$ for some $0\leq j\leq n-1$.
Then, $g^j(y)\in W_1(g^j(x))\cap B_\delta(g^j(x))$ so $g^j(y)\in W_{\epsilon_2}(g^j(x))$ thus
$g^{j+1}(y)\in W_{\frac{1}{2}}(g^{j+1}(x))$. Therefore, we get (\ref{aline}) by induction.

\vspace{5pt}

Let us continue with the proof.

Denote $\tilde{E}(z)=T_zW_{\frac{1}{2}}(x)$, $\forall x\in\Lambda$ and $z\in W_{\frac{1}{2}}(x)$.
Shrinking $\delta$ if necessary we can assume
\begin{equation}
\label{ganzo}
\| Dg(z_1)/\tilde{E}_{z_1}\|\leq (1+c)\|Dg( z_2)/\tilde{E}_{z_2}\|
\end{equation}
for every $z_1,z_2$ with $d(z_1,z_2)<\delta$.

Take $0<\epsilon<\frac{1}{2}$ small enough so that
$\it{l}(W_\epsilon(x))<\delta$, for all $x\in \Lambda$, where $\it{l}(\cdot)$ denotes the length operation.

Fix $c>0$ such that $\gamma_1=(1+c)\gamma^N<1$.

We claim that $p$ as in (\ref{nova}) satisfies
\begin{equation}
\label{gato}
(a)\quad\it{l}(g^j(W_\epsilon(p)))\leq \gamma_1^j\it{l}(W_\epsilon(p))
\mbox{ and } (b)\quad
g^j(W_\epsilon(p))\subset W_{\frac{1}{2}}(g^j(p)),
\quad\forall j\in \mathbb{N}.
\end{equation}
Indeed,
the assertion is trivial for $j=0$. Now,
assume by induction that there is $n\in\mathbb{N}^+$ such that the assertion is true for
every $0\leq j\leq n$.

Note that
\begin{equation}
\label{aldro}
\it{l}(g^{n+1}(W_\epsilon(p)))\leq
\sup\{\|Dg^{n+1}(z)/\tilde{E}_z\|:z\in W_\epsilon(p)\}\it{l}(W_\epsilon(p))
=
\end{equation}
$$
\sup\left\{
\prod_{j=0}^{n}\|Dg(g^j(z))/\tilde{E}_{g^j(z)}\|:z\in W_\epsilon(p)\right\}\it{l}(W_\epsilon(p)).
$$
Moreover, if $z\in W_\epsilon(p)$, then $g^j(z)$ belongs to the curve
$g^j(W_\epsilon(p))$ which, in turns, contains $g^j(p)$ and has length $\leq \gamma_1^j\delta<\delta$
for all $0\leq j\leq n$ by the induction hypothesis.
Therefore, $d(g^j(z),g^j(p))<\delta$ for $0\leq j\leq n$ thus (\ref{ganzo}) yields
$$
\|Dg(g^j(z))/\tilde{E}_{g^j(z)}\|\leq(1+c)\|Dg(g^j(p))/\tilde{E}_{g^j(p)}\|, \quad\quad\forall 0\leq j\leq n.
$$
Replacing in (\ref{aldro}) and using (\ref{nova}) we get
$$
\it{l}(g^{n+1}(W_\epsilon(p)))\leq
(1+c)^{n+1}\left(\prod_{j=0}^n\|Dg(g^j(p))/E_{g^j(p)}\|\right)\it{l}(W_\epsilon(p))\leq
$$
$$
((1+c)\gamma^N)^{n+1}\it{l}(W_\epsilon(p))=\gamma_1^{n+1}\it{l}(W_\epsilon(p)).
$$

\vspace{5pt}

This proves (\ref{gato})-(a) for $n+1$ and, in particular, $\it{l}(g^{n+1}(W_\epsilon(p)))<\delta$.

Since $g^{n+1}(W_\epsilon(p))$ is a curve containing
both $g^{n+1}(z)$ and $g^{n+1}(p)$ we conclude that
$d(g^{n+1}(z),g^{n+1}(p))\leq \delta$, $\forall z\in W_\epsilon(p)$.

But we also have
$d(g^j(z),g^j(p))\leq\delta$ ($\forall z\in W_\epsilon(p)$ and $0\leq j\leq n$) by induction. So,
$d(g^j(z),g^j(p))\leq\delta$, $\forall z\in W_\epsilon(p)$ and $0\leq j\leq n+1$.

Since $0<\epsilon<\frac{1}{2}$, we also have $z\in W_{\frac{1}{2}}(p)$ for all $z\in W_\epsilon(p)$. Therefore
$g^{n+1}(z)\in W_{\frac{1}{2}}(g^{n+1}(p))$, $\forall z\in W_\epsilon(p)$, by (\ref{aline}).
This proves (\ref{gato})-(b) for $n+1$, and so, (\ref{gato}) holds by induction.

Now we observe that (\ref{gato}) implies
the second conclusion of the lemma whereas the first is trivial.
This ends the proof. 
\end{proof}

We will need the Pliss's Lemma (c.f. Lemma 3.0.1 in \cite{PS}).

\begin{lemma}[Pliss's Lemma]
\label{pliss}
For every $f\in \diff^1(M)$ and $0<\gamma_1<\gamma_2$ there are
$m\in \mathbb{N}^+$ and $c>0$ such that for any $x\in M$, any subspace $S\subset T_xM$ and any integer $n\geq m$
satisfying
$$
\prod_{i=1}^n\|Df(f^i(x))/S_i\|\leq \gamma_1^n,
$$
with $S_i=Df^i(x)(S)$,
there are $0\leq n_1<n_2<\cdots<n_l\leq n$ with $l\geq cn$ such that
$$
\prod_{i=n_r}^j\|Df(f^i(x))/S_i\|\leq \gamma_2^{j-n_r},
\quad \forall r=1,\cdots,l,\forall j= n_r,\cdots,n.
$$
\end{lemma}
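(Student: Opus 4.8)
The plan is to pass to logarithms, which converts the statement into a purely combinatorial estimate about finite sequences of uniformly bounded reals, and then to establish that estimate by a short counting argument (this is, in essence, the classical argument of Pliss; cf.\ \cite{PS}).

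First I would fix $f\in\diff^1(M)$ and $0<\gamma_1<\gamma_2$, assuming $\gamma_2<1$ (the only case used in this paper; the general case needs only cosmetic changes). For $x\in M$, a subspace $S\subset T_xM$ and $i\ge1$, put $a_i=\log\|Df(f^i(x))/S_i\|$ with $S_i=Df^i(x)(S)$. Compactness of $M$ together with invertibility of $Df$ gives a constant $A=A(f)>0$, independent of $x,S,i$, with $|a_i|\le A$: indeed $a_i\le\log\sup_M\|Df\|$, and $\|Df(v)\|\ge\|v\|/\sup_M\|Df^{-1}\|$ gives $a_i\ge-\log\sup_M\|Df^{-1}\|$. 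The hypothesis becomes $\sum_{i=1}^{n}a_i\le n\log\gamma_1$, and it is enough to find at least $cn$ indices $k$ such that
\[
\sum_{i=k}^{j}a_i\le (j-k+1)\log\gamma',\qquad\forall\,j\in\{k,\dots,n\},
\]
for a fixed $\gamma'$ with $\gamma_1<\gamma'<\gamma_2$: since $\log\gamma'<0$ one checks at once that $(j-k+1)\log\gamma'\le(j-k)\log\gamma_2$ for every $j\ge k$, so any such $k$ is among the $n_r$ required by the statement.

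Next I would set $g=\log\gamma'$, $\hat a_i=a_i-g$ and $\kappa=g-\log\gamma_1>0$, so that $|\hat a_i|\le A+|g|=:A''$ and $\sum_{i=1}^{n}\hat a_i\le-\kappa n$; if $\hat a_i\le0$ for all $i$ then every $k$ works, so I may assume some $\hat a_i>0$. Forming the partial sums $\sigma_0=0$, $\sigma_m=\sum_{i=1}^{m}\hat a_i$, one has $\sigma_n\le-\kappa n<0=\sigma_0$, and I would call $m\in\{0,\dots,n-1\}$ a \emph{string time} when $\sigma_m=\max_{m\le j\le n}\sigma_j$; equivalently, with $k=m+1$, $\sum_{i=k}^{j}\hat a_i\le0$ for all $j\in\{k,\dots,n\}$, which is exactly the displayed estimate. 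The whole matter then reduces to bounding from below the number $l$ of string times, and for this I would use the running maximum $G(m)=\max_{m\le j\le n}\sigma_j$. Since $G$ is non-increasing in $m$ with $G(0)\ge0$ and $G(n)\le-\kappa n$, it decreases by at least $\kappa n$ in total; and from $G(m)=\max(\sigma_m,G(m+1))$ one sees that $G(m)=G(m+1)$ unless $\sigma_m>G(m+1)$, in which case $m$ is a string time and $0<G(m)-G(m+1)\le\sigma_m-\sigma_{m+1}=-\hat a_{m+1}\le A''$. Hence $G$ descends only at string times, each time by at most $A''$, giving $\kappa n\le G(0)-G(n)\le A''\,l$, i.e.\ $l\ge(\kappa/A'')\,n$; I would then take $c=\kappa/A''$, $m=1$, and relabel the string times as $0\le n_1<\dots<n_l\le n$.

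I do not expect a serious obstacle here: the delicate part is only the bookkeeping in the reduction, namely that the natural normalization produces the exponent $j-k+1$ rather than the $j-k$ of the statement, which is the reason for introducing the auxiliary ratio $\gamma'$, while the combinatorial heart, that the running maximum $G$ can fall only at string times and only by a bounded amount, is entirely elementary.
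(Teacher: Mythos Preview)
The paper does not actually prove this lemma: it merely states it with a reference to Lemma~3.0.1 of \cite{PS}, so there is no ``paper's own proof'' to compare against. Your argument is the classical Pliss argument and is correct. The reduction to the combinatorial statement via logarithms is clean, the introduction of the auxiliary $\gamma'\in(\gamma_1,\gamma_2)$ to absorb the off-by-one in the exponent is the right move (and you correctly verify $(j-k+1)\log\gamma'\le(j-k)\log\gamma_2$ using $\gamma'<\gamma_2<1$), and the running-maximum bookkeeping with $G(m)=\max_{m\le j\le n}\sigma_j$ yields the density bound $l\ge(\kappa/A'')\,n$ exactly as you claim.

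One small slip in the final sentence: you write ``relabel the string times as $0\le n_1<\cdots<n_l\le n$'', but what you actually want as the $n_r$ are the indices $k=m+1$ (your ``displayed estimate'' is stated for $k$, not for the string time $m$ itself). Taking $n_r=m$ would fail at $j=n_r$ because then the left-hand product is the single factor $\|Df(f^{n_r}(x))/S_{n_r}\|$, which need not be $\le 1$. With $n_r=k=m+1$ everything goes through, since your string-time condition gives $\hat a_k\le 0$ and hence that single factor is $\le\gamma'<1$. This is purely a labeling issue; the mathematics is already in place earlier in your write-up.
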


Combining the Pliss's Lemma with Lemma \ref{stable-manifold} we obtain the following result.

\begin{theorem}
\label{finiteness-attractors}
The number of dissipative non-trivial hyperbolic attractors contained in a compact invariant set with
a dominated splitting of a Kupka-Smale surface diffeomorphism is finite.
\end{theorem}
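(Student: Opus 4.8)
The plan is to argue by contradiction, supposing a Kupka--Smale surface diffeomorphism $f$ has a compact invariant set $\Lambda$ with dominated splitting $T_\Lambda M=E_\Lambda\oplus F_\Lambda$ containing infinitely many pairwise distinct dissipative non-trivial hyperbolic attractors $A_1,A_2,\dots$. For each $n$ fix a dissipative periodic saddle $p_n\in A_n$ (for instance one with $A_n=H_f(p_n)$), of period $m_n$ and eigenvalues $\lambda^s_n,\lambda^u_n$ with $|\lambda^s_n\lambda^u_n|<1$. Since $A_n$ is a hyperbolic attractor, its hyperbolic splitting $E^s_n\oplus E^u_n$ (with $E^u_n$ tangent to $A_n$) is a dominated splitting on $A_n$, so by uniqueness of dominated splittings of a given index it coincides with $E_\Lambda\oplus F_\Lambda$ over $A_n$; in particular $E_x=E^s_{n,x}$ for $x\in A_n$.

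The first step produces, with a rate uniform in $n$, a point $q_n$ in the orbit of $p_n$ satisfying the hypothesis (\ref{ali}) of Lemma \ref{stable-manifold}. Let $N$ be a domination period of $\Lambda$: $\|Df^N(y)/E_y\|\le\tfrac12\|Df^N(y)/F_y\|$ for all $y\in\Lambda$. As $E,F$ are one-dimensional, along the periodic orbit of $p_n$ one has $\prod_{i=0}^{m_n-1}\|Df(f^i(p_n))/E_{f^i(p_n)}\|=|\lambda^s_n|$ and $\prod_{i=0}^{m_n-1}\|Df(f^i(p_n))/F_{f^i(p_n)}\|=|\lambda^u_n|$; domination then gives $|\lambda^s_n|/|\lambda^u_n|\le 2^{-m_n/N}$, and with $|\lambda^s_n\lambda^u_n|<1$ this forces $|\lambda^s_n|<2^{-m_n/2N}$, hence
\[
\Big(\prod_{i=0}^{m_n-1}\|Df(f^i(p_n))/E_{f^i(p_n)}\|\Big)^{1/m_n}=|\lambda^s_n|^{1/m_n}<2^{-1/2N}.
\]
Fixing any $\gamma\in(2^{-1/2N},1)$ independent of $n$, and applying to the periodic orbit of $p_n$ the standard selection argument underlying Pliss's Lemma (Lemma \ref{pliss}) — pick the point where the running sum of $\log\|Df(f^i(\cdot))/E\|-\log\gamma$ is maximal — yields $q_n$ in that orbit with $\prod_{i=0}^{l-1}\|Df(f^i(q_n))/E_{f^i(q_n)}\|\le\gamma^l$ for every $l\ge1$, the same $\gamma$ serving all $n$.

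Now $q_n\in\Lambda$ satisfies (\ref{ali}) with the uniform constant $\gamma$, so Lemma \ref{stable-manifold} applies. In its proof the power $N$, the constants $c,\delta$ and the size $\epsilon$ depend only on $\Lambda$, $f$ and $\gamma$, and the section of $C^1$ arcs is continuous over $\Lambda$; hence we obtain a uniform $\epsilon>0$, a uniform $N$, and $C^1$ arcs $W_\epsilon(q_n)$ of length $\ge\rho>0$ (uniform), with $T_{q_n}W_\epsilon(q_n)=E_{q_n}$ and $d(f^{kN}(x),f^{kN}(q_n))\to0$ for all $x\in W_\epsilon(q_n)$. Consequently $\omega_f(x)\subset\{q_n,f(q_n),\dots\}\subset A_n$, so $W_\epsilon(q_n)\subset W^s_f(A_n)$. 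Since distinct attractors have disjoint open basins, the $W_\epsilon(q_n)$ lie in pairwise disjoint open sets $W^s_f(A_n)$.

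It remains to derive a contradiction, and this is the delicate step: pairwise disjoint arcs with base points in the compact set $\Lambda$ need not collide, so one must use the dynamics, not only the geometry. My plan is to thicken each $W_\epsilon(q_n)$ into a two-dimensional subset of $W^s_f(A_n)$ of Lebesgue area bounded below uniformly in $n$. One may assume $A_n\neq M$ (the Anosov case gives a single attractor), so $A_n$ has zero area and $\mathrm{area}(f^k(U_n))\downarrow0$ for a trapping neighborhood $U_n$, whence by Jensen's inequality $\int\log|\det Df|\,d\mu_n\le0$ for the SRB measure $\mu_n$ of $A_n$; together with the integrated domination inequality and the vanishing of the angle term (angles between $E,F$ are bounded away from $0$ on $\Lambda$), this yields the uniform bound $\lambda^s(\mu_n)\le-\tfrac{\log2}{2N}$. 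Pliss's Lemma then gives a uniform positive density of $\gamma$-hyperbolic times along $\mu_n$-typical orbits, so the set $\Gamma_n\subset A_n$ of points satisfying (\ref{ali}) with constant $\gamma$ has $\mu_n(\Gamma_n)\ge c>0$ uniformly; saturating a length-$\ge1$ unstable sub-arc of $A_n$ (such an arc exists since unstable manifolds of $q_n$ lie in $A_n$ and expand) by the uniform stable arcs $W_\epsilon(z)$, $z\in\Gamma_n$, produces inside $W^s_f(A_n)$ a flow box of area $\ge\kappa>0$ independent of $n$; as the basins are disjoint and $\mathrm{area}(M)<\infty$, only finitely many $n$ occur, a contradiction. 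I expect the measure-theoretic bookkeeping here — absolute continuity of $\mu_n$ along unstables and, above all, the uniformity of the flow-box area — to be the main obstacle; a softer alternative is to pass to a $C^1$ limit $W_\epsilon(q_*)$ of the arcs, observe it must avoid every $W^s_f(A_m)$, and contradict this via Lyapunov-stability properties of Hausdorff limits of the $A_n$, but establishing those without genericity is itself the difficulty.
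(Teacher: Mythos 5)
Your first half runs parallel to the paper's argument (uniqueness of the dominated splitting at the saddles, the estimate $|\lambda^s_n|\le\gamma^{m_n}$ from domination plus dissipativity, a Pliss selection giving points $q_n$ satisfying (\ref{ali}) with a $\gamma$ independent of $n$, and stable arcs from Lemma \ref{stable-manifold}), and those steps are essentially sound, modulo small points you gloss over: the constant coming from periods not divisible by the domination period is absorbed exactly because Kupka--Smale forces $m_n\to\infty$, and Lemma \ref{stable-manifold} as stated gives an $\epsilon$ depending on the point, so the uniformity you invoke would have to be extracted from its proof. But the step where you must actually produce a contradiction is exactly where your proposal has a genuine gap, and you say so yourself. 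The flow-box/area scheme is not available here: the diffeomorphism is only $C^1$, so SRB measures for the hyperbolic attractors $A_n$ and absolute continuity of their stable/unstable laminations (the ingredients behind ``saturating an unstable arc by stable arcs gives a set of definite area'') cannot be invoked; moreover the hyperbolicity constants of the $A_n$ are not uniform in $n$, so there is no uniform lower bound on the length of unstable arcs inside $A_n$, hence no uniform $\kappa>0$ for your flow boxes. The ``softer alternative'' you mention (limits of arcs plus Lyapunov stability of Hausdorff limits) is also unavailable: neutrality of homoclinic classes (Lemma \ref{homoneutral}) is a $C^1$-generic fact, while the theorem is stated for an arbitrary Kupka--Smale $f$.

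The idea you are missing is that one should not distribute one stable arc per attractor and then try to make them collide by measure; instead, pass to a limit point $p$ of the Pliss points $p_k\in A_k$, transfer the estimate (\ref{ali}) to $p$, and build the \emph{single} stable arc $W_\epsilon(p)$ there. In a small product box $V$ around $p$ adapted to $E_p\oplus F_p$, the connected component $C_k$ of $W^u(p_k)\cap V$ through $p_k$ is a curve contained in $A_k$ (unstable manifolds of points of an attractor lie in the attractor), everywhere tangent to $F$, and it cannot terminate in the interior of $V$ (it could be continued inside $A_k$), so for large $k$ it crosses $V$ from top to bottom and must intersect $W_\epsilon(p)$. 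Hence two distinct attractors $A_{k_1},A_{k_2}$ meet $W_\epsilon(p)$; since every $z\in W_\epsilon(p)$ satisfies $\omega_{f^N}(z)=\omega_{f^N}(p)$, one gets $\emptyset\neq\omega_{f^N}(p)\subset A_{k_1}\cap A_{k_2}$, contradicting disjointness of distinct attractors. This topological crossing argument needs no uniform hyperbolicity across the $A_k$, no measures, and no genericity, which is why the paper's proof closes where yours stalls.
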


\begin{proof}
Assume by contradiction that
there is compact invariant set $\Lambda$
with a dominated splitting
$T_\Lambda M=E_\Lambda\oplus F_\Lambda$, of a Kupka-Smale diffeomorphism $f$,
containing an infinite sequence of dissipative non-trivial hyperbolic attracttors $A_k$
of $f$

Let $K$ and $0<\lambda<1$ the constants associated to the splitting,
and $x_k\in \sad_d(f)$ be a sequence satisfying $A_k=H_f(x_k)$, $\forall k\in\mathbb{N}$.

It follows from the unicity of dominated splittings that
$E_{x_k}\oplus F_{x_k}=E^s_{x_k}\oplus E^u_{x_k}$ for every $k$.
From this we obtain
$$
\frac{|\lambda(x_k,f)|}{|\sigma(x_k,f)|}=\frac{\|Df^{n_{x_k}}(x_k)/E_{x_k}\|}{\|Df^{n_{x_k}}(x_k)/F_{x_k}\|}
\leq K\lambda^{n_{x_k}},
\quad\quad\forall n\in \mathbb{N}.
$$
But $|\lambda(x_k,f)\sigma(x_k,f)|<1$ since $x_k\in\sad_d(f)$, so
$$
|\lambda(x_k,f)|^2=\frac{|\lambda(x_k,f)|}{|\sigma(x_k,f)|}|\lambda(x_k,f)\sigma(x_k,f)|
\leq K\lambda^{n_{x_k}},
$$
yielding
$$
|\lambda(x_k,f)|\leq K_0\gamma_0^{n_{x_k}},
\quad\quad\forall k\in\mathbb{N},
$$
where $K_0=\sqrt{K}$ and $\gamma_0=\sqrt{\lambda}$ (thus $K_0>0$ and $0<\gamma_0<1$).

Since $f$ is Kupka-Smale, we obtain that the period sequence $n_{x_k}$ is unbounded.

Therefore, up to passing to a subsequence if necessary,
we can assume $n_{x_k}\to\infty$.

Choose $\gamma_0<\gamma_1<1$. Since $n_{x_k}\to\infty$ we can choose $k_0\in\mathbb{N}$ so that
$(\gamma_0^{-1}\gamma_1)^{n_k}\geq K_0$ for every $k\geq k_0$.
Then, $K_0\gamma_0^{n_{x_k}}\leq \gamma_1^{n_k}$ yielding
\begin{equation}
\label{lambda}
|\lambda(x_k,f)|\leq \gamma_1^{n_{x_k}},
\quad\quad\forall k\geq k_0.
\end{equation}

Fix $0<\gamma_1<\gamma_2<1$.
Choose $N\in\mathbb{N}^+$ and $c>0$ as in the Pliss's Lemma.
Clearly there is an integer $k_1>k_0$ such that $n_{x_k}\geq N$ for every $k\geq k_1$.
Denoting $S_i^k=Df^i(x_k)(E^s_{x_k})=E^s_{f^i(x_k)}$ we obtain from (\ref{lambda}) and the chain rule that
$$
\displaystyle\prod_{i=0}^{n_{x_k}-1}\|Df(f^i(x_k))/S^k_i\|\leq \gamma_1^{n_k},
\quad\quad\forall k\geq k_1.
$$
Then, by the Pliss's Lemma, for all $k\geq k_1$ we can select
positive integers $l_k\geq c\cdot n_{x_k}$ and
$0\leq n_1^k<n_2^k<\cdots<n_{l_k}^k\leq n_{x_k}$ such that
\begin{equation}
\label{infinity}
\displaystyle\prod_{i=n_r^k}^j
\|Df(f^i(x_k))/S^k_i\|\leq \gamma_2^{j-n^k_r},
\quad\forall k\geq k_1,\forall r=1,\cdots, l_k, \forall j=n_r^k,\cdots,n_{x_k}-1.
\end{equation}
By compactness we can assume $f^{n^k_1}(x_k)\to p$ (for some $p$) and so $E_{f^{n^k_1}(x_k)}=E^s_{f^{n^k_1}(x_k)}\to E_{p}$. 

We claim that
\begin{equation}
\label{infinityy}
\displaystyle\prod_{i=0}^{l-1}\| Df(f^i(p))/E_{f^i(p)}\|\leq K_2\gamma_2^l,
\quad\quad\forall l\in\mathbb{N},
\end{equation}
where $K_2=\gamma_2^{-1}$.

Indeed, fix $l\in\mathbb{N}^+$.
We clearly have $n_{x_k}-n_1^k\geq l_k$ so
$n_{x_k}-1-n_1^k\geq l_k-1\geq cn_{x_k}-1$.
As
$c>0$ we have $cn_{x_k}-1\to\infty$ whence $n_{x_k}-1-n_1^k\to\infty$ too.
Therefore, there is $k_l\in\mathbb{N}$ such that
for every $k\geq k_l$ one has
$l=j-n_1^k$ for some
$n_1^k\leq j\leq n_{x_k}-1$.

So, (\ref{infinity}) implies
$$
\gamma_2^l=\gamma_2^{j-n_1^k}\geq \displaystyle\prod_{i=n_1^k}^j\|Df(f^i(x_k))/S_i^k\|
=
\displaystyle\prod_{i=n_1^k}^{l+n_1^k}\|Df(f^i(x_k))/E^s_{f^i(x_k)}\|
=
$$
$$
\displaystyle\prod_{i=0}^l\|Df(f^i(f^{n_1^k}(x_k)))/E^s_{f^i(f^{n_1^k}(x_k))}\|,
$$
i.e.,
$$
\displaystyle\prod_{i=0}^l\|Df(f^i(f^{n_1^k}(x_k)))/E^s_{f^i(f^{n_1^k}(x_k))}\|\leq \gamma_2^l,
\quad \quad\forall k\geq k_l.
$$
Since $l$ is fixed we can take limits as $k\to\infty$ above to obtain (\ref{infinityy}).

Replacing $\gamma_2$ by some $\gamma_2<\gamma<1$
we can assume that
$$
\displaystyle\prod_{i=0}^{l-1}\| Df(f^i(p))/E_{f^i(p)}\|\leq \gamma^l,
\quad\quad\forall l\in\mathbb{N}.
$$
This estimative together with Lemma \ref{stable-manifold}
implies that there are $\epsilon>0$ and embeddding $\phi:[-\epsilon,\epsilon]\to M$
with $\phi(0)=p$
such that the $C^1$ submanifold $W_\epsilon(p)=\phi([-\epsilon,\epsilon])$ satisfies
\begin{enumerate}
 \item $T_p W_\epsilon(p)=E_p$;
 \item there is $N\in\mathbb{N}^+$ such that $\lim_{n\to\infty}d(f^{nN}(x),f^{nN}(p))=0$
for every $x\in W_\epsilon(p)$.
\end{enumerate}

We assert that
$A_k\cap W_\epsilon(p)$ for every $k$ large.

The proof uses an inevitable intersection argument described as follows.

\vspace{5pt}

Take a small product neighborhood $V=[-1,1]\times [-1,1]$ around $p$, with $p=(0,0)$,
such that the curves $0\times [-1,1]$ and $[-1,1]\times 0$ are parallel to $F_p$ and $E_p$
respectively. Since $T_pW_\epsilon(p)=E_p$, we can assume without loss of generality that $W_\epsilon(p)=[-1,1]\times 0$ and that
$0\times [-1,1]$ is parallel to $F$.

Let $W^u(x,k)$ denote the unstable manifold through $x\in A_k$, $k\in\mathbb{N}$.
Since each $A_k$ is a non-trivial hyperbolic attractor, we have that the unstable manifold $W^u(x)$
through $x\in A_k$ is a smooth immersed curve everywhere tangent to $F$.

To simplify we write
$p_k=f^{n^k_1}(x_k)$ thus $p_k\in A_k$ and $p_k\to p$. Denote by $C_k$ 
the connected component of $W^u(p_k)\cap V$ containing $p_k$.
It follows that $C_k$ is a sequence of smooth curves
everywhere tangent to $F$
approaching to $p$.

Taking $k$ large we have that $C_k$ contains the point $p_k$ close to $p$.
Since $0\times [-1,1]$ is parallel to $F$, the slope $\alpha_k$ between $C_k$ and $0\times [-1,1]$
goes to $0$ as $k\to\infty$.
Take any boundary point $b_k$ of $C_k$. We have that $b_k\in A_k$, since $C_k\subset A_k$, whence $W^u(b_k)$ is well-defined and contained in $A_k$.
If $b_k\in \inte(V)$ we can use $W^u(b_k)$ to extent $C_k$ around $b_k$. But this is impossible because $b_k$ is a boundary point,
so $b_k\in \partial V$. Since $\alpha_k\to 0$, $b_k\in [-1,1]\times \{-1,1\}$ for $k$ large.
Since $C_k$ is everywhere tangent to $F$ (and so transversal to $E$)
we obtain that the two boundary points of $C_k$ belong to different connected components of $[-1,1]\times \{-1,1\}$.
Since $C_k$ is a continuous curve, we conclude that $C_k\cap ([-1,1]\times 0)\neq\emptyset$.
As $W_\epsilon(p)=[-1,1]\times 0$, we obtain $C_k\cap W_\epsilon(p)\neq\emptyset$
yielding $A_k\cap W_\epsilon(p)$ for $k$ large. This completes the proof.

\vspace{5pt}

Using the assertion we can take $k_1\neq k_2$, and points
$z_i\in A_{k_i}\cap W_\epsilon(p)$ ($i=1,2$).

Since $z_i\in W_\epsilon(p)$ we obtain that
$\omega_{f^N}(z_1)=\omega_{f^N}(z_2)=\omega_{f^N}(p)$.
But $z_i\in A_{k_i}$ so $\omega_{f^N}(z_i)\subset A_{k_i}$ for $i=1,2$.
Therefore, $\omega_{f^N}(p)\subset A_{k_1}\cap A_{k_2}$ which is impossible since
$\omega_{f^N}(p)\neq\emptyset$ and $A_{k_1}\cap A_{k_2}=\emptyset$.
This contradiction concludes the proof.
\end{proof}

\section{degenerate points}
\label{sec2}

\noindent
We shall define degenerate points for surface diffeomorphisms.
The basic properties of these points are that:

\begin{itemize}
\item
they cannot have any dominated splitting (Proposition \ref{arrasa}), but there is domination outside them (see Proposition \ref{ladilla2}).
\item
they can be turned  into dissipative homoclinic tangencies by small perturbations (Proposition \ref{ladi1});
\end{itemize}

The first property will be very easy to prove. The proof of the last two properties
involves the use of the techniques in \cite{PS} and \cite{w}.
For this reason we will give them in a separated section (Section \ref{sec-final}).

\subsection{Definition and abscence of domination}
We will need the following terminology.
The orthogonal complement of a linear subspace $E$ of $\mathbb{R}^2$ is denoted by
$E^\perp$.
The {\em angle} between linear spaces $E,F$ of $\mathbb{R}^2$ is defined by $\angle(E,F)=\|L\|$, where $L: E\to E^\perp$ is the linear operator
satisfying $F=\{u+L(u):u\in E\}$.

Given a surface diffeomorphism $f$
and $x\in\sad(f)$,
we denote by
$E^s_x$ and $E^u_x$ the eigenspaces corresponding to the eigenvalues
of modulus less and bigger than $1$ of $Df^{n_x}(x)$
respectively. Notation $E^{s,f}_x$ and $E^{u,f}_x$ will indicate dependence on $f$.
Define,
$$
\angle(x,f)=\min\{\angle(E^s_{f^i(x)},E^u_{f^i(x)}):0\leq i\leq n_x-1\},
\quad\quad\forall x\in\sad(f).
$$

\begin{defi}
\label{critical}
We say that $x$ is a {\em degenerate point} of $f$ if there are
sequences $f_k\to f$ and $x_k\in \sad_d(f_k)$ such that
$x_k\to x$ and $\angle(x_k,f_k)\to0$.
Denote by $\de(f)$ the set of degenerate points of $f$.
\end{defi}

At first glance we can observe that $\de(f)$ is a possibly empty compact invariant set contained in $\sad_d^*(f)$.
A very basic property of these points is that they do not have any domination.
Indeed, we have the following result.

\begin{proposition}
 \label{arrasa}
If $f$ is a surface diffeomorphism, then $\de(f)$ cannot have a dominated splitting.
\end{proposition}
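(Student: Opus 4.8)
The plan is to argue by contradiction, exploiting the robustness of dominated splittings together with two orbit‑invariant ingredients hidden in Definition \ref{critical}: dissipativity of a saddle, and the minimal stable/unstable angle $\angle(x,f)$ along its orbit.

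Suppose $\de(f)\neq\emptyset$ (otherwise there is nothing to prove) and that the compact invariant set $\de(f)$ carries a dominated splitting $T_{\de(f)}M=E\oplus F$; since $M$ is a surface, $E$ and $F$ are one‑dimensional. First I would invoke the well‑known robustness of dominated splittings to produce a neighborhood $U$ of $\de(f)$, a $C^1$‑neighborhood $\mathcal U$ of $f$, an integer $m\geq 1$ and a number $\delta>0$ such that, for every $g\in\mathcal U$, the maximal $g$‑invariant set $\Lambda_g=\bigcap_{n\in\mathbb Z}g^n(U)$ has an $m$‑dominated splitting (automatically into one‑dimensional subbundles, as $M$ is a surface) whose two subbundles make an angle at least $\delta$ at every point of $\Lambda_g$.

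Now fix $x\in\de(f)$ and take, as in Definition \ref{critical}, sequences $f_k\to f$, $x_k\in\sad_d(f_k)$ with $x_k\to x$ and $\angle(x_k,f_k)\to 0$; let $\mathcal O_k$ be the $f_k$‑orbit of $x_k$. The crucial step — and the one I expect to be the main obstacle — is to confine these periodic orbits, namely to show that $\mathcal O_k\subset U$ for all large $k$ (the orbits could a priori wander far from $\de(f)$). To this end I would first prove that whenever $z_k\in\mathcal O_k$ and $z_k\to z$ one has $z\in\de(f)$: indeed $z_k$ lies on the $f_k$‑orbit of the dissipative saddle $x_k$, so $z_k\in\sad_d(f_k)$ because dissipativity is an orbital property, and $\angle(z_k,f_k)=\angle(x_k,f_k)\to 0$ because $\angle(\cdot,f_k)$ is the minimum of the stable/unstable angle along the whole orbit; since $f_k\to f$ and $z_k\to z$, Definition \ref{critical} then gives $z\in\de(f)$. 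A routine compactness argument now forces $\mathcal O_k\subset U$ eventually: if not, pick $z_k\in\mathcal O_k\setminus U$ along a subsequence, pass to a convergent subsequence, and contradict $\de(f)\subset U$ together with the claim just proved.

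Finally I would reach the contradiction. For $k$ large we have $f_k\in\mathcal U$, and the $f_k$‑invariant set $\mathcal O_k\subset U$ is contained in $\Lambda_{f_k}$, hence inherits the robust one‑dimensional dominated splitting, with angle $\geq\delta$ at every point of $\mathcal O_k$. On the other hand $\mathcal O_k$ is a hyperbolic periodic orbit, so $E^s\oplus E^u$ is a dominated splitting of $\mathcal O_k$ into one‑dimensional subbundles, and such a splitting is unique; therefore the restriction of the robust splitting to $\mathcal O_k$ coincides with $E^s\oplus E^u$, and consequently $\angle(x_k,f_k)\geq\delta$ for all large $k$. This contradicts $\angle(x_k,f_k)\to 0$ and proves that $\de(f)$ cannot have a dominated splitting.
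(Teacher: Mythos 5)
Your proposal is correct and follows essentially the same route as the paper: assume domination on $\de(f)$, use robustness of dominated splittings (plus uniqueness of one-dimensional dominated splittings on saddle orbits, which the paper subsumes under ``well-known properties'') to get a uniform lower bound on $\angle(E^{s,g}_x,E^{u,g}_x)$ for saddles of nearby diffeomorphisms whose orbits stay near $\de(f)$, confine the orbits of the $x_k$ there via the orbit-invariance of dissipativity and of $\angle(\cdot,f_k)$ (the paper phrases this as the Hausdorff limit of the orbits lying in $\de(f)$), and contradict $\angle(x_k,f_k)\to 0$. Your write-up merely makes explicit the two steps the paper leaves as standard.
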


\begin{proof}
Suppose by contradiction that $\de(f)$ has a dominated splitting.
It follows from well-known properties of dominated splittings that
there are $\alpha>0$, and neighborhoods $U$ of $\de(f)$ and $\mathcal{U}$ of $f$
such that $\angle(E^{s,g}_x,E^{u,g}_x)>\alpha$, for every $g\in \mathcal{U}$ and every $x\in\sad(g)$
whose orbit under $g$ is contained in $U$.
On the other hand, since $\de(f)$ has a dominated splitting we have $\de(f)\neq\emptyset$, and
so, there are sequence $f_k\to f$ and $x_k\in\sad_d(f_k)$ such that $\angle(x_k,f_k)\to 0$.
It follows from the definition of $\de(f)$ that the Hausdorff limit of the sequence formed by the orbit $\gamma_k$ of $x_k$ under $f_k$
is contained in $\de(f)$. Therefore, we can choose $k$ large so that $f_k\in \mathcal{U}$, $\gamma_k\subset U$
and $\angle(x_k,f_k)\leq\alpha$. This contradicts the choices of $\mathcal{U}$, $U$, $\alpha$ and the proof follows.
\end{proof}

\subsection{Domination outside degenerated points}
We have proved in Proposition \ref{arrasa} that there is no domination in the set of degenerated points.
Now we state the domination outside the degenerated points. We use this to prove a dissipative version of the main result in \cite{PS}.

The following result will be proved in Section \ref{sec-final}.

\begin{proposition}
\label{ladilla2}
If $f$ is a surface diffeomorphism, then
$\sad^*_d(f)\setminus \de(f)$ has a dominated splitting. 
\end{proposition}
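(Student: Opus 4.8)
The plan is to argue by contradiction: suppose $\sad_d^*(f) \setminus \de(f)$ fails to admit a dominated splitting. The natural strategy is to produce, from this failure, a sequence of dissipative saddles (of nearby diffeomorphisms) whose stable/unstable angles collapse along their orbits, and whose orbits accumulate on a set disjoint from $\de(f)$ — contradicting the very definition of $\de(f)$, which is precisely where such angle-collapse is forced to happen. Concretely, I would first recall the standard mechanism (from the Ma\~n\'e/Pujals--Sambarino circle of ideas, as developed in \cite{PS} and \cite{w}) that, on a compact invariant set, absence of a dominated splitting of a prescribed index is equivalent to the existence, for each $N$, of a point (or periodic orbit of a small perturbation) along whose orbit the "$N$-step domination inequality" fails; one then passes to a limit. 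So the first step is to set up: if $\Lambda := \cl(\sad_d^*(f) \setminus \de(f))$ admitted no dominated splitting, then for every $N \in \mathbb{N}^+$ there is no $N$-dominated splitting on $\Lambda$ either.

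Next I would use that $\sad_d^*(f)$ is, by its definition, the set of limits of dissipative saddles $x_k \in \sad_d(f_k)$ with $f_k \to f$, so that a dense subset of $\Lambda$ is approximated by genuine dissipative periodic orbits of nearby maps. The key step is then a "create-the-tangency-from-lack-of-domination" argument in the spirit of Proposition~2.3 of \cite{PS} and \cite{w}: if along arbitrarily long stretches of the orbit of such a saddle $x_k$ the would-be splitting $E^s \oplus E^u$ is not $N$-dominated, one can, by a further $C^1$ perturbation (Franks's Lemma, Lemma~\ref{frank}), rotate the derivatives slightly so as to drive the angle $\angle(x_k', f_k')$ toward $0$ while keeping the perturbed periodic point dissipative (the dissipativity being a small open condition on $|\det Df^{n}|<1$, preserved under small enough perturbations, exactly as in the proof of Lemma~\ref{move-attractor} and Lemma~\ref{l9}). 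This manufactures sequences $f_k' \to f$, $x_k' \in \sad_d(f_k')$ with $x_k' \to x \in \Lambda$ and $\angle(x_k', f_k') \to 0$, which says exactly $x \in \de(f)$.

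Finally I would harvest the contradiction: the Hausdorff limit of the orbits $\gamma_k'$ of $x_k'$ under $f_k'$ is contained in $\de(f)$ (this is how $\de(f)$ absorbs such limits — the same remark used in the proof of Proposition~\ref{arrasa}), so in fact the accumulation point $x$ lies in $\de(f)$; but $x$ was produced as a point of $\sad_d^*(f) \setminus \de(f)$ (or its closure away from $\de(f)$), and here one must be slightly careful: the perturbation should be localized so that the limiting orbit stays in a fixed neighborhood of $\sad_d^*(f) \setminus \de(f)$ that is disjoint from $\de(f)$, which is possible since $\de(f)$ is compact and the non-domination is occurring on the closed set $\Lambda$ which we may shrink to stay off $\de(f)$. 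The main obstacle — and the reason this is deferred to Section~\ref{sec-final} — is precisely making the angle-collapse perturbation and the localization simultaneously rigorous: one needs the quantitative version of "no $N$-domination $\Rightarrow$ one can rotate to angle $<\epsilon$" uniformly along a long orbit segment, together with control that the perturbed orbit does not wander into $\de(f)$ and that dissipativity survives; this is exactly the technical content imported from \cite{PS} and \cite{w}, and I expect the bulk of the work to be in adapting those estimates to the dissipative setting rather than in the contradiction itself.
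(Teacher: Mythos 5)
Your plan captures the right perturbative mechanism --- Franks-type modifications of the derivative along long, non-dominated stretches of dissipative periodic orbits, in the spirit of \cite{PS} and \cite{w}, designed to collapse the angle while keeping the orbit a dissipative saddle --- and this is indeed the engine of the paper's proof. But the global contradiction scheme you build around it has a genuine gap at the localization step. You assume that $\sad_d^*(f)\setminus\de(f)$ (or its closure $\Lambda$) has no dominated splitting, propose to extract for each $N$ a failure of $N$-domination, then perturb and produce a degenerate point; the contradiction only materializes if the resulting accumulation point lies \emph{outside} $\de(f)$. Since $\sad_d^*(f)\setminus\de(f)$ is not closed, the witnesses of non-domination may sit in $\cl(\sad_d^*(f)\setminus\de(f))\cap\de(f)$, or drift toward $\de(f)$ as $N\to\infty$ (with domination constants degenerating only near $\de(f)$); creating small-angle dissipative saddles near such points contradicts nothing, because that is exactly what membership in $\de(f)$ permits. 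Your proposed remedy, ``shrink $\Lambda$ to stay off $\de(f)$,'' is not available: failure of domination on the non-compact invariant set $\sad_d^*(f)\setminus\de(f)$ does not yield failure on some compact invariant subset disjoint from $\de(f)$. The paper's proof is organized the other way around: it fixes an arbitrary $x\notin\de(f)$, uses the constants $\alpha_x$, $U_x$, $\mathcal{W}_x(f)$ furnished by the definition of $\de(f)$, first establishes a uniform eigenvalue estimate for dissipative saddles near $x$ (Lemma \ref{sublemma}, used in particular to force the periods to infinity --- an ingredient absent from your outline), and then proves a uniform finite-time estimate $\|Dh_k^j/E^{s}\|/\|Dh_k^j/E^{u}\|\le\frac12$ for some $j\le J_0$ at dissipative saddles $p_k\to x$ of $h_k\to f$: were it false, explicit shear-plus-stretch perturbations ($P$, $S$, $T_j$) produced via Lemma \ref{frank} would create a dissipative saddle in $U_x$ with angle at most $\alpha_x$ for some $\tilde g\in\mathcal{W}_x(f)$, contradicting the choice of these local constants. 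The dominated splitting over $\sad_d^*(f)\setminus\de(f)$ is then assembled from these periodic estimates as in \cite{w}.

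A secondary inaccuracy: dissipativity is \emph{not} preserved merely because $|\det Df^{n}|<1$ is an open condition. Franks perturbations act at every point of an orbit of period $n$, so an $\epsilon$-change of the derivative at each point can multiply the Jacobian of the return map by a factor of order $(1+C\epsilon)^{n}$, which is unbounded in $n$. In the paper (as in the proofs of lemmas \ref{move-attractor} and \ref{l9}) the perturbations are constructed explicitly so that the determinant of the return map does not increase (shears together with balanced stretches), and this must be built into the construction rather than invoked as openness.
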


Using it we can prove the following result.

\begin{theorem}
\label{peo}
There is an open and dense subset of surface diffeomorphisms $\mathcal{Q}$
such that if $f\in \mathcal{Q}$ and $\de(f)=\emptyset$, then $\sad_d^*(f)$ is hyperbolic. 
\end{theorem}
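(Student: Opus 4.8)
The plan is to build $\mathcal{Q}$ as the union of two explicit open sets: the set $\mathcal{H}$ of surface diffeomorphisms for which $\overline{\sad_d^*(f)}$ is hyperbolic, and a second open set $\mathcal{N}$ consisting of diffeomorphisms which admit a dissipative homoclinic tangency that survives under small perturbations (for instance, diffeomorphisms in the interior of those with a dissipative homoclinic tangency). Both $\mathcal{H}$ and $\mathcal{N}$ are open essentially by definition — hyperbolicity of $\sad_d^*(f)$ is an open condition once one uses Lemma \ref{l5} (the set of dissipative presaddles does not explode) together with the standard openness of hyperbolicity for the locally maximal invariant set in a fixed isolating neighborhood; and $\mathcal{N}$ is open by construction. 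So the first task is to verify that $\mathcal{Q}=\mathcal{H}\cup\mathcal{N}$ is open, and that it has the stated implication trivially: if $f\in\mathcal{Q}$ with $\de(f)=\emptyset$, then $f\notin\mathcal{N}$ is impossible only if $\de(f)\neq\emptyset$ — more precisely, any $f\in\mathcal{N}$ has $\de(f)\neq\emptyset$ because (by Proposition \ref{ladi1}, stated in the overview of Section \ref{sec2}) a persistent dissipative homoclinic tangency forces degenerate points, so $\de(f)=\emptyset$ forces $f\in\mathcal{H}$, giving hyperbolicity of $\sad_d^*(f)$ at once.

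The real content is density of $\mathcal{Q}$. Here I would argue by contradiction: suppose there is $f$ and a $C^1$-neighborhood $\mathcal{W}$ of $f$ with $\mathcal{W}\cap\mathcal{Q}=\emptyset$. Shrinking $\mathcal{W}$, we may assume every $g\in\mathcal{W}$ has $\de(g)=\emptyset$, since otherwise we could perturb $g$ — using the technique of Proposition \ref{ladi1} — to create a persistent dissipative homoclinic tangency and land in $\mathcal{N}\subset\mathcal{Q}$. Now fix $g\in\mathcal{W}$. By Proposition \ref{ladilla2}, $\sad_d^*(g)\setminus\de(g)=\sad_d^*(g)$ has a dominated splitting $T M=E\oplus F$ over $\overline{\sad_d^*(g)}$ (the splitting extends to the closure). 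The goal is to upgrade this domination to hyperbolicity. This is exactly the setting of the main theorem of Pujals–Sambarino \cite{PS}: a compact invariant set with a dominated splitting, whose periodic points are all dissipative saddles (after a small perturbation, Kupka-Smale) and which contains no "non-hyperbolic" pieces, must decompose into a hyperbolic part plus finitely many normally-hyperbolic periodic curves supporting irrational rotations. One rules out the curves using dissipativity: a normally attracting circle carrying an irrational rotation cannot be approximated by dissipative saddles consistently with the absence of degenerate points (in fact such a curve would carry a saddle-node after perturbation, contradicting $\de(g)=\emptyset$ or producing a sink).

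The main obstacle, as I see it, is precisely this last step — applying the Pujals–Sambarino machinery to the specific set $\overline{\sad_d^*(g)}$, which is not a priori the limit set or a homoclinic class, and dealing with the periodic-curve alternative. One must check that $\overline{\sad_d^*(g)}$ is exactly the kind of set to which \cite{PS} applies: compact, invariant, with dominated splitting, and with the property that periodic points in it are dense-enough and hyperbolic of saddle type. The Kupka-Smale reduction handles hyperbolicity of periodic points; the serious work is showing that any irrational-rotation curve in the \cite{PS} decomposition either contradicts $\de(g)=\emptyset$ or can be removed by an arbitrarily small perturbation that stays inside $\mathcal{W}$ and lands in $\mathcal{Q}$ (e.g. by creating a sink, hence leaving the class, or by creating a tangency). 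Once the curves are excluded, $\overline{\sad_d^*(g)}$ is hyperbolic, so $g\in\mathcal{H}\subset\mathcal{Q}$, contradicting $\mathcal{W}\cap\mathcal{Q}=\emptyset$. This establishes density and completes the proof.
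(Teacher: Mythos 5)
There is a genuine gap, and it sits exactly where you yourself flag the ``real content'': the density argument. Your open set $\mathcal{N}$ consists of diffeomorphisms with a \emph{persistent} (i.e.\ $C^1$-robust) dissipative homoclinic tangency, and your reduction ``shrinking $\mathcal{W}$, we may assume every $g\in\mathcal{W}$ has $\de(g)=\emptyset$'' requires that near any $g$ with $\de(g)\neq\emptyset$ you can perturb into $\mathcal{N}$. But Proposition \ref{ladi1} (and the unfolding argument) only produces, arbitrarily close to such a $g$, a single diffeomorphism exhibiting a dissipative homoclinic tangency; a tangency is a non-transverse intersection and is destroyed by further $C^1$ perturbations, so this does not place anything in the \emph{interior} $\mathcal{N}$. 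Nothing in the paper (or in the known $C^1$ theory --- indeed by Moreira's theorem there are no $C^1$-robust homoclinic tangencies on surfaces, so your $\mathcal{N}$ is actually empty) lets you cover the region where degenerate points occur robustly, and on such a region $\mathcal{H}$ fails too: a dissipative tangency forces $\de(h)\neq\emptyset\subset\sad_d^*(h)$, which by Proposition \ref{arrasa} is incompatible with hyperbolicity (or even domination) of $\sad_d^*(h)$. So the density of $\mathcal{Q}=\mathcal{H}\cup\mathcal{N}$ is not established, and with this choice of $\mathcal{Q}$ it cannot be.

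The paper sidesteps this entirely by exploiting that the conclusion is \emph{conditional} on $\de(f)=\emptyset$: it works inside the open set $\mathcal{U}=\{g:\de(g)=\emptyset\}$, builds (via Proposition \ref{ladilla2}, Theorem B of \cite{PS} applied to a dense subset $\mathcal{D}\subset\mathcal{U}$ of $C^2$ Kupka--Smale diffeomorphisms with no normally contracting/expanding irrational circles, Lemma \ref{l5} and robustness of hyperbolicity) an open set $\mathcal{Q}'$ dense in $\mathcal{U}$ on which $\sad_d^*$ is hyperbolic, and then simply sets $\mathcal{Q}=\mathcal{Q}'\cup\inte(\diff^1(M)\setminus\mathcal{Q}')$. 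The added piece makes $\mathcal{Q}$ dense in all of $\diff^1(M)$ at no cost, because any $f\in\mathcal{Q}$ with $\de(f)=\emptyset$ lies in the open set $\mathcal{U}$, where density of $\mathcal{Q}'$ rules out $f\in\inte(\diff^1(M)\setminus\mathcal{Q}')$. Your middle step (domination from Proposition \ref{ladilla2}, then Pujals--Sambarino, then excluding the irrational circles) is in the right spirit, but note two further points the paper handles and you gloss over: Theorem B of \cite{PS} needs $C^2$ regularity, so it is applied only on the dense $C^2$ subset and then propagated to a neighborhood using Lemma \ref{l5} plus the openness of hyperbolicity on an isolating neighborhood; and the irrational circles are excluded simply by choosing the dense subset without them, rather than by a saddle-node/sink creation argument for an arbitrary $C^1$ diffeomorphism.
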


\begin{proof}
Define
$
\mathcal{U}=\{g\in \diff^1(M):\de(g)=\emptyset\}.
$
This is clearly an open subset of surface diffeomorphisms.

By the Kupka-Smale theorem \cite{hk} we can find a dense subset
$\mathcal{D}$ of $ \mathcal{U}$ formed by $C^2$ Kupka-Smale surface diffeomorphisms.
Furthermore, we can assume that every $g\in \mathcal{D}$
has neither normally contracting nor normally expanding
irrational circles (see \cite{PS} for the corresponding definition).

Let us prove that $\sad_d^*(g)$ hyperbolic for every $g\in\mathcal{D}$.
Take any $g\in \mathcal{D}$. 
Then, $\de(g)=\emptyset$ and so $\sad_d^*(g)$ has a dominated splitting by Proposition \ref{ladilla2}.
On the other hand, it is clear from the definition that every periodic point of $g$ in
$\sad_d^*(g)$ is saddle. Then, Theorem B in \cite{PS} implies that
$\sad_d^*(g)$ is the union of a hyperbolic set and normally contracting irrational circles. Since there are no such circles in $\mathcal{D}$
we are done.

We claim that for every $g\in\mathcal{D}$ there is a neighborhood $\mathcal{V}_g\subset \mathcal{U}$ of it
such that $\sad_d^*(h)$ is hyperbolic, $\forall h\in\mathcal{V}_g$.

Indeed, fix $g\in \mathcal{D}$. Since $\sad_d^*(g)$ is hyperbolic we can choose
a neighborhood $U_g$ of $\sad_d^*(g)$ and a neighborhood $\mathcal{V}_g$ of $g$ such that any compact invariant set of
any $h\in \mathcal{V}_g$ is hyperbolic (this is a well-known property of the hyperbolic sets, see \cite{hk}).
Applying Lemma \ref{l5} we can assume that $\sad_d^*(h)\subset U$, for every $h\in \mathcal{V}_g$,
proving the claim.

Define $\mathcal{Q}'$ as the union of the neighborhoods $\mathcal{V}_g$ as $g$ runs over $\mathcal{D}$.
Clearly $\mathcal{Q}'$ is open and dense in $\mathcal{U}$.
Define
$$
\mathcal{Q}=\mathcal{Q}'\cup\inte(\diff^1(M)\setminus\mathcal{Q}'),
$$
where $\inte(\cdot)$ denotes the interior operation.
Clearly $\mathcal{Q}$ is an open and dense subset of surface diffeomorphisms.

Now take $f\in \mathcal{Q}$ with $\de(f)=\emptyset$. Then,
$f\in \mathcal{U}$.
Since $f\in\mathcal{Q}$ we have either $f\in \mathcal{Q}'$ or
$f\in \inte(\diff^1(M)\setminus\mathcal{Q}')$.
If $f\in \inte(\diff^1(M)\setminus\mathcal{Q}')$,
we could arrange a neighborhood $\mathcal{V}$ of $f$ with $\mathcal{V}\cap\mathcal{Q}'=\emptyset$.
Since $f\in \mathcal{Q}$ which is open, we can assume
$\mathcal{V}\subset\mathcal{Q}$ contradicting the denseness of $\mathcal{Q}'$ in $\mathcal{Q}$.
Then, $f\in\mathcal{Q}'$ and we so $f\in\mathcal{V}_g$ for some $g\in \mathcal{D}$.
It follows from the claim that $\sad_d^*(f)$ is hyperbolic and we are done.
\end{proof}

\subsection{Degenerated points and approximation by dissipative tangencies}
In this subsection we establish the last of the aforementioned basic properties of the degenerated
points, that is, approximation by dissipative tangencies.

For this we will need the auxiliary definition below and its subsequently lemma.

\begin{defi}
\label{pretang}
A {\em dissipative homoclinic tangency} of a surface diffeomorphism
is a homoclinic tangency associated to a dissipative saddle.
As in p. 1446 of \cite{w}, we say that a point $x$ is a {\em dissipative prehomoclinic tangency} if it can be perturbed to become
a dissipative homoclinic tangency, i.e.,
if there are sequences $f_k\to f$ and $x_k\to x$ such that
$x_k$ is a dissipative homoclinic tangency
of $f_k$, $\forall k\in\mathbb{N}$.
We denote by $\ta^*(f)$ the set of dissipative prehomoclinic tangencies of $f$.
\end{defi}

Recall the residual subset $\mathcal{R}_0$ in Lemma \ref{lema0}.

\begin{lemma}
\label{aux}
If $f\in \mathcal{R}_0$, then
$\ta^*(f)\subset\cl(\sink(f))$. If, additionally, $f$ is orientation-preserving, then $\ta^*(f)\subset\cl(\sink_\mathbb{C}(f))$.
\end{lemma}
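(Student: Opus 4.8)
The plan is to show that a dissipative prehomoclinic tangency $x$ of $f\in\mathcal{R}_0$ must lie in $\cl(\sink(f))$ (and, in the orientation-preserving case, in $\cl(\sink_\mathbb{C}(f))$) by exploiting the fact that an honest dissipative homoclinic tangency can be unfolded to create a sink arbitrarily close to the tangency point, and then using the non-explosion property of $\sink(f)$ guaranteed by Lemma \ref{lema0}. So suppose for contradiction that $x\notin\cl(\sink(f))$. Apply Lemma \ref{lema0} to this $x$: there are neighborhoods $U_x$ of $x$, $\mathcal{U}_x(f)$ of $f$, and $U$ of $\cl(\sink(f))$ with $U_x\cap U=\emptyset$ and $\cl(\sink(h))\subset U$ for every $h\in\mathcal{U}_x(f)$. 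In particular, $\sink(h)\cap U_x=\emptyset$ for all $h$ in this neighborhood of $f$.

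Now unwind the definition of $\ta^*(f)$: there are sequences $f_k\to f$ and $x_k\to x$ with $x_k$ a dissipative homoclinic tangency of $f_k$. For $k$ large, $f_k\in\mathcal{U}_x(f)$ and $x_k\in U_x$. The key step is a local perturbation: since $x_k$ is a point of tangency between $W^s$ and $W^u$ of a dissipative saddle $p_k$ of $f_k$ (with $|\det Df_k^{n_{p_k}}(p_k)|<1$), a standard unfolding of the tangency — a $C^1$-small perturbation $g_k$ of $f_k$, supported near the tangency point $x_k$, pushing the stable and unstable manifolds to cross quadratically and then slightly past — creates a periodic point $q_k$ of $g_k$ arbitrarily close to $x_k$ whose derivative is $C^0$-close to a product of the (dissipative) return map along $p_k$'s orbit with the local unfolding, hence has determinant of modulus $<1$; generically in this unfolding one lands on a sink $q_k$ (the eigenvalues are small because the tangency forces a strong contraction transverse to the unstable direction, and the dissipativeness controls the other eigenvalue). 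Choosing the perturbation small enough, $g_k\in\mathcal{U}_x(f)$ and $q_k\in U_x$, so $q_k\in\sink(g_k)\cap U_x$, contradicting $\sink(g_k)\cap U_x=\emptyset$.

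The main obstacle is making the sink-creation step precise: one must verify that unfolding a \emph{dissipative} homoclinic tangency indeed produces a \emph{sink} (not merely a periodic point), i.e. that both eigenvalues of the newly created periodic orbit have modulus less than $1$. This is where dissipativeness is used essentially — the return map along the tangency is a composition of the saddle return (area-contracting) with the parabolic-like local map at the tangency, and for appropriate parameters in the unfolding the created orbit is a sink; this is classical (it is the mechanism behind Newhouse sinks) and can be cited from \cite{w} or the Newhouse/Palis--Takens literature. For the orientation-preserving refinement, the additional point is that when $f$ preserves orientation one can arrange the created sink to have non-real eigenvalues, i.e. to be a spiral sink: the local unfolding can be perturbed so that the derivative of the return map at the new periodic orbit is a contraction with complex eigenvalues (a small rotation composed with a contraction is still a contraction), and then one reruns the argument with $\sink_\mathbb{C}$ and $\cl(\sink_\mathbb{C}(f))$ in place of $\sink$ and $\cl(\sink(f))$, invoking the corresponding clause of Lemma \ref{lema0}. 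The rest of the argument is identical, yielding $\ta^*(f)\subset\cl(\sink_\mathbb{C}(f))$.
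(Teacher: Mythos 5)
Your proposal is correct and follows essentially the same route as the paper: assume $x\notin\cl(\sink(f))$, take the neighborhoods from Lemma \ref{lema0}, use the definition of $\ta^*(f)$ to find a nearby diffeomorphism with a dissipative homoclinic tangency in $U_x$, unfold it to create a sink there, and contradict the non-explosion property (\ref{pp}). The only difference is presentational: the sink-creation step you flag as the main obstacle is simply delegated in the paper to the standard unfolding references (\cite{pt} for sinks, and \cite{m0} for producing \emph{spiral} sinks in the orientation-preserving case, which is exactly the point your ``small rotation composed with a contraction'' remark needs to be backed by).
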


\begin{proof}
Fix $f\in\mathcal{R}_0$ and $x\in \ta^*(f)$.
Suppose by absurd that $x\notin\cl(\sink(f))$.
Then, by Lemma \ref{lema0},
we can select neighborhoods $U_x$, $\mathcal{U}_x(f)$ and $U$ of $x$, $f$ and $\cl(\sink(f))$
respectively satisfying (\ref{pp}).
As $x\in\ta^*(f)$ we
obtain $g\in\mathcal{U}_x(f)$ 
having a dissipative homoclinic tangency in $ U_x$.
Unfolding this tangency in the standard way (e.g. \cite{pt})
we obtain a surface diffeomorphism $h$ close to $g$ (and thus within $\mathcal{U}_x(f)$)
satisfying
$\sink(h)\cap U_x\neq\emptyset$.
As this contradicts (\ref{pp})
we obtain the result.

In the orientation-preserving case we just replace \cite{pt} by \cite{m0}
in order to obtain $h\in \mathcal{U}_x(f)$ with $\sink_\mathbb{C}(h)\cap U_x\neq\emptyset$.
\end{proof}

Unfolding tangencies as in the previous proof we can see that every surface diffeomorphism $f$ satisfies
$\ta^*(f)\subset\de(f)$.

The following property
of degenerated points is precisely that
the converse inclusion holds for Kupka-Smale surface diffeomorphisms.
Its proof will be given in Section \ref{sec-final}.

\begin{proposition}
\label{ladi1}
If $f$ is a Kupka-Smale surface diffeomorphism, then
$\de(f)=\ta^*(f)$.
\end{proposition}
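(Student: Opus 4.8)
We already know from the remark just before the statement that $\ta^*(f)\subset\de(f)$ holds for every surface diffeomorphism, so the work is entirely in the reverse inclusion $\de(f)\subset\ta^*(f)$, under the standing assumption that $f$ is Kupka-Smale. The plan is to take $x\in\de(f)$, so by Definition \ref{critical} there are sequences $f_k\to f$ and $x_k\in\sad_d(f_k)$ with $x_k\to x$ and $\angle(x_k,f_k)\to 0$; I must produce, for each fixed neighborhood $\mathcal{W}(f)$ of $f$ and $U$ of $x$, a diffeomorphism $h\in\mathcal{W}(f)$ carrying a dissipative homoclinic tangency inside $U$. The mechanism is the one from \cite{PS} (and its quantitative refinements in \cite{w}): a dissipative saddle whose stable and unstable directions make a very small angle somewhere along its orbit is, after an arbitrarily $C^1$-small perturbation, the carrier of a homoclinic tangency which stays dissipative. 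Concretely, once $\angle(x_k,f_k)$ is small enough relative to the size of $\mathcal{W}(f)$ (via Franks's Lemma, Lemma \ref{frank}), one tilts $Df_k$ along the orbit of $x_k$ at the near-tangency site so that the local stable and unstable manifolds of the (still dissipative) continuation become tangent; the smallness of the angle is exactly what keeps the required tilt within the Franks $\epsilon$, and dissipativity is an open condition on the eigenvalue product that is preserved by a small enough perturbation.

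The main structural point I would organize the argument around is that $\ta^*(f)$ does not explode and is, in a suitable sense, robust: if $g$ is $C^1$-close to $f$ and $y$ is a dissipative homoclinic tangency of $g$ close to $x$, then $x\in\ta^*(f)$ essentially by definition (diagonalizing the double sequence). So it suffices to show: for every $k$ large there is $h_k$ with $h_k\to f$ and $h_k$ having a dissipative homoclinic tangency $y_k$ with $y_k\to x$. Since $x_k\in\sad_d(f_k)$ and $f_k\to f$, and since $\angle(x_k,f_k)\to 0$, the construction above produces $h_k$ as a perturbation of $f_k$ of size $o(1)$ as $k\to\infty$ (the perturbation size is controlled by $\angle(x_k,f_k)$ together with $\mathrm{dist}(f_k,f)$), and the created tangency $y_k$ lies within distance $o(1)$ of $x_k$, hence $y_k\to x$. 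Then $h_k\to f$, each $h_k$ has a dissipative homoclinic tangency, and the tangency points converge to $x$; this is precisely the statement $x\in\ta^*(f)$ in Definition \ref{pretang}. The Kupka-Smale hypothesis on $f$ enters only to guarantee that the relevant periodic continuations are hyperbolic saddles and that the invariant manifolds of $x_k$ are in general position, so the ``near-tangency'' picture along the orbit is the generic one and the perturbation genuinely creates a first-order tangency rather than colliding with some degenerate configuration.

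The hard part — and the reason the paper defers it to Section \ref{sec-final} — is making the perturbation that turns ``small angle along a dissipative orbit'' into ``genuine dissipative homoclinic tangency'' quantitatively correct: one must choose the linear maps $L_i$ in Franks's Lemma so that (a) $\|L_i-Df_k(f_k^i(x_k))\|<\epsilon$, (b) the product $\prod L_i$ still has determinant of modulus $<1$ (dissipativity) with one eigenvalue of modulus $<1$ and one of modulus $>1$ (saddle), and (c) the resulting rotation of the stable/unstable eigendirections at the near-tangency index produces an actual tangency between $W^s$ and $W^u$ of the continuation inside $U$. Step (c) is the delicate geometric input from \cite{PS}, \cite{w}: it requires tracking the local invariant manifolds and showing that the branch one wants to make tangent actually reaches the relevant region. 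I would quote Theorem B of \cite{PS} and the tangency-creation construction on p.~1446 of \cite{w} as black boxes for (c), and handle (a)–(b) by the same kind of elementary estimate used in the proof of Lemma \ref{l9} (scaling $Df_k$ by a factor $e^{\pm\delta/2}$ and bounding $|1-e^{-\delta/2}|\cdot C<\epsilon$), so that the $C^1$-size of the perturbation is controlled and tends to $0$ as $k\to\infty$. Assembling these yields $\de(f)\subset\ta^*(f)$, which combined with the reverse inclusion already noted gives $\de(f)=\ta^*(f)$.
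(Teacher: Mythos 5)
There is a genuine gap in the step you treat as routine: the claim that ``the smallness of the angle is exactly what keeps the required tilt within the Franks $\epsilon$'' and that the perturbation size creating the tangency is controlled by $\angle(x_k,f_k)$ alone. The tangency-creation lemma you want to quote as a black box (Lemma 2.2.2 of \cite{PS}, amended in Lemma 4.2 of \cite{w}) does not say that a small angle suffices; it requires the angle at the saddle $p$ to be smaller than a threshold of the form
$$
\frac{|\sigma(p,\cdot)|-1}{|\sigma(p,\cdot)|+1}\,\frac{\epsilon_1}{2},
$$
i.e.\ small \emph{relative to the strength of the expanding eigenvalue}: the perturbation is supported in a fundamental domain of the invariant manifold, whose size degenerates as $|\sigma|\to1$, so a weak saddle with small angle cannot in general be turned into a tangency by a perturbation of fixed size $\epsilon_1$. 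Nothing in the definition of $\de(f)$ prevents the saddles $x_k\in\sad_d(f_k)$ from having $|\sigma(x_k,f_k)|\to1$ while $\angle(x_k,f_k)\to0$, and in that regime your argument breaks down: the required perturbation is \emph{not} $o(1)$.

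This is exactly the case the paper's proof is built to handle, and it is also where the Kupka--Smale hypothesis really enters (not, as you assert, merely to give general position of invariant manifolds — the saddles involved belong to $f_k\neq f$, so Kupka--Smale for $f$ says nothing direct about them). In the bad case $\gamma_n\geq\frac{|\sigma_n|-1}{|\sigma_n|+1}\frac{\epsilon_1}{2}$ for all $n$, the angles tending to $0$ force $|\sigma_n|\to1$, so bounded periods would produce a non-hyperbolic periodic point of $f$ in the limit, contradicting Kupka--Smale; hence the periods $m_n\to\infty$. The paper then makes a preliminary Franks perturbation along the whole orbit, multiplying the expansion by $(1+\delta_n)^{m_n}$ with $\delta_n=\gamma_n\epsilon'/2$ (so the perturbation is of size $\leq\epsilon'$, thanks to the angle bound and Lemma II.10 of \cite{M}, and the determinant is multiplied by $(1-\delta_n^2)^{m_n}<1$, preserving dissipativity), and the numerical Lemma \ref{numerical} — which needs $m_n\to\infty$ — guarantees that for some $n$ the boosted eigenvalue $\tilde\sigma_n$ satisfies $\frac{\tilde\sigma_n-1}{\tilde\sigma_n+1}\frac{\epsilon_1}{2}>\gamma_n$. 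Only after this eigenvalue-strengthening step can the tangency-creation lemma be applied (by a second, $\epsilon_1$-small perturbation supported in a small ball near but disjoint from the orbit — one cannot create the tangency by tilting $Df$ along the orbit itself, since making $E^s_p$ and $E^u_p$ coincide would destroy the saddle). Your proposal omits this two-step structure entirely, so as written it does not prove $\de(f)\subset\ta^*(f)$.
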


As a first application we obtain
the following localization of the degenerated points. Recall again the residual subset $\mathcal{R}_0$ from Lemma \ref{lema0}.

\begin{corollary}
\label{ladilla1}
If $f\in \mathcal{R}_0$, then
$\de(f)\subset \cl(\sink(f))\setminus\sink(f)$.
If, additionally, $f$ is orientation-preserving, then $\de(f)\subset\cl(\sink_\mathbb{C}(f))\setminus\sink_\mathbb{C}(f)$.
\end{corollary}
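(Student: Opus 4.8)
The plan is to read the statement off from Proposition \ref{ladi1}, Lemma \ref{aux}, and the observation (made right after Definition \ref{critical}) that $\de(f)\subset\sad_d^*(f)$. First I would fix $f\in\mathcal{R}_0$. By Lemma \ref{lema0} the diffeomorphism $f$ is Kupka-Smale, so Proposition \ref{ladi1} applies and gives $\de(f)=\ta^*(f)$. Lemma \ref{aux} then yields $\ta^*(f)\subset\cl(\sink(f))$, hence $\de(f)\subset\cl(\sink(f))$; in the orientation-preserving case the second half of Lemma \ref{aux} gives instead $\de(f)\subset\cl(\sink_\mathbb{C}(f))$.

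It then remains to remove the sinks themselves, i.e. to check $\de(f)\cap\sink(f)=\emptyset$. For this I would use $\de(f)\subset\sad_d^*(f)$ together with the robustness of hyperbolic periodic orbits. Indeed, let $p\in\sink(f)$; since $f$ is Kupka-Smale, $p$ is a hyperbolic sink, so there are a neighborhood $U$ of the $f$-orbit of $p$ and a neighborhood $\mathcal{U}$ of $f$ such that every $g\in\mathcal{U}$ has exactly one periodic orbit inside $U$, namely the continuation of the orbit of $p$, and it is still a sink. In particular no $g\in\mathcal{U}$ has a dissipative saddle in $U$, so $p$ cannot be a dissipative presaddle; thus $\sad_d^*(f)\cap\sink(f)=\emptyset$, whence $\de(f)\cap\sink(f)=\emptyset$. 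Combined with the previous paragraph this gives $\de(f)\subset\cl(\sink(f))\setminus\sink(f)$.

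For the orientation-preserving statement, note that $\sink_\mathbb{C}(f)\subset\sink(f)$, so the disjointness $\de(f)\cap\sink(f)=\emptyset$ already obtained implies $\de(f)\cap\sink_\mathbb{C}(f)=\emptyset$; together with $\de(f)\subset\cl(\sink_\mathbb{C}(f))$ this yields $\de(f)\subset\cl(\sink_\mathbb{C}(f))\setminus\sink_\mathbb{C}(f)$, finishing the proof.

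As for difficulty, essentially all the weight has already been carried by Proposition \ref{ladi1} (proved later via the techniques of \cite{PS} and \cite{w}) and by the non-explosion Lemma \ref{lema0}; the only genuinely new input is the elementary remark that a hyperbolic sink is robustly isolated among periodic points and hence cannot be a limit of dissipative saddles, so I do not expect any real obstacle here.
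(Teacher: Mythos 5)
Your argument is correct and follows essentially the same route as the paper: combine Proposition \ref{ladi1} with Lemma \ref{aux} (using that every $f\in\mathcal{R}_0$ is Kupka-Smale) to get $\de(f)\subset\cl(\sink(f))$ (resp. $\cl(\sink_\mathbb{C}(f))$), then use $\de(f)\subset\sad_d^*(f)$ and $\sad_d^*(f)\cap\sink(f)=\emptyset$ to discard the sinks themselves. The only difference is that you spell out the robust-isolation argument behind $\sad_d^*(f)\cap\sink(f)=\emptyset$, which the paper simply asserts.
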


\begin{proof}
Since every $f\in\mathcal{R}_0$ is Kupka-Smale, we obtain from
Lemma \ref{aux} and Proposition \ref{ladi1} that $\de(f)\subset\cl(\sink(f))$ (or $\cl(\sink_\mathbb{C}(f))$ in the orientation-preserving case).
But $\de(f)\subset \sad_d^*(f)$ and $\sad_d^*(f)\cap \sink(f)=\emptyset$, so $\de(f)\cap \sink(f)=\emptyset$
thus $\de(f)\subset \cl(\sink(f))\setminus\sink(f)$ (resp. $\cl(\sink_\mathbb{C}(f))\setminus\sink_\mathbb{C}(f)$).
\end{proof}

\subsection{Degenerated points and essential hyperbolicity}
In 1988 Araujo proved in his thesis \cite{A} under the guidance of Ma\~n\'e
that a $C^1$ generic surface diffeomorphism either has infinitely many sinks or is essentially hyperbolic
(more than this, he proved that the union of the basins of the hyperbolic attractors has full
Lebesgue measure).
The only available proofs of this result (both in Portuguese) are the original one \cite{A} and
the B. Santiago's dissertation under the guidance of the first author \cite{s}.

In this subsection we shall combine the previous results to obtain some equivalences for essentially hyperbolicity
(including Theorem \ref{hyp->ess}). A proof
of the equivalence (1) $\Longleftrightarrow$ (4) below (but with different arguments)
was sketched by R. Potrie in his draft note \cite{po}.

\begin{theorem}
\label{ara-theo}
There is a residual subset $\mathcal{R}_8$ of surface diffeomorphisms $f$ where
the following properties are equivalent:
\begin{enumerate}
\item $\sink(f)$ is finite;
\item Every dissipative homoclinic class of $f$ is hyperbolic;
\item $\de(f)=\emptyset$;
\item $f$ is essentially hyperbolic.
\end{enumerate}
\end{theorem}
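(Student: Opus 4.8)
The plan is to prove the cycle of implications $(1)\Rightarrow(2)\Rightarrow(3)\Rightarrow(4)\Rightarrow(1)$ on a suitable residual set $\mathcal{R}_8$, obtained by intersecting the residual sets $\mathcal{R}_0,\mathcal{R}_2,\mathcal{R}_6,\mathcal{R}_7$ already constructed, together with the residual set where Mañé's ergodic closing lemma and the generic equivalence between the homoclinic class through $x$ and the chain-recurrence class are available, and with the residual set $\mathcal{R}_{\mathcal{Q}}$ coming from the open-dense set $\mathcal{Q}$ of Theorem \ref{peo} (i.e. $\mathcal{R}_8\subset\mathcal{Q}$ after intersecting with a residual subset of the interior). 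I would also throw in genericity of Araujo's dichotomy ($C^1$ generically, either $\sink(f)$ is infinite or $f$ is essentially hyperbolic), since that gives $(1)\Rightarrow(4)$ almost directly and makes the loop short; the point of Theorem \ref{ara-theo} is really to reroute everything through the degenerate set $\de(f)$.

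For $(1)\Rightarrow(3)$: if $\sink(f)$ is finite then, $f$ being Kupka-Smale, $\cl(\sink(f))=\sink(f)$, so $\cl(\sink(f))\setminus\sink(f)=\emptyset$; by Corollary \ref{ladilla1} we get $\de(f)\subset\cl(\sink(f))\setminus\sink(f)=\emptyset$, hence $\de(f)=\emptyset$. For $(3)\Rightarrow(4)$: with $\de(f)=\emptyset$, Theorem \ref{peo} (using $f\in\mathcal{Q}$) gives that $\sad_d^*(f)$ is hyperbolic; in particular every dissipative homoclinic class, being contained in $\cl(\sad_d(f))\subset\sad_d^*(f)$ by the Birkhoff–Smale theorem, is hyperbolic. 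One then wants to deduce essential hyperbolicity. Here I would use Corollary \ref{prove-attractor}: on $\mathcal{R}_7$ every non-trivial attractor is dissipative, hence a dissipative homoclinic class, hence hyperbolic and, being contained in the compact invariant hyperbolic set $\sad_d^*(f)$ which carries a dominated splitting, there are only finitely many of them by Theorem \ref{finiteness-attractors}; the trivial attractors are exactly the sinks, and since $\de(f)=\emptyset$ one shows $\sink(f)$ is finite too (otherwise an accumulation point of sinks would lie in $\cl(\sink(f))\setminus\sink(f)\subset\sad_d^*(f)$, which is hyperbolic and cannot accumulate infinitely many sink orbits, as in the proof of Lemma \ref{cm}). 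Finally one must check that the basins of these finitely many hyperbolic attractors (sinks included) form an open dense set; for that I would invoke the quasi-attractor/Lyapunov-stability machinery (Lemma \ref{move-attractor} and genericity of Araujo's dichotomy) to rule out the alternative that a positive-measure or residual set of points has omega-limit avoiding all attractors. This last density step is the main obstacle: identifying enough attractors to exhaust a dense set of basins is precisely the content of Araujo's theorem, and I expect to have to quote \cite{A} (or its account in \cite{s}) here rather than reprove it.

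For $(4)\Rightarrow(1)$: essential hyperbolicity means finitely many hyperbolic attractors whose basins are open and dense; a hyperbolic attractor contains only finitely many sinks (none, if non-trivial), and any sink of $f$ must be one of these attractors (its basin is open, so it meets the dense union of the attractor basins, forcing the sink into an attractor), hence $\sink(f)$ is finite — this is the easy implication and is exactly the remark preceding Corollary \ref{thAA}. For $(2)\Rightarrow(3)$: suppose every dissipative homoclinic class is hyperbolic but $\de(f)\neq\emptyset$. By Proposition \ref{ladi1}, $\de(f)=\ta^*(f)$, so there is a point $x\in\de(f)$ that can be perturbed into a dissipative homoclinic tangency; but $x\in\sad_d^*(f)$, and generically (using the ergodic closing lemma plus the Birkhoff–Smale theorem and the generic coincidence of homoclinic and chain classes) $x$ lies in a dissipative homoclinic class $H$, which is hyperbolic by hypothesis and hence contained in $\mathcal{Q}$'s hyperbolic set — a hyperbolic set admits no perturbation to a tangency, contradicting $x\in\ta^*(f)$, so Proposition \ref{arrasa} (no domination on $\de(f)$) versus the domination forced inside a hyperbolic class closes the loop. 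The delicate point throughout is the bookkeeping of which residual/open-dense sets are being intersected and the careful use of Corollary \ref{ladilla1} and Theorem \ref{peo} to pin $\de(f)$ between $\sink$-accumulation and hyperbolicity; once those are in place the implications are short.
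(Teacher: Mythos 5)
Your step $(2)\Rightarrow(3)$ has a genuine gap, and it is the heart of the theorem. You argue that any $x\in\de(f)=\ta^*(f)$ must, generically, lie in a dissipative homoclinic class $H$, which is hyperbolic by hypothesis, and that a hyperbolic class admits no perturbation creating a tangency at $x$. Neither claim is available: a point of $\sad_d^*(f)$ (equivalently, generically, of $\cl(\sad_d(f))$) is a limit of dissipative saddles but need not belong to the homoclinic (or chain) class of any periodic orbit, and the generic coincidence of homoclinic and chain classes does not repair this; moreover, even if $x\in H$ with $H$ hyperbolic, the perturbed tangency (or the small-angle dissipative saddle in the definition of $\de(f)$) is associated to a periodic orbit that only visits a neighborhood of $x$ — its orbit need not stay near $H$, so hyperbolicity of $H$ controls neither its angles nor its invariant manifolds, and no contradiction follows. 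Likewise, "domination forced inside a hyperbolic class" gives at best pointwise information and does not assemble into a dominated splitting over the compact invariant set $\de(f)$, which is what Proposition \ref{arrasa} needs. The paper's route is different and avoids all of this: from (2), Lemma \ref{cm} gives $\sad_d(f)\cap\cl(\sink(f))=\emptyset$, hence $\sad_d(f)\cap\de(f)=\emptyset$ by Corollary \ref{ladilla1}; then Proposition \ref{ladilla2} gives a dominated splitting on $\sad^*_d(f)\setminus\de(f)\supset\sad_d(f)$, which extends to $\cl(\sad_d(f))$ and, on the residual set where $\sad_d^*(f)=\cl(\sad_d(f))$, to all of $\sad_d^*(f)$; finally Lemma \ref{l9} places $\cl(\sink(f))\setminus\sink(f)$ (hence $\de(f)$, by Corollary \ref{ladilla1}) inside this dominated set, so Proposition \ref{arrasa} forces $\de(f)=\emptyset$. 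Without this (or some substitute) your cycle of implications does not close.

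Two further points. In $(3)\Rightarrow(4)$ you propose to quote Araujo's dichotomy from \cite{A} for the density of the basins; the paper does not do this — it proves density directly using the residual set where generic points have quasi-attracting omega-limit sets (\cite{m1}, \cite{bc}), Lemma \ref{move-attractor}, neutrality of homoclinic classes (Lemma \ref{neutral}), and the spectral decomposition of the hyperbolic set $\cl(\sad_d(f))$ — so your version both depends on an unpublished result the paper is partly meant to circumvent and leaves the key step unproved. Finally, your finiteness-of-sinks argument under (3) ("as in Lemma \ref{cm}") needs a small repair: Lemma \ref{cm} uses neutrality of the class, which is not available for $\sad_d^*(f)$; instead one should note that the orbits of the accumulating sinks consist of sinks, so their Hausdorff limits avoid $\sink(f)$ (local basins) and hence lie in $\cl(\sink(f))\setminus\sink(f)\subset\sad_d^*(f)$ by Lemma \ref{l9}, after which uniform hyperbolicity on a neighborhood yields the contradiction.
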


\begin{proof}
First we shall construct the residual subset $\mathcal{R}_8$.

Let $\mathcal{R}_0$, $\mathcal{R}_2$, $\mathcal{R}_6$
and $\mathcal{Q}$ be the residual subsets in lemmas \ref{lema0}, \ref{neutral},
\ref{move-attractor} and the open subset in Theorem \ref{peo} respectively.

It follows from \cite{m1} (as amended in \cite{bc}) that
there is a residual subset $\mathcal{E}$ of surface diffeomorphisms $f$
exhibiting a residual subset $R_f\subset M$ such that
$\omega(x)$ is a quasi-attracting set, $\forall x\in R_f$.

Define $S: \diff^1(M)\to 2_c^M$ by $S(f)=\cl(\sad_d(f))$.
This map is clearly lower-semicontinuous, and so, upper-semicontinuous
in a residual subset of surface diffeomorphisms $\mathcal{K}$.

Observe that $\sad^*_d(f)=\cl(\sad_d(f))$ for every $f\in \mathcal{K}$.
Indeed, choose sequences $f_k\to f$ and $x_k\in\sad_d(f_k)$ such that $x_k\to x$ for some point $x$.
Fix a compact neighborhood $U$ of $\cl(\sad_d(f))$.
Since $S$ is upper-semicontinuous at $f$ and $f_k\to f$, we can select $k_0\in\mathbb{N}$ such that
$x_k\in U$ for $k\geq k_0$. Since $x_k\to x$ and $U$ is compact we conclude that $x\in U$.
As $U$ is an arbitrary neighborhood of $\cl(\sad_d(f))$, we get $x\in \cl(\sad_d(f))$ proving the assertion.

Define
$$
\mathcal{R}_8=\mathcal{R}_0\cap \mathcal{R}_2\cap \mathcal{R}_6\cap\mathcal{Q}\cap\mathcal{E}\cap\mathcal{K}.
$$
Then, $\mathcal{R}_8$ is also a residual subset of surface diffeomorphisms.

Now, we shall prove that the aforementioned properties are equivalent in $\mathcal{R}_8$.

\vspace{5pt}

Take $f\in\mathcal{R}_8$ satisfying (1).

Then, $\cl(\sink(f))\setminus\sink(f)=\emptyset$.

Since $f\in\mathcal{R}_0$, Corollary \ref{ladilla1} implies $\de(f)=\emptyset$.

Since $f\in\mathcal{Q}$, Theorem \ref{peo} implies that $\sad_d^*(f)$ is hyperbolic.

As $\cl(\sad_d(f))\subset \sad_d^*(f)$, we have that $\cl(\sad_d(f))$ is hyperbolic too.

But every dissipative homoclinic class belongs to $\cl(\sad_d(f))$ (by the Birkhoff-Smale theorem \cite{hk}).

Therefore, all such classes are hyperbolic.

Then, $f$ satisfies (2).

\vspace{5pt}

Now, take $f\in\mathcal{R}_8$ satisfying (2).

Using $f\in\mathcal{R}_2$, Lemma \ref{cm},
and the obvious fact that every dissipative saddle belongs to a dissipative homoclinic class,
we get
$\sad_d(f)\cap\cl(\sink(f))=\emptyset$.

Since $f\in \mathcal{R}_0$, Corollary \ref{ladilla1} implies
$\de(f)\subset \cl(\sink(f))$.

Therefore,
$\sad_d(f)\cap\de(f)=\emptyset.$

But clearly
$\sad_d(f)\subset \sad^*_d(f)$
so
$\sad_d(f)\subset \sad^*_d(f)\setminus \de(f).$

Moreover,
$\sad^*_d(f)\setminus \de(f)$
has a dominated splitting by Proposition \ref{ladilla2}.

Extending this last splitting to the closure in the standard way
\cite{bdv}, we obtain one for $\cl(\sad_d(f))$.

But $f\in\mathcal{K}$ so $\sad_d^*(f)=\cl(\sad_d(f))$ thus $\sad^*_d(f)$ has a dominated splitting too.

As $f\in \mathcal{R}_0$, we have that $f$ is Kupka-Smale and then $\cl(\sink(f))\setminus \sink(f)\subset \sad_d^*(f)$ by Lemma \ref{l9}.

Since $\sad_d^*(f)$ has a dominated splitting, we have that $\cl(\sink(f))\setminus \sink(f)$ also does.

But $f\in \mathcal{R}_0$ so $\de(f)\subset \cl(\sink(f))\setminus\sink(f)$ by Corollary \ref{ladilla1}.

Therefore, $\de(f)=\emptyset$ by Proposition \ref{arrasa}.

Thus, $f$ satisfies (3).

\vspace{5pt}

Next take $f\in \mathcal{R}_8$ satisfying (3).

Since $f\in \mathcal{Q}$, we conclude from Theorem \ref{peo} that
$\sad_d^*(f)$ is hyperbolic.

As $\cl(\sad_d(f))\subset \sad_d^*(f)$, we conclude the same for $\cl(\sad_d(f))$.

Since the periodic points are dense in $\cl(\sad_d(f))$,
we can apply the Smale's spectral theorem \cite{hk} to obtain
a decomposition
$$
\cl(\sad_d(f))=H_1\cup\cdots\cup H_r
$$
into
finitely many disjoint homoclinic classes $H_1,\cdots, H_r$.

Denote by $A_1,\cdots, A_l$ the elements of $\{H_1\cdots, H_r\}$ which are hyperbolic attractors, and by
$s_1,\cdots, s_i$ the orbits of the sinks of $f$.

Since $f\in \mathcal{E}$, we have that the aforementioned residual subset $R_f$ is well-defined.

If $x\in R_f$ we have that $\omega(x)$ is a quasi-attracting set.

Therefore,
$\omega(x)\cap(\cl(\sad_d(f))\cup\sink(f))$ by Lemma \ref{move-attractor} since
$f\in\mathcal{R}_6$.

If $\omega(x)\cap \sink(f)\neq\emptyset$, then $\omega(x)=s_j$ for some
$1\leq j\leq i$.

Otherwise, $\omega(x)\cap \cl(\sad_d(f))\neq\emptyset$ and so $\omega(x)\cap H_k\neq\emptyset$ for some
$1\leq k\leq r$.

As $f\in \mathcal{R}_2$ we have that $H_k$ is neutral, and so,
$\omega(x)\subset H_k$ by Lemma \ref{neutral}.

Therefore, $\omega(x)$ is a hyperbolic quasi-attracting set, and, then, a hyperbolic attractor.

From this we conclude that $\omega(x)=A_j$ for some $1\leq j\leq l$.

We conclude that $R_f$ is contained in the union of the basins
of the collection of hyperbolic attractors $\{A_1,\cdots, A_l,s_1,\cdots, s_i\}$.

Since $R_f$ is residual (and so dense) in $M$ we conclude
that the union of the basins of this collection is dense.

Since such an union is clearly open, we obtain that $f$ is essentially hyperbolic.

Therefore, $f$ satisfies (4).
Since (4) obviously implies (1), we are done.
\end{proof}

\section{Proofs}
\label{sec4}

\begin{proof}[Proof of Theorem \ref{attractor}]
Let $\mathcal{R}_0$ and $\mathcal{R}_7$ be the residual subsets in Lemma
\ref{lema0} and Corollary \ref{prove-attractor}.
By Theorem 2 p. 133 in \cite{abcd}
there is a residual subset $\mathcal{J}$ of surface diffeomorphisms $f$ for which
every attractor with a dominated splitting is hyperbolic.
Define,
$$
\mathcal{R}=\mathcal{R}_0\cap\mathcal{R}_7\cap\mathcal{J}.
$$
Then, $\mathcal{R}$ is a residual subset of surface diffeomorphisms.

Suppose by contradiction that there is $f\in\mathcal{R}$ exhibiting
an infinite sequence of non-trivial
attractors $A_k$, $k\in\mathbb{N}$.
Denote by $A$ the union of these attractors.
Since $f\in\mathcal{R}_7$,  Corollary \ref{prove-attractor} and the obvious fact that
$\cl(\sad_d(f))\subset \sad_d^*(f)$ imply
$A\subset \sad_d^*(f)\setminus \cl(\sink(f))$.
Then,
$A\subset \sad_d^*(f)\setminus \de(f)$ by Corollary \ref{ladilla2}
since $f\in\mathcal{R}_0$.
As
$\sad^*_d(f)\setminus \de(f)$
has a dominated splitting by Proposition \ref{ladilla2}, we have that
$A$ has a dominated splitting too.
Extending this last splitting to the closure in the standard way
\cite{bdv} we obtain that $\Lambda=\cl(A)$
has a dominated splitting.

Since $f\in \mathcal{R}_0$ we have that $f$ is Kupka-Smale.

Since $\Lambda$ has a dominated splitting, we have that each attractor $A_k$ has a dominated splitting.
Since $f\in\mathcal{J}$, we have that each $A_k$ is a
non-trivial hyperbolic attractor.

Since $f\in \mathcal{R}_7$, we have from Corollary \ref{prove-attractor}
that there is a sequence $x_k\in\sad_d(f)$ such that $A_k=H_f(x_k)$ for every $k\in\mathbb{N}$. 

Since the sequence $A_k$ is infinite, we obtain a contradiction by Theorem \ref{finiteness-attractors} applied to $\Lambda$.
This ends the proof.
\end{proof}

\begin{proof}[Proof of Theorem \ref{thAA'}]
For every surface diffeomorphism $f$ we set
$$
H(f)=\bigcup\{H:H\mbox{ is a hyperbolic dissipative homoclinic class of }f\}.
$$
Define the maps $S_+,S_-: \diff^1(M)\to 2^M_c$ by
$$
S_\pm(f)=\cl(H(f^{\pm 1})).
$$
These maps are clearly lower-semicontinuous, and so,
upper-semicontinuous in a residual subset $\mathcal{C}$.
Define $\mathcal{R}=\mathcal{C}\cap\mathcal{R}_0\cap \mathcal{R}_2$, where $\mathcal{R}_0$ and
$\mathcal{R}_2$
are the residual subsets in lemmas \ref{lema0} and \ref{cm} respectively.
Clearly $\mathcal{R}$ is a residual subset of surface diffeomorphisms.
As before, we can assume that $\mathcal{R}$ is symmetric.

Now, take $f\in \mathcal{R}$.
Then, $f,f^{-1}\in \mathcal{R}_0$ and so $\de(f)\subset \cl(\sink(f))$ and $\de(f^{-1})\subset\cl(\sou(f))$
by Corollary \ref{ladilla1}.
Since $f\in \mathcal{R}_2$, we also have
$H\cap (\cl(\sink(f))\cup\cl(\sou(f)))=\emptyset$ for every hyperbolic homoclinic class $H$
of $f$.

All together yield
$$
\de(f^\pm)\cap H_\pm(f)=\emptyset,
\quad\quad\forall f\in\mathcal{R}.
$$
But we also have $H_\pm(f)\subset\cl(\sad_d(f^{\pm 1}))$
by the Birkhoff-Smale theorem \cite{hk}.
As $\cl(\sad_d(f^{\pm 1}))\subset\sad_d^*(f^{\pm 1})$,
we conclude from Proposition \ref{ladilla2} that both
$S_+(f)$  and $S_-(f)$ have dominated splittings, $\forall f\in \mathcal{R}$.
Now, applying the upper-semicontinuity of $S_\pm$ at $\mathcal{C}$
and the persistence of dominated splittings \cite{bdv} we conclude
that for every $f\in \mathcal{R}$ there is a neighborhood
$\mathcal{V}_f$ of $f$ such that
both $S_+(h)$ and $S_-(h)$ have dominated splittings, for every $h\in\mathcal{V}_f$.

Define
$$
\mathcal{O}=\displaystyle\bigcup_{f\in\mathcal{R}}\mathcal{V}_f
$$
which is clearly open and dense in $\diff^1(M)$.
Define $\mathcal{D}$ as the set of $C^2$ Kupka-Smale diffeomorphisms $g$ in $\mathcal{O}$
for which every homoclinic class of is dissipative for either $g$ or $g^{-1}$.
It follows from the Kupka-Smale theorem \cite{hk} that $\mathcal{D}$ is dense in $\diff^1(M)$.

If $g\in\mathcal{D}$ then both $S_+(g)$ and $S_-(g)$ have dominated splittings.
As clearly every periodic point in these sets are saddles, and there are no
normally hyperbolic irrational circles in $S_+(g)\cup S_-(g)$, we conclude from Theorem B in \cite{PS}
that both $S_+(g)$ and $S_-(g)$ are hyperbolic sets.
Since the number of homoclinic classes in a hyperbolic set is finite,
we conclude that the number of hyperbolic homoclinic classes of $g$ which are dissipative for either $g$ or $g^{-1}$ is finite.
As every homoclinic class of $g\in\mathcal{D}$ is dissipative for either $g$ or $g^{-1}$, we conclude
that the number of hyperbolic homoclinic classes of $g$ is finite, $\forall g\in \mathcal{D}$.
Since $\mathcal{D}$ is dense in $\diff^1(M)$, we are done.
\end{proof}

\begin{proof}[Proof of Theorem \ref{hyp->ess}]
Apply Theorem \ref{ara-theo}.
\end{proof}

\begin{proof}[Proof of Theorem \ref{araujo}]
Let $\mathcal{R}_0$ and $\mathcal{R}_8$ be the residual subsets in Lemma \ref{lema0} and Theorem \ref{ara-theo} respectively.
Define $\mathcal{R}=\mathcal{R}_0\cap \mathcal{R}_8$. Then, $\mathcal{R}$ is a residual subset of surface diffeomorphisms.

Now, take $f\in\mathcal{R}$ orientation-preserving such that $\sink_\mathbb{C}(f)$ is finite.
Then, $\cl(\sink_\mathbb{C}(f))\setminus\sink_\mathbb{C}(f)=\emptyset$.
Since $f\in\mathcal{R}_0$ is orientation-preserving, Corollary \ref{ladilla1} implies $\de(f)=\emptyset$.
As $f\in\mathcal{R}_8$, Theorem \ref{ara-theo} implies that $f$ is essentially hyperbolic.
This ends the proof.
\end{proof}

\begin{proof}[Proof of Corollary \ref{c1}]
Let $\mathcal{R}_0$ and $\mathcal{R}_8$ be the residual subsets in Lemma \ref{lema0}
and Theorem \ref{ara-theo} respectively.
Define $\mathcal{R}=\mathcal{R}_0\cap \mathcal{R}_8$.
Then, $\mathcal{R}$ is a residual subset of surface diffeomorphisms.

Now, take $f\in \mathcal{R}$ and suppose by contradiction that
$\cl(\sink(f))\setminus \sink(f)$ has a dominated splitting. In particular, $\cl(\sink(f))\setminus \sink(f)\neq\emptyset$ and so
$\sink(f)$ is infinite.
Since $f\in\mathcal{R}_0$, Corollary \ref{ladilla1} implies $\de(f)\subset\cl(\sink(f))\setminus\sink(f)$ and so
$\de(f)=\emptyset$ by Proposition \ref{arrasa}.
As $f\in\mathcal{R}_8$, Theorem \ref{ara-theo} implies that $\sink(f)$ is finite, a contradiction.
This contradiction ends the proof.
\end{proof}

\begin{proof}[Proof of Corollary \ref{thA}]
Let $\mathcal{R}_{8}$ be the residual subset in Theorem \ref{ara-theo}.
As before, we can assume that $\mathcal{R}_8$ is symmetric.
By the Ma\~n\'e's dichotomy (Corollary II p.506 in \cite{M}) there is another residual subset $\mathcal{H}$ of surface diffeomorphisms $f$
which satisfy Axiom A if and only if both $\sink(f)$ and $\sou(f)$ are finite.
Define $\mathcal{R}=\mathcal{R}_8\cap \mathcal{H}$. Then, $\mathcal{R}$ is a residual subset of surface diffeomorphisms.

Now, take $f\in \mathcal{R}_8$ and suppose that every homoclinic class of $f$ is hyperbolic.
Then, every dissipative homoclinic class of  $f$ (resp. $f^{-1}$) is hyperbolic.
As $f^{\pm1}\in \mathcal{R}_8$, we conclude from Theorem \ref{ara-theo} that both $\sink(f)$ and $\sou(f)=\sink(f^{-1})$ are finite.
As $f\in \mathcal{H}$, we conclude that $f$ satisfies Axiom A and we are done.
\end{proof}

\section{Proof of propositions \ref{ladilla2} and \ref{ladi1}}
\label{sec-final}

\noindent
In both proofs we shall use the following notation.
Given a periodic point $x$ of a surface diffeomorphism $g$, we denote $\lambda(x,g)$
and $\sigma(x,g)$ the eigenvalues of $x$ with
$$
0<|\lambda(x,g)|\leq |\sigma(x,g)|.
$$

To prove Proposition \ref{ladilla2} we follow the proof of Proposition 3.7 in \cite{w} (which in turns followed Lemma 2.0.1 in \cite{PS}).
Since some parts are slightly different from those in \cite{w}, we include
the full details for the sake of completeness.

We start with a linear algebra assertion extracted from p. 967 of \cite{PS}.

\begin{lemma}
\label{extract}
For every $\alpha>0$ and every $\Delta>0$ there is $0<\beta_0<1$ such that
\begin{equation}
\label{rabano}
\|T-I\|\leq \Delta,
\end{equation}
for every linear map $T: V\to V$ of a two-dimensional inner product space $V$ satisfying $T/E=(1-\beta')I$ and $T/F=(1+\beta'')I$
for some pair of real numbers $\beta',\beta''$ with $|\beta'|,|\beta''|<\beta_0$ and some pair of subspaces $E,F\subset V$
with $\angle(E,F)>\alpha$.
\end{lemma}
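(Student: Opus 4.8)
The plan is to reduce the operator--norm estimate to an elementary computation in the (oblique) basis of $V$ given by unit vectors along $E$ and $F$. First I would note that the hypothesis $\angle(E,F)>\alpha>0$ forces $E$ and $F$ to be two distinct lines (if one of them were $\{0\}$ or all of $V$ the angle would be $0$ or undefined), so that unit vectors $e\in E$ and $f\in F$ form a basis of $V$. The only place transversality is used is the following quantitative non--collinearity of $e$ and $f$: writing $F$ as the graph of the linear map $L:E\to E^{\perp}$ with $\|L\|=\angle(E,F)$, and picking a unit vector $e^{\perp}\in E^{\perp}$, one has $L(e)=se^{\perp}$ with $|s|=\angle(E,F)$ and $f=\pm(e+se^{\perp})/\sqrt{1+s^{2}}$, whence $|\langle e,f\rangle|=(1+s^{2})^{-1/2}\le(1+\alpha^{2})^{-1/2}=:c_{\alpha}<1$. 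This bound, depending only on $\alpha$, is the conceptual core of the lemma.

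Next I would bound, for an arbitrary unit vector $v=ae+bf\in V$, the coefficients $|a|,|b|$. From $1=\|v\|^{2}=a^{2}+b^{2}+2ab\langle e,f\rangle\ge a^{2}+b^{2}-2|ab|c_{\alpha}\ge(1-c_{\alpha})(a^{2}+b^{2})$, using $2|ab|\le a^{2}+b^{2}$, one gets $a^{2}+b^{2}\le(1-c_{\alpha})^{-1}$ and hence $|a|+|b|\le\sqrt{2/(1-c_{\alpha})}=:C_{\alpha}$, again a constant depending only on $\alpha$.

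The estimate then follows immediately: since $Te=(1-\beta')e$ and $Tf=(1+\beta'')f$, we have $(T-I)v=-a\beta'e+b\beta''f$, so $\|(T-I)v\|\le(|a|+|b|)\max\{|\beta'|,|\beta''|\}<C_{\alpha}\beta_{0}$ for every unit $v$, giving $\|T-I\|\le C_{\alpha}\beta_{0}$. It then suffices to choose $\beta_{0}=\min\{1/2,\ \Delta/C_{\alpha}\}$, which is legitimate because $\beta_{0}$ is extracted only after $\alpha$ and $\Delta$ are fixed. There is no real obstacle here beyond bookkeeping: the argument is routine linear algebra, and the one point worth flagging is that $C_{\alpha}\to\infty$ as $\alpha\to0$, which is precisely why the conclusion cannot hold with a single $\beta_{0}$ independent of the angle bound.
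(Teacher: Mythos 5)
Your proof is correct, and it takes a genuinely different route from the paper: the paper's entire proof of Lemma \ref{extract} is a one-line citation of Lemma II.10 of Ma\~{n}\'{e} \cite{M}, the same angle-versus-norm estimate it invokes repeatedly later (e.g. to bound $\|P-I\|$, $\|S-I\|$ and $\|T_j-I\|$ in Section \ref{sec-final}), whereas you give a self-contained linear-algebra argument. You correctly translate the paper's convention $\angle(E,F)=\|L\|$ (the tangent of the usual angle, so $\angle(E,F)>\alpha$ bounds the lines away from being parallel) into the inner-product bound $|\langle e,f\rangle|\le(1+\alpha^2)^{-1/2}=c_\alpha<1$ for unit vectors $e\in E$, $f\in F$; the coefficient estimate $|a|+|b|\le\sqrt{2/(1-c_\alpha)}=C_\alpha$ for a unit vector $v=ae+bf$ and the conclusion $\|T-I\|\le C_\alpha\max\{|\beta'|,|\beta''|\}$ are both right, so $\beta_0=\min\{1/2,\Delta/C_\alpha\}$ works and is chosen, as it must be, only after $\alpha$ and $\Delta$. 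What your route buys is an explicit quantitative dependence of $\beta_0$ on $\alpha$ and $\Delta$, together with the instructive remark that $C_\alpha\to\infty$ as $\alpha\to0$, which explains why the angle hypothesis cannot be dropped; what the citation buys is brevity and uniformity, since all such estimates in the paper are referred to the single statement in \cite{M}. One cosmetic remark: if $F=E^\perp$ the graph representation $f=\pm(e+se^\perp)/\sqrt{1+s^2}$ degenerates (the angle is then conventionally infinite), but in that case $\langle e,f\rangle=0\le c_\alpha$ and your estimate holds trivially, so nothing is lost.
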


\begin{proof}
Apply Lemma II.10 in \cite{M}.
\end{proof}

Next we state a
lemma whose proof
uses the argument in p. 967 of \cite{PS}.

\begin{lemma}
\label{sublemma}
For every surface diffeomorphism $f$ and every $x\notin\de(f)$ there are
$0<\lambda_x<1$ and neighborhoods $W_x$ of $x$ and $\mathcal{H}_x(f)$ of $f$ such that
$$
|\lambda(p,g)|<\lambda_x^{n_{p,g}},
\quad\quad\forall (p,g)\in(\sad_d(g)\cap W_x)\times \mathcal{H}_x(f).
$$
\end{lemma}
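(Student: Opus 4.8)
The plan is to convert the hypothesis $x\notin\de(f)$ into a uniform lower bound on the angle between the invariant subbundles of dissipative saddles near $x$, and then to argue by contradiction: if some dissipative saddle near $x$ had a weak eigenvalue (with modulus $\ge\lambda_x^{\,n}$), then Franks's Lemma together with the linear‑algebra Lemma \ref{extract} would let us perturb it into a dissipative saddle of arbitrarily small angle near $x$, contradicting $x\notin\de(f)$. Concretely, first I would observe that, directly from Definition \ref{critical} and a compactness argument, $x\notin\de(f)$ yields $\alpha>0$, a neighborhood $W_x$ of $x$ and a neighborhood $\mathcal{U}_0$ of $f$ such that
$$
\angle(p,g)>\alpha\qquad\text{for every }g\in\mathcal{U}_0\text{ and every }p\in\sad_d(g)\cap W_x ;
$$
were this false one could pick $g_k\to f$ and $p_k\in\sad_d(g_k)\cap W_x$ with $p_k\to x$ and $\angle(p_k,g_k)\to 0$, i.e. $x\in\de(f)$. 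In particular $\angle(E^{s}_{g^i(p)},E^{u}_{g^i(p)})>\alpha$ at every point $g^i(p)$ of the orbit of such a $p$.

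Next I would fix the quantitative constants in the right order. Put $W(f)=\mathcal{U}_0$ in Franks's Lemma (Lemma \ref{frank}) to obtain $\epsilon_0>0$ and a neighborhood $\mathcal{H}_x(f)\subset\mathcal{U}_0$ of $f$; set $C=\sup\{\|Dg\|:g\in\mathcal{U}_0\}$; feed this $\alpha$ and $\Delta=\epsilon_0/(2C)$ into Lemma \ref{extract} to get $\beta_0\in(0,1)$; and define $\lambda_x=e^{-\beta_0/2}\in(0,1)$, $W_x$ as above. Now suppose, towards a contradiction, that some $(p,g)\in(\sad_d(g)\cap W_x)\times\mathcal{H}_x(f)$ satisfies $|\lambda(p,g)|\ge\lambda_x^{\,n}$ with $n=n_{p,g}$. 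Dissipativity gives $|\lambda(p,g)\,\sigma(p,g)|<1$, hence $1<|\sigma(p,g)|<|\lambda(p,g)|^{-1}\le\lambda_x^{-n}$, so the contraction along $E^{s}$ and the expansion along $E^{u}$ over one period are both sub‑exponential, with per‑period rates trapped between $\lambda_x$ and $\lambda_x^{-1}$.

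The heart of the proof is then the perturbation step, which I would carry out following the computation on p. 967 of \cite{PS} (reproduced in Proposition 3.7 of \cite{w}). Applying Pliss's Lemma (Lemma \ref{pliss}) to $\prod_i\|Dg(g^i(p))/E^{s}_{g^i(p)}\|=|\lambda(p,g)|$, a definite proportion of the orbit consists of times at which the forward contraction along $E^{s}$ is close to $1$; at such times one replaces $Dg(g^i(p))$ by a linear map differing from it by a factor $(1-\beta_i')$ on $E^{s}$ and $(1+\beta_i'')$ on $E^{u}$ with $|\beta_i'|,|\beta_i''|<\beta_0$, so that by Lemma \ref{extract} the modification has norm $\le C\Delta<\epsilon_0$ and Franks's Lemma (already invoked with these constants) produces $\tilde g\in\mathcal{U}_0$ agreeing with $g$ along the orbit of $p$ and having the prescribed derivatives there. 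Organized as in \cite{PS}, the cumulative effect of these near‑identity modifications drives the angle between the stable and unstable directions of the (still periodic and still dissipative) orbit of $p$ below $\alpha$ at some orbit point. Since $\tilde g\in\mathcal{U}_0$ and $p\in\sad_d(\tilde g)\cap W_x$, this contradicts the angle bound of the first step, proving the lemma.

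I expect the main obstacle to be precisely this perturbation step: the bookkeeping that guarantees the angle is genuinely pushed below $\alpha$ while the total $C^1$ size stays under $\epsilon_0$ and dissipativity is preserved. A second, easier point to handle separately is the case of uniformly bounded periods, where the above (which is really an argument about long periods) does not bite; there one instead notes that a limiting derivative $Df^{n}(x)$ with both eigenvalues on the unit circle can, after a further small perturbation supported near $x$, be opened into a dissipative saddle of arbitrarily small angle near $x$, again contradicting $x\notin\de(f)$. Everything else — the use of Franks's Lemma, Pliss's Lemma and Lemma \ref{extract} — enters only through routine estimates.
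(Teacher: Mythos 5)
Your setup matches the paper's: from $x\notin\de(f)$ you extract $\alpha>0$ and neighborhoods $W_x$, $\mathcal{U}_0$ with $\angle(p,g)>\alpha$ for every dissipative saddle $p\in W_x$ of every $g\in\mathcal{U}_0$, you fix the Franks constants $\epsilon_0$, $C$, feed $\Delta=\epsilon_0/(2C)$ into Lemma \ref{extract} to get $\beta_0$, and you note that a counterexample $(p,g)$ with $|\lambda(p,g)|\ge\lambda_x^{n}$ and $p\in\sad_d(g)$ has both per-iterate rates $|\lambda(p,g)|^{1/n}$ and $|\sigma(p,g)|^{1/n}$ within $\beta_0$ of $1$. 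The gap is in the heart of the argument. Modifying $Dg(g^i(p))$ by maps that act as $(1-\beta_i')I$ on $E^{s}_{g^i(p)}$ and $(1+\beta_i'')I$ on $E^{u}_{g^i(p)}$ --- whether at Pliss times or at every point of the orbit --- yields a return map which still leaves $E^s_p$ and $E^u_p$ invariant; as long as the perturbed orbit remains a saddle, its stable and unstable eigendirections are exactly $E^s_p$ and $E^u_p$, so the angle is unchanged and never drops below $\alpha$. No ``cumulative effect'' of such diagonal modifications can move the invariant directions, so the contradiction you aim at is unreachable as described, and Pliss's Lemma (which the paper does not use in this proof) does not supply the missing mechanism.

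What the paper does at this point is a one-shot construction: the diagonal maps $T_i$ are chosen to cancel $\lambda$ and $\sigma$ exactly over one period (each factor lies within $\beta_0$ of $1$ precisely because the eigenvalue is weak and the saddle is dissipative, and $\|T_i-I\|<\epsilon/(2C)$ by Lemma \ref{extract} thanks to the angle bound along the orbit), and then the last map in the product is composed with a prescribed linear map $S$ having eigenvalues $1-\beta$ and $1+\beta$ along two directions $u_0,v_0$ chosen with $\angle(u_0,v_0)\le\alpha$. Franks's Lemma gives $\tilde g\in\mathcal{U}_0$ with $D\tilde g^{\,n}(p)=S$, so $p\in\sad_d(\tilde g)\cap W_x$ while $\angle(p,\tilde g)\le\angle(u_0,v_0)\le\alpha$, contradicting the angle bound. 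This prescribed map $S$ with small-angle eigendirections --- usable only after the hyperbolicity over the period has been neutralized, so that the weak eigenvalues of $S$ determine the splitting --- is the key idea absent from your proposal. Incidentally, your separate treatment of bounded periods is unnecessary: since the hypothesis $|\lambda(p,g)|\ge\lambda_x^{\,n_{p,g}}$ scales with the period, the per-iterate factors are within $\beta_0$ of $1$ for every period, and the construction above applies uniformly.
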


\begin{proof}
Since $x\notin \de(f)$ we can select $\alpha_x>0$ as well as neighborhoods $U_x$
and $\mathcal{W}_x(f)$ of $x$ and $f$ respectively such that
\begin{equation}
\label{ecopata}
\angle(p,g)>\alpha_x,\quad\quad\forall (p,g)\in(\sad_d(g)\cap U_x)\times \mathcal{W}_x(f).
\end{equation}

Put $W(f)=\mathcal{W}_x(f)$ in the Franks's Lemma to obtain the neighborhood $W_0(f)\subset W(f)$
of $f$ and $\epsilon>0$.
Set
$$
C=\sup\{\|Dh(x)\|:h\in W(f),x\in M\}.
$$
Put $\alpha=\alpha_x$ and $\Delta=\frac{\epsilon}{2C}$ in Lemma \ref{extract} to obtain $\beta_0>0$.

Now suppose that the conclusion of the lemma is not true.
Then, there are sequences $\delta_m\to 0^+$, $g_m\to f$ and $p_m\in\sad_d(g_m)$ with period $n_m=n_{p_m,g_m}$
such that
$(1-\delta_m)^{n_m}\leq |\lambda_m|$, for all $m\in\mathbb{N}$,
where $\lambda_m=\lambda(p_m,g_m)$.
Since $p\in \sad_d(g_m)$ we also have
$|\sigma_m|\leq (1-\delta_m)^{-m}$, for all $m\in \mathbb{N}$,
where $\sigma_m=\sigma(p_m,g_m)$.
Summarizing we have
$$
0<(1-\delta_m)^{n_m}\leq |\lambda_m|<1<|\sigma_m|\leq(1-\delta_m)^{-n_m},
\quad\quad\forall m\in\mathbb{N}.
$$
Choose $m$ such that
$$
g_m\in W_0(f)\quad\quad\mbox{ and }\quad\quad\frac{\delta_m}{1-\delta_m}<\beta_0.
$$
For simplicity we write
$\delta=\delta_m$, $g=g_m$ and $p=p_m$ and $n=n_m$.
Furthermore, we assume $0<\lambda<1<\sigma$.
Replacing above we obtain
$$
0<(1-\delta)^n\leq\lambda<1<\sigma\leq(1-\delta)^{-n},\quad g\in W_0(f)
\mbox{ and }\frac{\delta}{1-\delta}<\beta_0.
$$

Define the linear maps $T_i: T_{g^i(p)}M\to T_{g^i(p)}M$, $0\leq 1\leq n-1$ by
$$
T_i/E^{s,g}_{g^i(p)}=\lambda^{\frac{1}{n}}I
\quad\mbox{ and }\quad
T_i/E^{u,g}_{g^i(p)}=\sigma^{\frac{1}{n}}I.
$$
Notice that
$\lambda^{\frac{1}{n}}=1-\beta'$ and $\sigma^{\frac{1}{n}}=1+\beta''$ with
$$
\beta'=1-\lambda^{\frac{1}{n}}\leq \delta<\frac{\delta}{1-\delta}<\beta_0
$$
and
$$
\beta''=\sigma^{\frac{1}{n}}-1\leq (1-\delta)^{-1}-1=\frac{\delta}{1-\delta}<\beta_0.
$$
Moreover, (\ref{ecopata}) implies
$\angle(E^{s,g}_{g^i(p)},E^{u,g}_{g^i(p)})>\alpha_x$ for all $0\leq i\leq n-1$.
Since $\alpha=\alpha_x$ and $\Delta=\frac{\epsilon}{2C}$, (\ref{rabano}) in Lemma \ref{extract} implies
$$
\|T_i-I\|<\frac{\epsilon}{2C},
\quad\forall 0\leq i\leq n-1.
$$

Take two independent vectors $u_0,v_0\in T_pM$ with $\angle(u_0,v_0)\leq \alpha_x$.
Define the linear map $S: T_pM\to T_pM$ by
$$
S(u_0)=(1-\beta)u_0\quad\mbox{ and }\quad S(v_0)=(1+\beta)v_0,
$$
where $0<\beta<1$ is small enough to guarantee
$$
\|S-I\|\cdot\|T_0\|<\frac{\epsilon}{2C}.
$$
Next we define the linear maps $L_i:T_{g^i(p)}M\to T_{g^{i+1}(p)}M$, $0\leq i\leq n-1$ by
$L_i=T_{i+1}\circ Dg(g^i(p))$ (for $0\leq i\leq n-2$) and
$L_{n-1}=S\circ T_0\circ Dg(g^{n-1}(p))$.
It follows from these choices that
\begin{equation}
\label{lula}
\displaystyle\prod_{i=0}^{n-1}L_i=S.
\end{equation}
Furthermore,
$$
\|L_i-Dg(g^i(p))\|\leq\epsilon, \quad\quad\forall 0\leq i\leq n-1.
$$
Indeed,
for $0\leq i\leq n-2$ one has
$$
\|L_i-Dg(g^i(p))\|\leq \|T_{i+1}-I\|C\leq \frac{\epsilon}{2C} C< \epsilon,
$$
and for $i=n-1$,
$$
\|L_{n-1}-Dg(g^{n-1}(p))\|\leq \|S\circ T_0-I\|C\leq
$$
$$
(\|S-I\|\cdot\|T_0\|+\|T_0-I\|)C
\leq
\left(\frac{\epsilon}{2C}+\frac{\epsilon}{2C}\right)C=\epsilon.
$$
Since $g\in W_0(f)$, we can apply the Franks's Lemma to
$x_i=g^i(p)$, $0\leq i\leq \tau-1$, in order to obtain a
diffeomorphism $\tilde{g}\in W(f)$
such that
$\tilde{g}=g$ along the orbit of $p$ under $g$ (thus $p$ is a periodic point of $\tilde{g}$ with $n_{x,\tilde{g}}=n$)
and $D\tilde{g}(\tilde{g}^i(p))=L_i$ for $0\leq i\leq n-1$.
It follows that
$D\tilde{g}^{n_{p,\tilde{g}}}(p)=\displaystyle\prod_{i=0}^{n-1}L_i$.
Then, (\ref{lula}) implies
$$
D\tilde{g}^{n_{p,\tilde{g}}}(p)=S.
$$
From this and the definition of $S$ we obtain
$E^{s,\tilde{g}}_p=<u_0>$, $E^{u,\tilde{g}}_p=<v_0>$, $\lambda(p,\tilde{g})=1-\beta$ and $\sigma(p,\tilde{g})=1+\beta$.
Therefore,
$$
p\in\sad(\tilde{g}),\quad
|\det(D\tilde{g}^{n_{p,\tilde{g}}}(p))|=1-\beta^2<1\quad
\mbox{ and }\quad
\angle(E^{s,\tilde{g}}_p,E^{u,\tilde{g}}_p)=\angle(u_0,v_0).
$$
We have then find $\tilde{g}\in \mathcal{W}_x(f)$ and $p\in \sad_d(\tilde{g})\cap U_x$
such that
$$
\angle(p,\tilde{g})\leq \angle(u_0,v_0)\leq \alpha_x
$$
in contradiction with (\ref{ecopata}).
This ends the proof.
\end{proof}

\begin{proof}[Proof of Proposition \ref{ladilla2}]
To prove that $\sad^*_d(f)\setminus\de(f)$ has a dominated splitting we argue as
in the last paragraph of p. 1455 in \cite{w}.

More precisely, we shall prove that
there is $J_0\in\mathbb{N}$ such that
for any sequences $h_k\to f$ and $p_k\in\sad_d(h_k)$ with $p_k\to x\notin\de(f)$ there is $k_0\in\mathbb{N}$ such that
if $k\geq k_0$ then
$$
\frac{\|Dh^j_k(p_k)/E^{s,h_k}_{p_k}\|}{\|Dh^j_k(p_k)/E^{u,h_k}_{p_k}\|}\leq\frac{1}{2},
\mbox{ for some } 0\leq j\leq J_0.
$$

Assume by contradiction that this is not true.

Then, there are sequences $h_k\to f$, $p_k\in \sad_d(h_k)$, $j_k\to\infty$ and $x\notin\de(f)$ 
such that
$$
p_k\to x\quad\mbox{ and }\quad\frac{\|Dh^j_k(p_k)/E^{s,h_k}_{p_k}\|}{\|Dh^j_k(p_k)/E^{u,h_k}_{p_k}\|}>\frac{1}{2},
\quad\quad\forall k\geq1\mbox{ and }0<j\leq j_k.
$$

Since $x\not\in \de(f)$, we can apply Lemma \ref{sublemma} together with the definition of $\de(f)$
to select $\alpha_x>0$ as well as neighborhoods $U_x$
and $\mathcal{W}_x(f)$ of $x$ and $f$ respectively such that
\begin{equation}
\label{eco}
|\lambda(p,g)|<\lambda^{n_{p,g}}\mbox{ and }\angle(p,g)>\alpha_x,\quad\quad\forall (p,g)\in(\sad_d(g)\cap U_x)\times \mathcal{W}_x(f).
\end{equation}

Using the first inequality in (\ref{eco}) (e.g. p. 1456 in \cite{w}) we can prove that the associated period sequence
$n_{p_k,h_k}$ is unbounded, so, we can assume
$n_{p_k,h_k}\to\infty$.

Put $W(f)=\mathcal{W}_x(f)$ in the Franks's Lemma to obtain
the neighborhood $W_0(f)\subset W(f)$ of $f$ and $\epsilon>0$.
Set
$$
C=\sup\{\|Dh(x)\|:h\in W(f),x\in M\}.
$$

Choose $\epsilon_0>0$, $\epsilon_1>0$ and $m\in\mathbb{N}^+$ satisfying
\begin{equation}
\label{bala}
(2\epsilon_0+\epsilon_0^2)C\leq\epsilon,\quad
(1+\epsilon_1)\lambda<1,
\quad\quad\epsilon_1<\frac{\alpha_x}{1+\alpha_x}\epsilon_0
\end{equation}
and
\begin{equation}
\label{lapa}
\epsilon_1(1+\epsilon_1)^{m}\geq\frac{2}{\alpha_x}+4.
\end{equation}

Since $h_k\to f$, $p_k\to x$, $n_{p_k,h_k}\to\infty$ and $j_k\to\infty$, we can fix $k\in\mathbb{N}$ such that
$$
h_k\in W_0(f),\quad p_k\in U_x,\quad
n_{p_k,h_k}>2m
\quad\mbox{ and }\quad
\frac{\|Dh_k^{m}(p_k)/E^{s,h_k}_{p_k}\|}{\|Dh_k^{m}(p_k)/E^{u,h_k}_{p_k}\|}>\frac{1}{2}.
$$
Writing
$$
h=h_k,\quad p=p_k,\quad E^s_p=E^{s,h_k}_{p_k},\quad E^u_p=E^{u,h_k}_{p_k},\quad \tau=n_{p_k,h_k}
$$
and replacing above we obtain
\begin{equation}
\label{pepa}
h\in W_0(f),\quad p\in U_x,\quad
\tau>2m
\quad\mbox{ and }\quad
\frac{\|Dh^{m}(p)/E^{s}_{p}\|}{\|Dh^{m}(p)/E^{u}_{p}\|}>\frac{1}{2}.
\end{equation}
By the last inequality above we can fix unitary vectors $v\in E^u_p$ and $w\in E^s_p$ such that
\begin{equation}
\label{ll}
\frac{\|Dh^{m}(p)v\|}{\|Dh^{m}(p)w\|}<2.
\end{equation}

Define linear maps $P,S: T_pM\to T_pM$ by
$$
\left\{
\begin{array}{rcl}
P(w)=&w\\
P(v)=&v+\epsilon_1w
\end{array}
\right.
$$
and
$$
\left\{
\begin{array}{rcl}
S(w)=&w\\
S(v)=&v-\bigg(\epsilon_1(1+\epsilon_1)^{\tau-2m-1}\lambda(p,h)\sigma^{-1}(p,h)\bigg)w.
\end{array}
\right.
$$

It follows from (\ref{eco}), the last two inequalities in (\ref{bala}) and Lemma II.10 in \cite{M} that
$$
\|P-I\|\leq \left(\frac{1+\alpha_x}{\alpha_x}\right)\epsilon_1<\epsilon_0
$$
and
$$
\|S-I\|\leq \left(\frac{1+\alpha_x}{\alpha_x}\right)\bigg(\epsilon_1(1+\epsilon_1)^{\tau-2m-1}\lambda(p,h)\sigma^{-1}(p,h)\bigg)<
$$
$$
\left(\frac{1+\alpha_x}{\alpha_x}\right)\epsilon_1
<\epsilon_0.
$$
Therefore,
\begin{equation}
\label{diablodado}
\|P-I\|<\epsilon_0
\quad\quad\mbox{ and }\quad\quad
\|S-I\|<\epsilon_0.
\end{equation}

Take linear maps $T_j: T_{h^j(p)}M\to T_{h^j(p)}M$, $0\leq j\leq \tau-1$,
$$
T_j/E^s_{h^j(p)} = \left\{
\begin{array}{rcl}
(1+\epsilon_1)I,& \mbox{if} & 0\leq j\leq m\\
(1+\epsilon_1)^{-1}I, & \mbox{if} & m+1\leq j\leq \tau-1
\end{array}
\right.
$$
and
$$
T_j/E^u_{h^j(p)}=I,
\quad\quad\forall 0\leq j\leq \tau-1,
$$

Applying (\ref{bala}) and Lemma II.10 in \cite{M} we obtain
$$
\|T_j-I\|\leq \frac{1+\alpha_x}{\alpha_x}\epsilon_1<\epsilon_0, \quad\quad\forall 0\leq j\leq m,
$$
and
$$
\|T_j-I\|\leq \frac{1+\alpha_x}{\alpha_x}\frac{\epsilon_1}{1+\epsilon_1}<\epsilon_0,
\quad\quad \forall m+1\leq j\leq \tau-1.
$$
Therefore,
\begin{equation}
\label{extra}
\|T_j-I\|\leq \epsilon_0, \quad\quad\forall 0\leq j\leq \tau-1.
\quad\quad\forall 0\leq j\leq \tau-1.
\end{equation}

Define the linear maps $L_j:T_{h^j(p)}M\to T_{h^{j+1}(p)}M$, $0\leq j\leq \tau-1$, by
$$
L_j = \left\{
\begin{array}{rcl}
T_1\circ Dh(p)\circ P,& \mbox{if} & j=0\\
T_{j+1}\circ Dh(h^j(p)), & \mbox{if} & 1\leq j\leq \tau-2\\
S\circ T_0\circ Dh(h^{\tau-1}(p)), & \mbox{if} & j=\tau-1.
\end{array}
\right.
$$

We can use the first inequality in (\ref{bala}), (\ref{diablodado}) and (\ref{extra}) as in \cite{PS} to prove
$$
\|L_j-Dh(h^j(p))\|\leq \epsilon,
\quad\quad\forall 0\leq j\leq \tau-1.
$$
Indeed, for $j=0$ one has
$$
\|L_0-Dh(p)\|
\leq \|T_1\circ Dh(p)\circ P-T_1\circ Dh(p)\|+\|T_1\circ Dh(p)-Dh(p)\|\leq
$$
$$
(1+\epsilon_0)C\|P-I\|+\|T_1-I\|C\leq
(2\epsilon_0+\epsilon_0^2)C\leq \epsilon,
$$
for $1\leq j\leq \tau-2$ one has
$$
\|L_j-Dh(h^j(p))\|=\|T_{j+1}\circ Dh(h^j(p))-Dh(h^j(p))\|\leq \|T_j-I\|C\leq \epsilon_0C\leq\epsilon,
$$
and for $j=\tau-1$ one has
$$
\|L_{\tau-1}-Dh(h^{\tau-1}(p))\|\leq(\|S-I\|\cdot\|T_0\|+\|T_0-I\|)C\leq
$$
$$
(\epsilon_0(1+\epsilon_0)+\epsilon_0)C=(2\epsilon_0+\epsilon_0^2)C\leq\epsilon.
$$

Since $h\in W_0(f)$, we can apply the Franks' s Lemma to
$x_i=h^i(p)$, $0\leq i\leq \tau-1$, in order to obtain a
diffeomorphism $g$ with
\begin{equation}
\label{laleli}
 g\in \mathcal{W}_x(f)
\end{equation}
such that
$g=h$ along the orbit of $p$ under $h$ (thus $p$ is a periodic point of $g$ with $n_{x,g}=\tau$)
and $Dg(g^i(p))=L_i$ for $0\leq i\leq \tau-1$.
It follows that
$$
Dg^{n_{p,g}}(p)=\displaystyle\prod_{j=0}^{\tau-1}L_j.
$$
Direct computations together with the definitions of $P$ and $S$ show
$$
Dg^{n_{p,g}}(p)w=(1+\epsilon_1)^{-\tau+2m+1}\lambda(p,h)w
$$
and
\begin{eqnarray*}
Dg^{n_{p,g}}(p)v & = & S\bigg(\sigma(p,h) v+\epsilon_1(1+\epsilon_1)^{\tau-2m-1}\lambda(p,h) w\bigg) \\
& = & \sigma(p,h)S\bigg(v+\epsilon_1(1+\epsilon_1)^{\tau-2m-1}\lambda(p,h)\sigma^{-1}(p,h)w\bigg)\\
& = & \sigma(p,h)\bigg(v-
\epsilon_1(1+\epsilon_1)^{\tau-2m-1}\lambda(p,h)\sigma^{-1}(p,h)w+\\
& &\epsilon_1(1+\epsilon_1)^{\tau-2m-1}\lambda(p,h)\sigma^{-1}(p,h)w
\bigg)\\
& = & \sigma(p,h)v.
\end{eqnarray*}
 
As $|(1+\epsilon_1)^{-\tau+2m+1}\lambda(p,h)|<1$ and $|\sigma(p,h)|>1$ we obtain
\[
E^{s,g}_p=E^s_p, \quad
\lambda(p,g)=(1+\epsilon_1)^{-\tau+2m+1}\lambda(p,h),\quad
E^{u,g}_p=E^u_p\mbox{ and }
\sigma(p,g)=\sigma(p,h).
\]
In particular,
$$
|\det(Dg^{n_{p,g}}(p))|=(1+\epsilon_1)^{-\tau+2m+1}|\lambda(p,h)\sigma(p,h)|\leq
|\det(Dh^{n_p}(p))|<1
$$
proving
\begin{equation}
 \label{pape}
p\in \sad_d(g).
\end{equation}

Next we proceed as in p. 971 of \cite{PS}.

Since $w\in E^{s,g}_p$ and $v\in E^{u,g}_p$ we obtain,
$u_1=Dg^m(p)(v)\in E^{s,g}_{g^m(p)}$ and $u_2=Dg^m(p)(w)\in E^{u,g}_{g^m(p)}$.
But
$$
u_1=Dh^m(p)(v)+\epsilon_1(1+\epsilon_1)^mDh^m(p)(w)\quad\mbox{ and }\quad
u_2=(1+\epsilon_1)^mDh^m(p)(w).
$$
As $\epsilon_1u_2\in E^{s,g}_{g^m(p)}$, Lemma II.10 in \cite{M} yields
$$
\|u_1-\epsilon_1u_2\|\geq\frac{\beta}{1+\beta}\|u_1\|,
\quad\mbox{ where }\quad
\beta=\angle(E^{u,g}_{g^{m}(p)},E^{s,g}_{g^{m}(p)}).
$$
Consequently,
$$
\|Dh^m(p)(v)\|=\|u_1-\epsilon_1u_2\|\geq\frac{\beta}{1+\beta}\|u_1\|
\geq
$$
$$
\frac{\beta}{1+\beta}\left(\epsilon_1(1+\epsilon_1)^m\|Dh^m(p)(w)\|-\|Dh^m(p)(v)\|\right)
\overset{(\ref{ll})}{\geq}
$$
$$
\frac{\beta}{1+\beta}\left(
\frac{\epsilon_1(1+\epsilon_1)^m}{2}-1
\right)\|Dh^m(p)v\|
$$
thus
$$
\frac{1+\beta}{\beta}\geq\frac{\epsilon_1(1+\epsilon_1)^m}{2}-1.
$$
As $\beta=\angle(E^{u,g}_{g^{m}(p)},E^{s,g}_{g^{m}(p)})$ we obtain
$$
\angle(E^{u,g}_{g^{m}(p)},E^{s,g}_{g^{m}(p)})\leq\frac{2}{\epsilon_1(1+\epsilon_1)^m-4}.
$$
Applying (\ref{lapa}) we arrive to $\angle(E^{u,g}_{g^{m}(p)},E^{s,g}_{g^{m}(p)})\leq\alpha_x$ so
$\angle(p,g)\leq\alpha_x$. But now (\ref{pepa}), (\ref{laleli}) and (\ref{pape}) contradict the second inequality in (\ref{eco})
and the proof follows.
\end{proof}

To prove Proposition \ref{ladi1} we will need the following numerical assertion extracted
from p. 975 of \cite{PS}.

\begin{lemma}
\label{numerical}
For every pair of positive numbers $\epsilon'$ and $\epsilon_1$, every pair of sequences of positive numbers $\gamma_n\to0$ and $\sigma_n\geq1$,
and every integer sequence $m_n\to\infty$
there is $n\in\mathbb{N}$ such that if $\tilde{\sigma}_n=(1+\delta_n)^{m_n}\sigma_n$ and $\delta_n=\frac{\gamma_n\epsilon'}{2}$, then
\begin{equation}
\label{pela}
\frac{\tilde{\sigma}_n-1}{\tilde{\sigma}_n+1}\frac{\epsilon_1}{2}>\gamma_n.
\end{equation}
\end{lemma}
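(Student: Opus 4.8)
The plan is to argue by contradiction, using nothing beyond Bernoulli's inequality and the hypothesis $m_n\to\infty$. Assume that (\ref{pela}) fails for every $n$, that is,
$$
\frac{\tilde\sigma_n-1}{\tilde\sigma_n+1}\cdot\frac{\epsilon_1}{2}\leq\gamma_n,\qquad\forall n\in\mathbb{N}.
$$
Since $\gamma_n\to0$, the right-hand side eventually drops below any prescribed constant; as the map $s\mapsto\frac{s-1}{s+1}$ is increasing on $[1,\infty)$ and vanishes at $s=1$, I would first deduce from the displayed inequality that $\tilde\sigma_n\leq3$ for all sufficiently large $n$ (indeed $\tilde\sigma_n\to1$), so that $\tilde\sigma_n+1\leq4$ eventually.

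Next I would produce a linear lower bound for $\tilde\sigma_n-1$. Since $\sigma_n\geq1$ and $\delta_n=\tfrac{\gamma_n\epsilon'}{2}>0$, Bernoulli's inequality gives
$$
\tilde\sigma_n=(1+\delta_n)^{m_n}\sigma_n\geq(1+\delta_n)^{m_n}\geq1+m_n\delta_n=1+\frac{m_n\gamma_n\epsilon'}{2},
$$
hence $\tilde\sigma_n-1\geq\tfrac{m_n\gamma_n\epsilon'}{2}$. Combining this with the bound $\tilde\sigma_n+1\leq4$ from the previous step yields, for all large $n$,
$$
\frac{\tilde\sigma_n-1}{\tilde\sigma_n+1}\cdot\frac{\epsilon_1}{2}\geq\frac{m_n\gamma_n\epsilon'\epsilon_1}{16}.
$$
Feeding this into the assumed failing inequality and cancelling the factor $\gamma_n>0$ gives $m_n\leq\tfrac{16}{\epsilon'\epsilon_1}$ for all large $n$, contradicting $m_n\to\infty$. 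Therefore (\ref{pela}) must hold for at least one $n$, which is exactly the assertion.

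The argument is entirely elementary and I do not foresee any real obstacle; the only two points deserving a little care are the passage from the negation of (\ref{pela}) to the boundedness of the $\tilde\sigma_n$ (needed precisely so that the denominator $\tilde\sigma_n+1$ does not spoil the estimate) and the application of Bernoulli's inequality, which is the single place where the divergence $m_n\to\infty$ enters the proof.
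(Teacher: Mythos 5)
Your proof is correct and rests on the same key estimate as the paper's: Bernoulli's inequality $(1+\delta_n)^{m_n}\geq 1+m_n\delta_n$ combined with a uniform bound on the denominator $\tilde{\sigma}_n+1$, which forces $m_n$ to be bounded and contradicts $m_n\to\infty$. The only difference is organizational: the paper splits into the cases $\tilde{\sigma}_n$ bounded or unbounded, whereas you argue by contradiction and observe that the negated inequality together with $\gamma_n\to0$ already forces $\tilde{\sigma}_n\leq 3$ eventually, thereby absorbing the unbounded case.
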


\begin{proof}
The proof depends on whether the sequence $\tilde{\sigma}_n$ is bounded or not.
If $\tilde{\sigma}_n$ is not bounded, then such an $n$ exists because
$\gamma_n\to0$ and
$\frac{\tilde{\sigma}_n-1}{\tilde{\sigma}_n+1}\frac{\epsilon_1}{2}\to \frac{\epsilon_1}{2}$ (up to passing to a subsequence if necessary).
Then, we can suppose that $\tilde{\sigma}_n$ is bounded, i.e., there is $K>0$ such that
$$
\tilde{\sigma}_n\leq K,
\quad\quad\forall n\in\mathbb{N}.
$$
Since $m_n\to\infty$ we can select $n$ satisfying
\begin{equation}
\label{mn}
\frac{m_n\epsilon'\epsilon_1}{4(K+1)}>1.
\end{equation}
As
\begin{eqnarray*}
\frac{\tilde{\sigma}_n-1}{\tilde{\sigma}_n+1} & = &  \frac{(1+\delta_n)^{m_n}\sigma_n-1}{\tilde{\sigma}_n+1}
\geq
\frac{(1+m_n\delta_n)\sigma_n-1}{\tilde{\sigma}_n+1}\\
& = & \frac{\sigma_n-1}{\tilde{\sigma}_n+1}
+
\frac{m_n\delta_n\sigma_n}{\tilde{\sigma}_n+1}
\geq \frac{m_n\delta_n}{K+1}=\frac{m_n\epsilon'}{2(K+1)}\gamma_n
\end{eqnarray*}
we obtain
$$
\frac{\tilde{\sigma}_n-1}{\tilde{\sigma}_n+1}\frac{\epsilon_1}{2}>
\frac{m_n\epsilon'\epsilon_1}{4(K+1)}\gamma_n\overset{(\ref{mn})}{>}\gamma_n
$$
so $n$ satisfies (\ref{pela}).
\end{proof}

\begin{proof}[Proof of Proposition \ref{ladi1}]
It suffices to prove that for every $x\in \de(f)$, every neighborhood $U$ of $x$
and every neighborhood $\mathcal{U}$ of $f$
there is $g\in\mathcal{U}$
having a dissipative homoclinic tangency in $U$.
For this we proceed as in the proof of Lemma 2.2.2 in \cite{PS}.

Fix neighborhoods $\mathcal{U}_1(f)\subset \mathcal{U}_0(f)\subset \mathcal{U}$ of $f$ and $\epsilon_1>0$ such that every $g$
that is $\epsilon_1$-close to some $\tilde{g}\in \mathcal{U}_1(f)$ belongs to $\mathcal{U}_0(f)$.

Put $W(f)=\mathcal{U}_1(f)$ in the Franks's Lemma to obtain the neighborhood $\mathcal{U}_2(f)\subset \mathcal{U}_1(f)$ of $f$ and $\epsilon>0$.
Set
$$
C=\sup\{\|Dh\|:h\in\mathcal{W}\}\quad\mbox{ and fix }\quad
0<\epsilon'<\frac{\epsilon}{C}.
$$

Now, take $x\in\de(f)$.
Then, there are sequences $g_n\to f$ and $\hat{p}_n\in\sad_d(g_n)$
such that $\hat{p}_n\to x$ and $\gamma_n\to 0$,
where

\begin{equation}
\label{angle1}
\gamma_n=\angle(\hat{p}_n,g_n).
\end{equation}
For all $n\in\mathbb{N}$ we select $p_n$ in the orbit of $\hat{p}_n$ under $g_n$ such that
$\gamma_n=\angle(E^{s,g_n}_{p_n},E^{u,g_n}_{p_n})$.

Since $g_n\to f$ and $\hat{p}_n\to x$ we can assume
$$
g_n\in\mathcal{U}_2(f)\quad\mbox{ and }\quad\hat{p}_n\in U,
\quad\quad\forall n\in\mathbb{N}.
$$

We claim that there are $n\in \mathbb{N}$, and
a surface diffeomorphism $\tilde{g}\in W(f)$
such that
$\tilde{g}=g_n$ along the orbit of $p_n$ under $g_n$ (thus $p_n$ is a periodic point of $\tilde{g}$ with $n_{p_n,\tilde{g}}=n_{p_n,g_n}$)
such that
\begin{equation}
\label{claim}
p_n\in\sad_d(\tilde{g})\quad\quad\mbox{ and }\quad\quad
\angle(E^{s,\tilde{g}}_{p_n},E^{u,\tilde{g}}_{p_n})<
\frac{|\sigma(p_n,\tilde{g})|-1}{|\sigma(p_n,\tilde{g})|+1}\frac{\epsilon_1}{2}
\end{equation}
Indeed, if, for some $n\in\mathbb{N}$,
$$
\gamma_n<\frac{|\sigma(p_n,g_n)|-1}{|\sigma(p_n,g_n)|+1}\frac{\epsilon_1}{2},
$$
we just take $\tilde{g}=g_n$.

Therefore, we can assume
\begin{equation}
\label{papa}
\gamma_n\geq\frac{|\sigma(p_n,g_n)|-1}{|\sigma(p_n,g_n)|+1}\frac{\epsilon_1}{2}, \quad\quad\forall n\in\mathbb{N}.
\end{equation}

To simplify we write $m_n=n_{p_n,g_n}$.

Since $f$ is Kupka-Smale we can assume that $m_n\to\infty$
(otherwise, as $\gamma_n\to0$, the limit point of $p_n$ would be a non-hyperbolic periodic point because of (\ref{papa})).

Since $\gamma_n\to 0$, we can assume
$$
\gamma_n<1,\quad\quad\forall n\in\mathbb{N}.
$$
Putting
$\epsilon'$, $\epsilon_1$, $\gamma_n\to0$ and $m_n$ as above and $\sigma_n=|\sigma(p_n,g_n)|$ in
Lemma \ref{numerical}, we obtain $n$ satisfying (\ref{pela}) where
$$
\tilde{\sigma}_n=(1+\delta_n)^{m_n}\sigma_n\quad\quad\mbox{ and }
\quad\quad\delta_n=\frac{\gamma_n\epsilon'}{2}.
$$

With this $n$ in hand, we define the linear automorphisms
$T_i$ of $T_{g^i_n(p_n)}M$ by
$$
T_i/E^{s,g_n}_{p_n}=(1-\delta_n)I\quad \mbox{ and }\quad T_i/E^{u,g_n}_{p_n}=(1+\delta_n)I,
\quad\forall 0\leq i\leq m_n-1,
$$
where $I$ above is the identity.

On the other hand, applying Lemma II.10 in \cite{M}, the definition of $\delta_n$ and (\ref{angle1}) we obtain
$$
\|T_i-I\|\leq \frac{1}{2}\left(\gamma_n+\frac{\gamma_n}{\angle(E^{s,g_n}_{g^i_n(p_n)},E^{u,g_n}_{g^i_n(p_n)})}\right)\epsilon'
\leq\frac{1}{2}\cdot 2\cdot\epsilon'=\epsilon',
$$
i.e., $\|T_i-I\|\leq \epsilon'$ for all $0\leq i\leq m_n-1$.

We also define the linear maps
$L_i:T_{g^i_n(p_n)}M\to T_{g^{i+1}_n(p_n)}M$ by
$L_i=T_{i+1}\circ Dg_n(g^i_n(p_n))$ (for $0\leq i\leq m_n-2$) and
$L_{m_n-1}=T_0\circ Dg_n(g^{m_n-1}_n(p_n))$.

Since $\|T_i-I\|\leq\epsilon'$ one has
$$
\|L_i-Dg_n(g^i_n(p_n))\|\leq\epsilon,
\quad\quad\forall 0\leq i\leq m_n-1.
$$
Since $g_n\in \mathcal{U}_2(f)$, we can apply the Franks' s Lemma to
$x_i=g_n^i(p_n)$, $0\leq i\leq m_n-1$, in order to obtain a
diffeomorphism $\tilde{g}\in \mathcal{U}_1(f)$
such that
$\tilde{g}=g_n$ along the orbit of $p_n$ under $g_n$ (thus $p_n$ is a periodic point of $\tilde{g}$ with $n_{p_n,\tilde{g}}=m_n$)
and $D\tilde{g}(\tilde{g}^i(p_n))=L_i$ for $0\leq i\leq m_n-1$.
Consequently,
$$
D\tilde{g}^{n_{p_n,\tilde{g}}}(p_n)=\displaystyle\prod_{i=0}^{m_n-1}L_i.
$$
It follows from the choice of $L_i$ and the above identity that
the eigenvalues of $p_n$ as a periodic point of $\tilde{g}$ are
$(1-\delta_n)^{m_n}\lambda(p_n,g_n)$ (less than $1$ in modulus) and
$\tilde{\sigma}_n=(1+\delta_n)^{m_n}\sigma(p_n,g_n)$ (bigger than $1$ in modulus).
Therefore,
$$
|\det(D\tilde{g}^{n_{p_n,\tilde{g}}}(p_n))|=(1-\delta_n^2)^{m_n}|\lambda(p_n,g_n)\sigma(p_n,g_n)|
<1
$$
since $p_n\in\sad_d(g_n)$.
All together yield
$$
p_n\in\sad_d(\tilde{g}).
$$
It follows also that
$\angle(E^{s,\tilde{g}}_{p_n},E^{u,\tilde{g}}_{p_n})=\gamma_n$ so
(\ref{pela}) implies (\ref{claim}) and the claim follows.

Fixing $n$ and $\tilde{g}$ as in the claim,
we can use (\ref{claim}) and Lemma 2.2.2 in \cite{PS}  (as amended in Lemma 4.2 of \cite{w}) to find a surface diffeomorphism
$g$,
$\epsilon_1$ close to $\tilde{g}$, differing from $\tilde{g}$ only on a small ball near but disjoint from the orbit of
$p_n$ under $\tilde{g}$ (which is the same as that under $g_n$)
such that $g$ has a homoclinic tangency associated to $p_n$.
Propagating this tangency along the orbit of $p_n$ we obtain one close to
$\hat{p}_n$.
As $\hat{p}_n\in U$ we obtain that there is $g\in\mathcal{U}$
having a dissipative homoclinic tangency in $U$ and the proof follows.
\end{proof}

\vspace{20pt}

\end{document}